\documentclass[a4paper]{amsart}

\usepackage[labelfont=bf, textfont=normal,
			justification=justified,
			singlelinecheck=false]{caption} 
\usepackage{graphicx} 
\usepackage{hyperref} 
\usepackage{bbm} 
\usepackage[top=2.9cm, bottom=2.9cm,
			inner=2.5cm, outer=2.5cm]{geometry}			
\setlength{\parindent}{0cm} 
\usepackage{parskip} 


\usepackage{enumerate}

\usepackage{pgfplots}
\pgfplotsset{compat=1.16}

\usepackage{subcaption}
\usepackage{pgf,tikz}
\usetikzlibrary{spy, external}
\usepackage{caption}
\usepackage{todonotes}

\usepackage{float}

\newcommand{\triple}{{\vert\kern-0.25ex\vert\kern-0.25ex\vert}}

\usepackage{comment}
\newcommand*\dd{\mathop{}\!\mathrm{d}}
\newcommand{\R}{\mathbb{R}}
\newcommand*{\E}{\mathbb{E}}
\newcommand{\M}{\mathcal{M}}

\newtheorem{theorem}{Theorem}[section]
\newtheorem{proposition}{Proposition}[section]
\newtheorem{lemma}{Lemma}[section]

\newtheorem{remark}{Remark}[section]

\theoremstyle{definition}

\newtheorem{corollary}{Corollary}[section]

\newcommand{\defeq}{\mathrel{:\mkern-0.25mu=}}

\DeclareMathOperator\supp{supp}
\DeclareMathOperator\spann{span}

\newcommand{\Y}{\mathbb{Y}}

\definecolor{crimson2143940}{RGB}{214,39,40}
\definecolor{darkgray176}{RGB}{176,176,176}
\definecolor{darkorange25512714}{RGB}{255,127,14}
\definecolor{forestgreen4416044}{RGB}{44,160,44}
\definecolor{lightgray204}{RGB}{204,204,204}
\definecolor{mediumpurple148103189}{RGB}{148,103,189}
\definecolor{sienna1408675}{RGB}{140,86,75}
\definecolor{steelblue31119180}{RGB}{31,119,180}

\usepackage{amssymb}
\usepackage{xcolor}

\usepackage{color}

\definecolor{OIBlue}{RGB}{0,114,178}
\definecolor{OISkyBlue}{RGB}{86,180,233}
\definecolor{OIGreen}{RGB}{0,158,115}
\definecolor{OIOrange}{RGB}{230,159,0}
\definecolor{OIVermilion}{RGB}{213,94,0}
\definecolor{OIPurple}{RGB}{204,121,167}

\pgfplotsset{
  compat=1.18,
  every axis plot/.append style={line join=round},
}

\tikzset{with marks/.style={mark repeat=4, mark phase=2, mark size=1.5pt}}

\tikzset{with marks long/.style={mark repeat=40, mark phase=20, mark size=1.5pt}}

\tikzset{with marks every/.style={mark repeat=1, mark phase=1, mark size=1.5pt}}

\tikzset{method LogBSDEF/.style={color=OIBlue, very thick, mark=*, mark options={solid}}}
\tikzset{method BSDEF/.style={color=OISkyBlue, very thick, mark=o, mark options={fill=white,solid}}}

\tikzset{method LogDSF/.style={color=OIPurple, very thick, mark=square*, mark options={solid}}}
\tikzset{method DSF/.style={color={OIPurple!70!black}, very thick, mark=square, mark options={fill=white,solid}}}

\tikzset{method PF 1e6/.style={color={OIVermilion!95!black}, thick, solid,  mark=triangle*, mark options={solid}}}
\tikzset{method PF 1e5/.style={color={OIVermilion!80!black}, thick, dashed, mark=triangle*, mark options={solid}}}
\tikzset{method PF 1e4/.style={color={OIVermilion!60!black}, thick, dotted, mark=triangle*, mark options={solid}}}

\tikzset{method EnKF 1e6/.style={color={OIGreen!90!black}, thick, solid,  mark=diamond*, mark options={solid}}}
\tikzset{method EnKF 1e5/.style={color={OIGreen!70!black}, thick, dashed, mark=diamond*, mark options={solid}}}
\tikzset{method EnKF 1e4/.style={color={OIGreen!55!black}, thick, dotted, mark=diamond*, mark options={solid}}}

\tikzset{method EKF/.style={color={OIOrange!90!black}, thick, dashdotted, mark=star, mark options={solid}}}
\tikzset{method Reference/.style={color=black!60, ultra thick, mark=none}}

\tikzset{method KF density/.style={color={OIGreen!55!black}, thick, dashed}}

\tikzset{method PF 1e4 density/.style={color={OIVermilion!60!black}, thick, dotted}}

\tikzset{method EnKF 1e4/.style={color={OIGreen!55!black}, thick, dotted, mark=diamond*, mark options={solid}}}

\tikzset{method LogDSF density/.style={color=OIPurple, thick}}

\tikzset{method LogBSDEF density/.style={color=OIBlue, very thick, dotted}}
\tikzset{method LogBSDEF-LSTM density/.style={color=OISkyBlue, thick, dashdotted}}

\usepackage{mathtools}  
\mathtoolsset{showonlyrefs}

\title[A convergent scheme for the Fokker--Planck equation with deep splitting]{
A convergent scheme for the Bayesian filtering problem based on the Fokker--Planck equation and deep splitting}

\begin{document}
\author[K.~B{\aa}gmark]{Kasper B{\aa}gmark}
\address{Kasper B{\aa}gmark\\
Department of Mathematical Sciences\\
Chalmers University of Technology and University of Gothenburg\\
SE--412 96 Gothenburg\\
Sweden}
\email{bagmark@chalmers.se}

\author[A.~Andersson]{Adam Andersson} 
\address{Adam Andersson\\ 
Department of Mathematical Sciences\\
Chalmers University of Technology and University of Gothenburg\\
S--412 96 Gothenburg, Sweden\\
and Saab AB\\
S--412 76 Gothenburg, Sweden} 
\email{adam.andersson@chalmers.se} 

\author[S.~Larsson]{Stig Larsson}
\address{Stig Larsson\\
Department of Mathematical Sciences\\
Chalmers University of Technology and University of Gothenburg\\
SE--412 96 Gothenburg\\
Sweden}
\email{stig@chalmers.se}

\author[F.~Rydin]{Filip Rydin}
\address{Filip Rydin\\
Department of Electrical Engineering\\
Chalmers University of Technology and University of Gothenburg\\
SE--412 96 Gothenburg\\
Sweden}
\email{filipry@chalmers.se}

\keywords{Filtering problem, Fokker--Planck equation, partial differential equation, numerical analysis, error estimate, convergence order, H\"ormander condition, splitting scheme, deep learning}
\subjclass[2020]{60G25, 60G35, 62F15, 62G07, 62M20, 65C30, 65M75, 68T07}

\begin{abstract}
    A numerical scheme for approximating the nonlinear filtering density is introduced and its convergence rate is established, theoretically under a parabolic H\"{o}rmander condition, and empirically in numerical examples. In a prediction step, between the noisy and partial measurements at discrete times, the scheme approximates the Fokker--Planck equation with a deep splitting scheme, followed by an exact update through Bayes' formula. This results in a classical prediction-update filtering algorithm that operates online for new observation sequences post-training. The algorithm employs a sampling-based Feynman--Kac approach, designed to mitigate the curse of dimensionality. As a corollary we obtain the convergence rate for the approximation of the Fokker--Planck equation alone, disconnected from the filtering problem. The convergence analysis is complemented by a nonlinear $10$-dimensional numerical example demonstrating the robustness of the method.
\end{abstract}

\maketitle

\section{Introduction}
Approximate nonlinear filters often rely on assumptions of unimodality (e.g., Kalman filters \cite{Kalman_Bucy}) or computationally cheap tractable simulations. The latter mainly refers to sequential Monte Carlo methods, also known as particle filters. Despite their effectiveness in various settings and their asymptotic convergence to the true filter, these methods suffer from the curse of dimensionality with respect to the state dimension \cite{Quinn, Rebeschini_2015, snyder2011particle, snyder2015performance}. There is currently a lot of progress within the Bayesian filtering community, e.g., \cite{luk2024learning}, where sampling-based methods have improved substantially \cite{ corenflos2024particlemala, corenflos2024conditioning, finke2023conditional, 
naesseth2019high, schauer2017guided, zhao2024conditional}. However, the curse of dimensionality remains an open problem for the nonlinear case. 

One motivation for developing methods suitable for a high-dimensional setting comes from applications. There exist numerous domains of applications for Bayesian filtering:
target tracking \cite{blackman1999design, cui2005comparison, goodman1997mathematics}, 
finance \cite{Date,zeng2013state},
chemical engineering \cite{Rutzler}, and weather forecasts \cite{Cassola,Duc_Kuroda, galanis2006applications}, to mention a few prominent ones. Many problems are inherently high-dimensional. An extreme example is \cite{apte2008data}, in which the authors discuss the challenge in weather forecasting with a corresponding dimension of $10^7$ that arises from the spatial domain.

To address the challenge of high-dimensional applications, effective methods for Bayesian filtering must be developed to manage the complexity of the state space and accurately estimate the filtering probability distribution. In many settings, it is desirable to obtain the probability density of the filter and not only estimates of mean and covariance, or a finite empirical distribution. We refer to the filtering probability distribution as the probability of observing a hidden state $S_k$ given noisy measurements $y_{0:k}$ up to time $k$. If such a density $p_k$ exists, it satisfies, for a measurable set $C$ in $ \R^d$, the relation
\begin{align} 
\label{introduction: filtering density}
    \mathbb{P}
    (S_k\in C 
    \mid
    y_{0:k}
    )
    =
    \int_C 
    p_k
    (x 
    \mid 
    y_{0:k}
    ) 
    \dd x.
\end{align}
Moreover, it can be shown that this density adheres to an evolution equation: for discrete observations, it satisfies a Fokker--Planck equation \cite{Kloeden_Platen}, and for continuous observations, it satisfies the Kushner--Stratonovich equation \cite{Kushner}. There are approximation techniques that work well in low dimensions to obtain the filtering density through solving these (Stochastic) Partial Differential Equations (PDEs). These classical techniques, such as finite elements \cite{brenner2007mathematical} and finite differences, are real-time efficient only in one dimension and stop to be computationally feasible in state dimension $d\geq 4$, which is insufficient for most applications.

In recent years with the explosion of machine learning in applied mathematics, there has been a lot of work going towards efficient deep learning solutions to PDEs. Direct approaches such as Physics-Informed Neural Networks (PINN) \cite{raissi2019physics}, deep Backward Stochastic Differential Equation (BSDE) methods \cite{andersson2023convergence,E_2017}, deep neural operators \cite{Lu_2021}, deep Ritz method \cite{e2017deep}, deep Picard iterations \cite{han2024deep}, and more, have shown excellent performance in handling higher dimensions to different degrees. Recently, there was success with a score-based PINN applied to a high-dimensional Fokker--Planck equation in \cite{hu2024score}. However, in general, there are challenges with these methods with respect to modeling probability densities. Specifically, because the Fokker--Planck equation models a probability density function associated with Brownian motion and, since in very high dimensions, its values become extremely small, this poses challenges for existing methods in terms of numerical accuracy.

In this paper, we focus on further development of one such approximation technique, called Deep Splitting. It was demonstrated to perform very well for PDEs with strong symmetries in dimensions up to $10\,000$ \cite{Arnulf_PDE}, and has been extended to partial integro-differential equations \cite{frey2022convergence}. For the Zakai equation \cite{Zakai}, different deep splitting methods have been applied for offline filtering, when the solution is approximated for a single observation sequence \cite{Arnulf, Crisan_Lobbe, lobbe2022deep}. In a subsequent work, deep splitting was combined with an energy-based approach on the Zakai equation and generalized to the online setting where it showed promising results for a state dimension up to $d=20$ \cite{bagmark_1}. Here we extend it to a format more interesting for most applications in engineering and science, namely the setting where the underlying process $S$ is continuous
in time and space, but the observations $O$ are discrete in time. This is done by propagating the Fokker--Planck equation for the prediction and by updating the solution with Bayes' formula at every measurement. The code for the implementation is publicly available\footnote{\url{https://github.com/bagmark/deep-density-filtering}}, and in a concurrent work \cite{baagmark2025high} we benchmark the method against other deep density methods and classical methods in a high-dimensional setting.

\subsection{Problem formulation}
Let $B$ be a Brownian motion on some complete probability space, $T>0$ denote a terminal time for the filtering, and $0=t_0<t_1<\dots<t_K=T$ be observation times.
The filtering problem in question deals with time-continuous state and time-discrete observations, where the state and observation models are given by
\begin{align}
\label{introduction: system of equations}
\begin{split}
    \dd
    S_t 
    &=  
    \mu(S_t)
    \,\mathrm{d} t 
    + 
    \sigma(S_t) \,\mathrm{d} B_t,
    \quad
    t\in(0,T],
    \\
    S_0 
    &\sim 
    q_0,
    \\
    O_k
    &\sim 
    g(O_k \mid S_{t_k}) 
    ,\quad
    k=0,\dots,K.  
\end{split}
\end{align}
In this setting we refer to the $\R^d$-valued process $S$, solving a Stochastic Differential Equation (SDE) with drift $\mu$ and diffusion coefficient $\sigma$, as the unknown state process and the $\R^{d'}$-valued variable $O=(O_k)_{k=0}^K$ as the observation process. We stress that \eqref{introduction: system of equations} is a statistical model, only used in a distributional sense, meaning that pathwise values of $S$ and $O$ are irrelevant and therefore notation for the probability space is not introduced. In any sensible statistical procedure, the distribution of $O$, determined by the model \eqref{introduction: system of equations} is approximately that of the data $y$, as otherwise the filter will output nonsense. The model implicitly defines a measurement likelihood $L(y_k,x) \defeq p(O_k = y_k \mid S_{t_k}=x)=g(y_k\mid x)$, and in Section \ref{sec:setting} we establish rigorous regularity conditions for the measurement likelihood $L(y,x)$. The standard measurement model, used in almost all publications on filtering with discrete observations, is the Gaussian model with $g(y_k\mid x)=N(y_k;h(x),Q)$, where $h$ is a measurement function and $Q$ is a covariance matrix. The filtering problem, i.e., the problem of obtaining the filtering density \eqref{introduction: filtering density}, assuming the state and observation model \eqref{introduction: system of equations}, and given data $y$, can be solved by recursively solving a Fokker--Planck equation and by updating using Bayes' formula. To formalize the unnormalized exact filter we initialize $p_0(0,x,y_0)=q_0(x)L(y_0,x)$, and recursively define
\begin{align}
\begin{split}
\label{introduction: Fokker--Planck}
    p_{k}(t,x,y_{0:k}) 
    &= 
    p_{k}(t_{k},x,y_{0:k})
    +
    \int_{t_{k}}^{t} 
    A^* p_{k}(s,x,y_{0:k}) 
    \dd s
    ,\quad
    t\in (t_{k},t_{k+1}], 
    \ k=0,\dots,K-1,
    \\
    p_{k}
    (t_k,x,y_{0:k})
    &=
    p_{k-1}
    (t_k,x,y_{0:k-1})
    L(y_k,x)
    ,\quad
    k=1,\dots,K.
\end{split}
\end{align}
Throughout this paper the filter, $p_k$, is denoted with three arguments $(t,x,y)$, rather than the conventional $(t,x\mid y)$. This should not be confused with the joint distribution over $(t,x,y)$, which we do not address here. The distribution is unnormalized, since the normalizing factor in the Bayes formula has been neglected, but is otherwise a classical Bayesian filtering system of recursive prediction-update steps.
The operator $A^*$, that appears in the Fokker--Planck equation in \eqref{introduction: Fokker--Planck}, is the adjoint of the infinitesimal generator of the diffusion process $S$ in \eqref{introduction: system of equations}. It remains to solve \eqref{introduction: Fokker--Planck} to define $(p_k)_{k=0}^{K-1}$ including a final update at time $t_K$ to define $p_K$. In Section~\ref{Section: Method} we introduce a Feynman--Kac based solution $\widetilde{\pi}$, that approximates the exact filtering density $p$. The approximation scheme, where $N\in\mathbb{N}$ denotes the number of discretization steps between consecutive observation times, is recursively defined by
\begin{align*}
    \begin{split}
        (\widetilde{\pi}_{k,n+1}
        (x,y))_{(x,y)\in \mathbb{R}^d\times \mathrm{supp}(\mathbb{P}_Y)}  
        &=
        \mathop{\mathrm{arg\,min}}_{u\in 
        L^\infty(\mathrm{supp}(\mathbb{P}_Y);
        C(\mathbb{R}^d;\mathbb{R}))}
        \mathbb{E}
        \bigg[ \Big| 
        u(Z_{N-(n+1)},Y_{0:{k}})
        -
        G_b
        \widetilde{\pi}_{k,n}
        (Z_{N-n},Y_{0:k})
        \Big|^2 \bigg],
        \\
        &\hspace{4em} 
        k=0,\dots,K-1,
        \
        n=0,\dots,N-1,
        \\
        \widetilde{\pi}_{0,0}(x,y_{0}) 
        &= 
        q_0(x)L(y_0,x),
        \\
        \widetilde{\pi}_{k,0}(x,y_{0:k}) 
        &=
        \widetilde{\pi}_{k-1,N}(x,y_{0:k-1})
        L(y_k,x),
        \quad 
        k=1,\dots,K.
    \end{split}    
\end{align*}
Here $Z$ is the Euler--Maruyama approximation of an auxiliary process $X$. The process $X$ satisfies an SDE with drift $b$ and diffusion $\sigma$, driven by a Brownian motion $W$ on $(\Omega,\mathcal A,\mathbb{P})$. The optimization problem is posed over $L^\infty(\supp(\mathbb P_Y);C(\R^d;\R))$, i.e., the space of (essentially) bounded functions on the support of the probability distribution $\mathbb P_Y$ with values in $C(\R^d;\R)$. 
Here $C(\R^d;\R)$ denotes the space of continuous functions $\R^d\to\R$. The operator $G_b$ is a first order differential operator. By solving the optimization problem and finding $\widetilde{\pi}_{k}$ for all $k=0,\dots,K$, one has obtained a machine for approximating the unnormalized filtering density (and prediction density) for all possible measurement sequences $y\in \supp(\mathbb{P}_Y)\subset \Y \defeq \R^{d'\times (K+1)}$ with probability distribution $\mathbb{P}_Y$. This makes inference in the online setting very efficient.

The main contribution of this paper is the introduction of this new scheme and its corresponding error analysis. We derive a strong convergence rate in $L^\infty(\Y;L^\infty(\R^d;\R))$ of order $1$ in time, assuming a parabolic Hörmander condition as in \cite{cattiaux2002hypoelliptic}. The convergence rate is independent of the distribution of the data $y$. Furthermore, we demonstrate the method on a nonlinear $10$-dimensional example to complement the convergence study.

This paper is organized as follows: In Section~\ref{Section: Method} we present the notation and setting that we use and derive the approximations. In this section we also show regularity and uniqueness of the solution to \eqref{introduction: Fokker--Planck}, and of its approximations, and state the main theorem on strong convergence. Section~\ref{section: error analysis} contains the proof of the strong convergence and some auxiliary lemmas that we need.  Finally, Section~\ref{section: numerical experiments} details the additional approximation steps needed to obtain a tractable method based on neural networks and Monte Carlo simulation, and presents the numerical experiments.

\section{The online deep splitting filter and main results}
\label{Section: Method}

\subsection{Notation}
We denote by $\langle x,y \rangle$ and $\|z\|$ the inner product and norm in the Euclidean space $\mathbb{R}^d$ if $x,y,z\in \R^d$ and the Frobenius norm if $z\in \R^{d\times d}$. 
The space of functions in $[0,T]\times\mathbb{R}^{d}\to\mathbb{R}$, which are $k$ times continuously differentiable in the first variable and $n$ times continuously differentiable in the second variable with no cross derivatives between the variables, is denoted $C^{k,n}([0,T]\times \mathbb{R}^d; \mathbb{R})$. Furthermore, functions in the space $C_{\mathrm{b}}^{k,n}([0,T]\times \mathbb{R}^d; \mathbb{R})$ have bounded derivatives. The space $C_{\mathrm{p}}^k(\R^d;\R)$ consists of functions in $C^k(\R^d;\R)$ which, together with their derivatives, are of most polynomial growth. 
Similarly, functions in the space $C_{0}^k(\R^d;\R)\subset C^k(\R^d;\R)$ are such that they and their derivatives tend to $0$ at infinity. For $n$ or $k=\infty$, we adopt the usual modifications.
Let $(A,\mathcal{B},\nu)$ be a measure space and $U$ be a Banach space. By $\mathcal{L}^0(A;U)$, we denote the space of strongly measurable functions $f\colon A\to U$ and by $L^0(A;U)$ the equivalence classes of functions in $\mathcal{L}^0(A;U)$ that are equal $\nu$-almost everywhere. The Bochner spaces $L^p(A;U)\subset L^0(A;U)$, $p\in[1,\infty]$, are defined by
\begin{align*}
    \|
    f
    \|_{L^p(A;U)}
    &\defeq
    \Big(
    \int_A
    \|
    f(x)
    \|_U^p
    \dd 
    \nu(x)
    \Big)^{\frac{1}{p}}
    <
    \infty,
    \quad
    p\in[1,\infty),
    \\
    \|
    f
    \|_{L^\infty(A;U)}
    &\defeq
    \sup_{x\in A}
    \|
    f(x)
    \|_U
    <
    \infty.
\end{align*}
Here and throughout the paper, we write $\sup$ to mean the essential supremum. For Banach spaces $U$ and $V$, we denote by $\mathcal{L}(U;V)$ the space of bounded linear operators that map from $U$ to $V$. 

For smooth vector fields $V,W\in C^1(\R^d;\R^d)$ we define the Lie bracket of $V$ and $W$ by
\begin{align*}
    \big[V,W\big](x)
    =
    \mathrm{D}W(x)V(x)
    -
    \mathrm{D}V(x)W(x)
    ,\quad
    x\in\R^d
    .
\end{align*}
Here $\mathrm{D}V(x)$ is the Jacobi matrix of $V$ with respect to $x\in\R^d$. We say that vector fields $V_0,\dots,V_n \in C_{\mathrm{b}}^\infty(\R^d;\R^d)$ satisfy a parabolic Hörmander condition if, for all $x\in\R^d$, we have 
\begin{align}
\begin{split} \label{eq: parabolic hörmander}
    &\spann \Big\{
    V_{j_0}(x),
    \big[V_{j_1}(x)
    ,V_{j_2}(x)\big],
    \Big[\big[V_{j_1}(x)
    ,V_{j_2}(x)\big]
    ,V_{j_3}(x)\Big],
    \dots ;
    \\
    &\hspace{12em}
    j_0\in \{1,\dots,n\},\,
    j_i \in \{0,\dots,n\}\text{ for all } 
    i=1,\dots,n
    \Big\}
    =
    \R^d.
\end{split}
\end{align}
To improve clarity, we suppress $x\in\R^d$ in equations throughout the paper whenever appropriate.
We define the index set $\mathcal{I}_{K,N} \defeq \{0,\dots,K-1\}\times \{0,\dots,N\} \cup \{K\}\times\{0\}$, for $K,N\in \mathbb{Z}_+$.
Finally, if $R$ is a stochastic process, then we let $R^{t,x}$ denote the conditioned process $R$ that starts in $x$ at time $t$, so that $R_t^{t,x}=x$.

\subsection{Setting}
\label{sec:setting}
Throughout the rest of Sections~\ref{Section: Method} and \ref{section: error analysis} we assume that the following holds.
Let $T>0$, and $d,d',K\geq1$ be integers. We consider $K+1$ uniformly distributed observation times, denoted $(t_k)_{k=0}^K$, satisfying 
\begin{align}
    0=t_0<t_1<\dots<t_{K-1}<t_K=T,
\end{align}
with $t_{k+1}-t_k=\frac{T}{K}$ for  $k=0,\dots,K-1$. Moreover, for numerical approximations we use a family of finer uniform grids given by
\begin{align}
    t_{k}
    =
    t_{k,0} 
    < 
    t_{k,1} 
    < 
    \dots 
    <
    t_{k,N-1}
    <
    t_{k,N}
    =
    t_{k+1},
    \quad
    k=0,\dots,K-1,
\end{align}
with $t_{k,n+1}-t_{k,n}=\frac{T}{KN}$ for  $k=0,\dots,K-1$, $n=0,\dots,N-1$. 
In our error analysis, we write $\tau \defeq \frac{T}{KN}$, where thus $K$ is fixed while $N$ tends to infinity.

The underlying state process $(S_t)_{t\in[0,T]}$ is $\R^d$-valued and, for convenience, we define $\mathbb{Y} \defeq \R^{d'\times (K+1)}$ to denote the space of data measurements. We denote by $y_{k:n}$, $0\leq k\leq n$, the $d'\times (n-k+1)$-matrix $(y_{k},y_{k+1},\dots,y_{n})$.

Throughout the paper, we use a filtered probability space $(\Omega,\mathcal A, (\mathcal{F}_t)_{t\in[0,T]}, \mathbb P)$ equipped with a Brownian motion $W$ adapted to $(\mathcal{F}_t)_{t\in[0,T]}$.

The assumptions on the functions in the statistical model \eqref{introduction: system of equations} are listed next.
\begin{enumerate}[\hspace{-0.5em}(i)\hspace{0.5em}]
    \item \label{assumption: coefficient bound and lipschitz}
    The coefficients $\mu$ and $\sigma$, initial density $q_0$, and measurement likelihood $L$ are bounded, infinitely smooth and with bounded derivatives, i.e., $\mu\in C_{\mathrm{b}}^{\infty}(\R^d;\R^d)$, $\sigma\in C_{\mathrm{b}}^\infty(\R^d;\R^{d\times d})$, $q_0\in C_{\mathrm{b}}^\infty(\R^d;\R)$, $L\in C_{\mathrm{b}}^\infty(\R^{d'}\times \R^d;\R)$.   

    \item \label{assumption: Hörmander}
    The coefficients $\mu$ and $\sigma=[\sigma_1, \dots, \sigma_d]$, where $\sigma_i$ denotes the $i$'th column of $\sigma$, satisfy the parabolic Hörmander condition, i.e., the vector fields $V_0,\dots,V_d$ defined by
    \begin{align*}
            V_i
            =
            \sigma_i
            ,\quad
            \text{for }
            i=1,\dots,d,
            \qquad
            V_0
            =
            \mu
            +
            \frac{1}{2}
            \sum_{j=1}^d
            \mathrm{D}V_j
            V_j,
    \end{align*}
    satisfy \eqref{eq: parabolic hörmander} for all $x\in\R^d$.
\end{enumerate}

We remark that it is usual to assume that the coefficients $\mu$ and $\sigma$ have bounded derivatives and at most linear growth.  This is sufficient for most of this paper also, but the proof of our convergence result seems to require that the coefficients are bounded.

\subsection{Solution to the filtering problem and deep splitting approximations}
The filtering method considered in this paper is a version of the method in \cite{bagmark_1, Arnulf} but now adapted to the case of discrete observations. More precisely, we model the state, denoted $S$, with an SDE and the observation process, denoted $O$, with discrete random variables coupled to discrete time points of $S$, see \eqref{introduction: system of equations}. Under the conditions of Section~\ref{sec:setting}, it is well known from the literature that the SDE in \eqref{introduction: system of equations} has a unique solution $S$. That the observation process $O$ is well defined is clear. We remind the reader that the problem under consideration is that of finding the conditional probability distribution of $S_{t}$ given the measurements $y_{0:k}$, up to and including time $t$. More precisely, we are for $y\in\mathbb{Y}$ interested in the unnormalized conditional density $p(t,x\mid y_{0:k})$, $t\in[t_k,t_{k+1})$, satisfying for all measurable sets $C$ in $\R^d$ the relation
\begin{align*}
    \mathbb{P}
    (S_t\in C 
    \mid
    y_{0:k}
    )
    =
    \frac{
    \int_C 
    p(t,
    x 
    \mid 
    y_{0:k}
    ) 
    \dd x
    }{
    \int_{\R^d} 
    p(t,
    x 
    \mid 
    y_{0:k}
    ) 
    \dd x
    }.
\end{align*}
To this end, we introduce the Fokker--Planck equation. We recall that the state process $S$ satisfying \eqref{introduction: system of equations} has an associated infinitesimal generator $A$. This operator and its formal adjoint $A^*$, are defined, with $a\defeq \sigma \sigma^{\top}$, for $\varphi\in C^{\infty}_0(\mathbb{R}^d;\mathbb{R})$, as
\begin{align*}
    A\varphi 
    = 
    \frac{1}{2}\sum_{i,j=1}^d a_{ij}\,
    \frac{\partial^2 \varphi}{\partial x_i \partial x_j} 
    + \sum_{i=1}^d \mu_i \, 
    \frac{\partial \varphi}{\partial x_i}
    \qquad 
    \text{and} 
    \qquad
    A^* \varphi 
    = 
    \frac{1}{2}\sum_{i,j=1}^d 
    \frac{\partial^2}{\partial x_i \partial x_j} 
    (a_{ij}\varphi) 
    - \sum_{i=1}^d 
    \frac{\partial}{\partial x_i} 
    (\mu_i \varphi).
\end{align*}
Similarly, by letting $b$ define a new drift function, we obtain an alternative generator $A_b$ given by
\begin{align}
    A_b
    \varphi 
    &= 
    \frac{1}{2}\sum_{i,j=1}^d a_{ij}\,
    \frac{\partial^2 \varphi}{\partial x_i \partial x_j} 
    +
    \sum_{i=1}^d 
    b_i \, 
    \frac{\partial \varphi}{\partial x_i}
    .
\end{align}
To connect with the framework of the deep splitting method \cite{bagmark_1, Arnulf,Arnulf_PDE}, we write $A^*=A_b+F_b$, where the first order differential operator $F_b$ is, for $\varphi \in C^{1}(\R^d;\R)$, defined by
\begin{align}
\begin{split}
    F_b\varphi 
    =&\, 
    \sum_{i=1}^d 
    \bigg(
    \sum_{j=1}^d
    \frac
    {\partial a_{ij}}
    {\partial x_j }
    \bigg)
    \frac
    {\partial \varphi}
    {\partial x_i }
    + 
    \frac{1}{2}
    \sum_{i,j=1}^d 
    \frac
    {\partial^2 a_{ij}}
    {\partial x_i \partial x_j}
    \,  
    \varphi 
    -
    \sum_{i=1}^d 
    \frac
    {\partial \mu_i}
    {\partial x_i}
    \, 
    \varphi
    \,
    -
    \sum_{i=1}^d 
    \mu_i \, 
    \frac{\partial \varphi}{\partial x_i}
    -
    \sum_{i=1}^d 
    b_i \, 
    \frac{\partial \varphi }{\partial x_i}
    .
\end{split}
\end{align}
We remark that in the original derivation of the deep splitting method, the drift function $b$ was defined as $b = \mu$, implying $A_b = A$.
In our extended framework we allow for different choices satisfying $b\in C_{\mathrm{b}}^\infty$, and in Section~\ref{section: numerical discussion} we discuss numerical impacts of this choice. One choice is
\begin{align}
\label{eq: mu-bar choice}
    b_i
    &=
    -
    \mu_i
    +
    \sum_{j=1}^d
    \frac{\partial a_{ij}}
    {\partial x_j }
    ,
    \quad
    i=1,\dots,d,
\end{align}
for which the first order terms in $F_b$ disappear. This choice will facilitate our error analysis in Section~\ref{section: error analysis}.
This is one reason why we assume that the coefficients are bounded; in fact, $b$ as in \eqref{eq: mu-bar choice} does not have bounded derivatives unless $\sigma$ is bounded together with its derivatives.

The recursion \eqref{introduction: Fokker--Planck}, that defines $(p_k)_{k=0}^K$, then becomes 
\begin{align}
\begin{split}
\label{eq: global Fokker--Planck with update}
    p_{k}(t,x,y_{0:k}) 
    &= 
    p_{k}(t_{k},x,y_{0:k})
    +
    \int_{t_{k}}^{t} 
    A_b p_{k}(s,x,y_{0:k}) 
    \dd s
    +
    \int_{t_{k}}^{t}
    F_b
    p_{k}(s,x,y_{0:k})
    \dd s
    ,\quad
    t\in (t_{k},t_{k+1}], 
    \\
    & \qquad
    k=0,\dots,K-1,
    \\
    p_{0}
    (0,x,y_{0})
    &=
    q_0(x)L(y_0,x),\\
    p_{k}
    (t_k,x,y_{0:k})
    &=
    p_{k-1}
    (t_k,x,y_{0:k-1})
    L(y_k,x)
    ,\quad
    k=1,\dots,K.
\end{split}
\end{align}
The solution $(p_{k})_{k=0}^K$, obtained by solving \eqref{eq: global Fokker--Planck with update} for every path $y_{0:K}\in \Y$, involves both prediction, when $t\in (t_k,t_{k+1}]$, and filtering, when $t=t_k$. We remark that, since $A_b + F_b = A^*$, the solution of \eqref{eq: global Fokker--Planck with update} does not depend on the choice of $b$. The following proposition summarizes the regularity of the solution.
\begin{proposition}
    \label{prop:properties of p}
    There exists a unique $p_{k}\in L^\infty(\Y;C([t_k,t_{k+1}]\times\R^d,\R))$, $k=0,\dots,K$, satisfying \eqref{eq: global Fokker--Planck with update}. Moreover, $p_{k}(y)\in C_{\mathrm b}^{1,\infty}([t_k,t_{k+1}]\times \R^d; \R)$ for all $k=0,\dots,K$, $y\in\Y$.
\end{proposition}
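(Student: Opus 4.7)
The plan is to proceed by induction on $k$, solving the Cauchy problem on each interval $[t_k,t_{k+1}]$ via a Feynman--Kac representation, and propagating the regularity of the initial datum through the updates.

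\textbf{Induction setup.} For the base case $k=0$, the initial datum $p_0(0,\cdot,y_0) = q_0(\cdot)\, L(y_0,\cdot)$ lies in $C_{\mathrm b}^\infty(\R^d;\R)$ with every derivative bounded uniformly in $y_0$, by assumption (i). For the inductive step, if $p_{k-1} \in L^\infty(\Y; C_{\mathrm b}^{1,\infty}([t_{k-1},t_k]\times\R^d;\R))$, then the updated initial datum $f_k(\cdot,y_{0:k}) \defeq p_{k-1}(t_k,\cdot,y_{0:k-1})\,L(y_k,\cdot)$ lies in $C_{\mathrm b}^\infty(\R^d;\R)$ uniformly in $y_{0:k}$ by the Leibniz rule and the $C_{\mathrm b}^\infty$-bounds on $L$.

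\textbf{The Fokker--Planck step.} Expanding $A^* = A + F$ as in the excerpt, $A^*$ has the form $\frac12 \sum_{i,j} a_{ij}\partial_i\partial_j + \tilde\mu\cdot\nabla + \tilde c$ with $\tilde\mu, \tilde c \in C_{\mathrm b}^\infty(\R^d)$. Introducing the auxiliary SDE $\dd \tilde X_t = \tilde\mu(\tilde X_t)\dd t + \sigma(\tilde X_t)\dd W_t$ and observing that its generator is $\tilde A_0 \defeq A^* - \tilde c$, the Feynman--Kac formula provides the candidate solution
\begin{equation*}
p_k(t,x,y_{0:k}) = \E\Bigl[f_k(\tilde X_{t-t_k}^{0,x},y_{0:k})\, \exp\Bigl(\int_0^{t-t_k} \tilde c(\tilde X_s^{0,x})\dd s\Bigr)\Bigr].
\end{equation*}
Standard smoothness-of-flow results for SDEs with $C_{\mathrm b}^\infty$ coefficients give that $x \mapsto \tilde X_\tau^{0,x}$ is smooth with every spatial derivative enjoying finite moments of all orders, uniformly on $\tau \in [0,T/K]$. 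Differentiating under the expectation then yields arbitrary spatial smoothness of $p_k$ with bounds uniform in $y_{0:k}$, and reinserting into the PDE upgrades this to $C_{\mathrm b}^{1,\infty}$.

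\textbf{Uniqueness and main obstacle.} Uniqueness on each interval follows from the representation: for any classical $C_{\mathrm b}^{1,\infty}$ solution $p$, applying It\^o's formula to the process $s \mapsto p(t-s,\tilde X_s^{0,x},y_{0:k})\exp\!\bigl(\int_0^s \tilde c(\tilde X_r)\dd r\bigr)$ yields a bounded martingale whose endpoints compare $p(t,x,y_{0:k})$ to the Feynman--Kac expression, forcing equality. The main technical concern is propagating the $y$-uniformity through the $K$ iterations: the Feynman--Kac step bounds each $C_{\mathrm b}^n$-norm of $p_k$ by a constant (depending only on $T/K$, the $C_{\mathrm b}^\infty$-bounds on $\sigma$, $\tilde\mu$, $\tilde c$, and moments of the flow derivatives) times $\|f_k(\cdot,y_{0:k})\|_{C_{\mathrm b}^n}$, while the update step bounds $\|f_k(\cdot,y_{0:k})\|_{C_{\mathrm b}^n}$ by $C_n\,\|L\|_{C_{\mathrm b}^n}\,\|p_{k-1}(t_k,\cdot,y_{0:k-1})\|_{C_{\mathrm b}^n}$. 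Iterating $K$ times gives finite constants independent of $y \in \Y$, which is exactly the claimed $L^\infty(\Y;\cdot)$ regularity.
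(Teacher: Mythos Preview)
Your argument is correct and takes a genuinely different route from the paper's proof. The paper invokes the parabolic H\"ormander condition (assumption~(ii)) via a hypoellipticity result of Cattiaux to conclude that $p_0(t,y_0)\in C_{\mathrm b}^\infty(\R^d;\R)$, and then uses the adjoint semigroup identity $A^*R(t)\psi=R(t)A^*\psi$ from~\cite{da2014introduction} to obtain time differentiability; uniqueness is likewise delegated to~\cite{da2014introduction}. You instead absorb the zeroth-order term of $A^*$ into a potential, represent the solution by a Feynman--Kac formula for an auxiliary diffusion $\tilde X$, and extract spatial $C_{\mathrm b}^\infty$-regularity from the smoothness of the stochastic flow $x\mapsto\tilde X_\tau^{0,x}$ (moment bounds on the variation processes, which follow from linear SDEs with bounded coefficients and are uniform in $x$). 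Uniqueness comes from the usual It\^o martingale computation.

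The upshot is that your proof shows Proposition~\ref{prop:properties of p} does \emph{not} actually require the H\"ormander condition: assumption~(i) alone suffices, because the initial datum is already $C_{\mathrm b}^\infty$ and Kunita-type flow regularity preserves this. The H\"ormander condition is genuinely needed elsewhere in the paper (the Malliavin integration-by-parts in Lemma~\ref{lemma: new Malliavin gradient lemma}), but not here. Your approach is also more self-contained, avoiding the external references to hypoellipticity and semigroup theory; the paper's route, on the other hand, stays closer to the semigroup language used in the time-regularity step and may be viewed as more natural if one is already committed to the H\"ormander setting.
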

\begin{proof}
    We fix $y\in\Y$ and begin by noting that if $p_{k-1}(t_{k},y_{0:k-1})\in C_{\mathrm{b}}^\infty
    (\R^d;\R)$, 
    then $p_{k}(t_k,y_{0:k})= p_{k-1}(t_{k},y_{0:k-1})L(y_{k})\in C_{\mathrm{b}}^\infty
    (\R^d;\R)$, since $C_{\mathrm{b}}^\infty(\R^d;\R)$ is an algebra. In particular, this holds for $p_{0}(0,y_0)=q_0L(y_0)$. It remains to prove that the Fokker--Planck equation is regularity preserving and that $p_k$ is continuously differentiable in time. For this purpose it is enough to consider one step and to prove that $p_{0}(y_0)\in C_{\mathrm{b}}^{1,\infty}
    ([0,t_1]\times\R^d;\R)$. 
    The conditions of \cite[Theorem 4.3]{cattiaux2002hypoelliptic} are satisfied under \eqref{assumption: coefficient bound and lipschitz} and \eqref{assumption: Hörmander} of Section~\ref{sec:setting}, which tells us that the probability density of $S$ is infinitely smooth and bounded. More precisely, for all $t\in[0,t_1]$ we have $p_{0}(t,y_0)\in C_{\mathrm{b}}^\infty(\R^d;\R)$.
    It remains to show the time regularity. 
    In \cite[Chapter 9]{da2014introduction} the semigroup $(P(t))_{t\ge0}$ of bounded linear operators on $C_{\mathrm{b}}(\R^d;\R)$ is defined as the solution operator to the Kolmogorov backward equation. 
    Likewise, one can define the adjoint semigroup $R = P^*$ on $C_{\mathrm{b}}(\R^d;\R)$, associated with the forward equation \eqref{eq: global Fokker--Planck with update}. 
    From this we have for $t\in[0,t_1]$ that $p_0(t,y_0)=R(t)\phi_0$, where $\phi_0:=p_0(0,y_0)\in C_{\mathrm b}^\infty(\R^d;\R)$. 
    Following \cite[Proposition 9.9]{da2014introduction} one can analogously show that for all $\psi\in C_{\mathrm b}^\infty(\R^d;\R)$ it holds $A^*R(t)\psi=R(t)A^*\psi$. 
    In particular, this holds for $\psi=\phi_0$ and therefore
    \begin{align} \label{proof eq: fokker planck diff}
        \frac{\mathrm d}{\mathrm{d} t}
        p_0(t,y_0)
        =
        \frac{\mathrm d}{\mathrm{d} t}
        R(t)
        \phi_0
        &=
        A^*
        R(t)
        \phi_0
        =
        R(t)
        A^*
        \phi_0,
        \quad
        t\in[0,t_{1}].
    \end{align}
    From the regularity of $\phi_0$ we have that $A^*\phi_0\in C_{\mathrm b}^\infty(\R^d;\R)$ and thus $\frac{\mathrm d}{\mathrm{d} t} p_0(t,y_0)\in C_{\mathrm b}^\infty(\R^d;\R)$ for all $t\in[0,t_1]$.
    This shows that $p_{0}(y_0)\in C_{\mathrm{b}}^{1,\infty}
    ([0,t_1]\times\R^d;\R)$.
    Lastly, the uniqueness of $p_0(y_0)$ follows analogously to \cite[Theorem 9.11]{da2014introduction} and uniqueness of $p_0\in L^\infty(\Y;C([0,t_1]\times\R^d,\R)$ follows since $L$ is bounded and thus so is $\phi_0=L(y)q_0$ over $\Y$.  
\end{proof}
\begin{remark}
The solution to \eqref{eq: global Fokker--Planck with update} gives unnormalized densities. Instead, in \cite{challa2000nonlinear, demissie2016nonlinear} the normalized update step is used, replacing the third row of \eqref{eq: global Fokker--Planck with update} with
\begin{align}
    p_{k}
    (t_k,x,y_{0:k})
    &=
    \frac{
    p_{k-1}
    (t_k,x,y_{0:k-1})
    L(y_k,x)
    }
    {
    \int_{\R^d}
    p_{k-1}
    (t_k,z,y_{0:k-1})
    L(y_k,z)
    \dd z
    }.
\end{align}
The benefit of directly obtaining a normalized density has to be compared to the additional computational cost of evaluating or approximating this integral. In this work, we consider the unnormalized version of the filtering density, but we note that the same methodology holds for the normalized version. This should be seen as a generalization allowing a more flexible framework where normalization is not necessarily required. In practice, as we discuss in Section~\ref{section: numerical discussion} we perform an approximative normalization for numerical stability.
\end{remark}
The scene is now set for applying the deep splitting methodology between each of the measurement updates. However, the derivation in this paper is done in a slightly different way that avoids the explicit splitting equations seen in \cite{bagmark_1,Arnulf_PDE}. This leads to the same approximation scheme in the end, but is beneficial for our error analysis. To prepare for Feynman--Kac representations, used in the the deep splitting scheme and the error analysis, we introduce $X\colon[0,T]\times \Omega \to \mathbb{R}^d$, that for all $t\in [0,T]$, $\mathbb{P}$-a.s., satisfies
\begin{align} 
\label{eq: auxiliary state}
    X_t 
    &= 
    X_0 
    + 
    \int_0^t 
    b
    (X_s)
    \,\mathrm{d} s 
    + 
    \int_0^t 
    \sigma(X_s) 
    \,\mathrm{d} W_s
    ,\quad 
    X_0\sim \widetilde{q}_0.
\end{align}
Here $\widetilde{q}_0$ is a probability density with finite moments. A natural choice is $\widetilde{q}_0=q_0$, but to emphasize that this is not necessary, and since it gives increased flexibility in the resulting optimization problem, we have a general $\widetilde{q}_0$ here. 
From this point, with a slight abuse of notation, we denote by $(t_n)_{n=0}^N$ the time partition between the first two observation times
$t_{0,0}$ and $t_{1,0}$, i.e., $t_n \defeq t_{0,n}$, $n=0,\dots,N$, and the previously defined observation time points $(t_k)_{k=0}^K$ will consistently be denoted by  $(t_{k,0})_{k=0}^K$ to clarify the distinction. We do this to simplify the notation in the formulas and the following proofs. 
We have the following Feynman--Kac representation for the solution $p_{k}$. We remark that here it is sufficient that $\mu$ and $\sigma$ have bounded derivatives.
\begin{proposition}[Feynman--Kac representation formula]
    \label{prop: exact Feynman--Kac}
    The solutions $p_{k}$, $k=0,\dots,K-1$, to \eqref{eq: global Fokker--Planck with update}, in the time points $t_{k,n+1}$, $n=0,\dots,N-1$, and $x\in\R^d$, satisfy
    \begin{align}
    \label{eq: exact Feynman--Kac}
        \begin{split}
        p_{k}
        (t_{k,n+1},x,y_{0:k})
        &=
        \E
        \Big[
        p_{k}
        (t_{k,n},
        X_{t_{N}-t_{n}}^{t_N-t_{n+1},x}
        ,y_{0:k})
        +
        \int_{t_N-t_{n+1}}^{t_N-t_n} 
        F_b
        p_{k}
        (t_{k,N}-s,
        X_s^{t_N-t_{n+1},x}
        ,y_{0:k})
        \dd s
        \Big].
        \end{split}
    \end{align}
\end{proposition}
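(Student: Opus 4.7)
The plan is to derive \eqref{eq: exact Feynman--Kac} by a time-reversal trick followed by an application of It\^o's formula to a suitable composition with $X$.

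First, fix $k \in \{0,\dots,K-1\}$, $n\in\{0,\dots,N-1\}$, $y\in\Y$, and define the time-reversed function
\begin{align*}
    v(s,x) \defeq p_k(t_{k,N} - s,\, x,\, y_{0:k}), \qquad s \in [t_N - t_{n+1},\, t_N - t_n],\ x\in\R^d.
\end{align*}
Using $t_{k,N} = t_{k,0} + t_N$ and $t_{k,m} = t_{k,0} + t_m$, one checks that $v(t_N - t_{n+1},x) = p_k(t_{k,n+1},x,y_{0:k})$ and $v(t_N - t_n,x) = p_k(t_{k,n},x,y_{0:k})$, so the two end values are exactly the quantities we want to relate. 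By Proposition \ref{prop:properties of p}, $p_k(y) \in C_{\mathrm b}^{1,\infty}([t_k,t_{k+1}]\times\R^d;\R)$, hence $v \in C_{\mathrm b}^{1,\infty}$ on the relevant strip, which is more than enough regularity to apply It\^o's formula.

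Next, since $p_k$ solves the Fokker--Planck equation \eqref{eq: global Fokker--Planck with update} in the interval $(t_k, t_{k+1}]$ in the classical sense (a consequence of the smoothness in Proposition \ref{prop:properties of p} together with $A^* = A + F$), the chain rule gives
\begin{align*}
    \partial_s v(s,x) = -\partial_t p_k(t_{k,N}-s,x,y_{0:k}) = -A v(s,x) - F v(s,x).
\end{align*}
I then apply It\^o's formula to $v(s, X_s^{t_N - t_{n+1}, x})$ on the interval $s \in [t_N - t_{n+1},\, t_N - t_n]$; this is legitimate because $v$ has the required space-time regularity and $X$ solves \eqref{eq: auxiliary state} with smooth bounded coefficients. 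One obtains
\begin{align*}
    v\big(t_N - t_n,\, X_{t_N - t_n}^{t_N - t_{n+1},x}\big)
    = v(t_N - t_{n+1}, x)
    + \int_{t_N - t_{n+1}}^{t_N - t_n} (\partial_s v + Av)\big(s, X_s^{t_N - t_{n+1},x}\big)\,\dd s + M,
\end{align*}
where $M$ is the stochastic integral $\int \langle \nabla_x v, \sigma(X_s)\,\dd W_s\rangle$. Since $\nabla_x v$ is uniformly bounded on the time strip (by $p_k \in C_{\mathrm b}^{1,\infty}$) and $\sigma$ is bounded, $M$ is a genuine martingale with $\E[M]=0$.

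Finally, substituting $\partial_s v + A v = -Fv$ and taking expectations yields
\begin{align*}
    v(t_N - t_{n+1}, x) = \E\Big[v\big(t_N - t_n, X_{t_N - t_n}^{t_N - t_{n+1}, x}\big)\Big] + \int_{t_N - t_{n+1}}^{t_N - t_n} \E\Big[Fv\big(s, X_s^{t_N - t_{n+1},x}\big)\Big]\,\dd s.
\end{align*}
Translating back via $v(s,\cdot) = p_k(t_{k,N}-s,\cdot,y_{0:k})$ and noting $Fv(s,x) = Fp_k(t_{k,N}-s,x,y_{0:k})$ (because $F$ acts only in $x$) gives precisely \eqref{eq: exact Feynman--Kac}. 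The only genuinely delicate point is the justification that the stochastic integral $M$ is a true martingale and hence drops out under expectation; but as noted above, this is immediate from the uniform boundedness provided by Proposition \ref{prop:properties of p} together with the boundedness of $\sigma$, so I do not expect any substantial obstacle.
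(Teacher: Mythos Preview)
Your proof is correct and follows essentially the same approach as the paper: time-reverse $p_k$, apply It\^o's formula, substitute $\partial_s v + Av = -Fv$, use boundedness from Proposition~\ref{prop:properties of p} and of $\sigma$ to kill the martingale term, and take expectations. The paper's version is slightly more elaborate in that it works with the unconditioned process $X$ and conditional expectations (converting to $X^{t_N-t_{n+1},x}$ only at the end) and inserts a limit argument at the right endpoint $t_N-t_n$, but your more direct route via the conditioned process on the closed interval is equally valid given the $C_{\mathrm b}^{1,\infty}$ regularity.
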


\begin{proof}
    The proof follows a standard derivation; see, e.g., \cite{klebaner2012introduction}. We start by fixing $y\in \Y$. Furthermore, we simplify the notation by omitting the dependence on $y_{0:k}$, where it is possible to do so without confusion. We start by noting that the solution $p_{k}$ to \eqref{eq: global Fokker--Planck with update}, for $n=0,\dots,N-1$ and $t\in (t_{k,n},t_{k,n+1}]$, satisfies
    \begin{align}
        \frac{\partial}{\partial t}
        p_{k}(t)
        &=
        A_b
        p_{k}(t)
        +
        F_b
        p_{k}(t).
    \end{align}
    Reparametrization of time $t\mapsto t_{k,N}-t$, so that $t_{k,N}-t\in (t_{k,n},t_{k,n+1}]$, yields
    \begin{align} \label{proof eq: backward Kolmogorov eq sect 2}
        \frac{\partial}{\partial t}
        p_{k}(t_{k,N}-t)
        +
        A_b
        p_{k}(t_{k,N}-t)
        &=
        -
        F_b
        p_{k}(t_{k,N}-t)
        ,\quad
        t
        \in
        [t_{k,N}-t_{k,n+1}
        ,
        t_{k,N}-t_{k,n}).
    \end{align}
    Noting that $X$ has the infinitesimal generator $A_b$, and the fact that $p_{k}\in C^{1,2}([t_{k,0},t_{k+1,0}]\times\R^d;\R)$, allows  us to use Itô's formula for $p_{k}(t_{k,N}-t,X_t)$. This
    gives, $\mathbb{P}$-a.s.,  
    \begin{align}
        p_{k}(t_{k,N}-t,X_t)
        &=
        p_{k}(t_{k,n+1},X_{t_{k,N}-t_{k,n+1}})
        +
        \int_{t_{k,N}-t_{k,n+1}}^t
        \big\langle
        \nabla
        p_{k}(t_{k,N}-s,X_s)
        ,
        \sigma(
        X_s)
        \dd W_s
        \big\rangle
        \\
        &\quad+
        \int_{t_{k,N}-t_{k,n+1}}^t
        \Big(
        \frac{\partial}{\partial s}
        p_{k}(t_{k,N}-s,X_s)
        +
        A_b
        p_{k}(t_{k,N}-s,X_s)
        \Big)
        \dd s.
    \end{align}
    Inserting \eqref{proof eq: backward Kolmogorov eq sect 2} into the third term we get
    \begin{align}
        \begin{split} \label{proof eq: ito formula ito integral}
        p_{k}(t_{k,N}-t,X_t)
        &=
        p_{k}(t_{k,n+1},X_{t_{k,N}-t_{k,n+1}})
        +
        \int_{t_{k,N}-t_{k,n+1}}^t
        \big\langle
        \nabla
        p_{k}(t_{k,N}-s,X_s)
        ,
        \sigma(
        X_s)
        \dd W_s
        \big\rangle
        \\
        &\quad-
        \int_{t_{k,N}-t_{k,n+1}}^t
        F_b
        p_{k}(t_{k,N}-s,X_s)
        \dd s.    
        \end{split}
    \end{align}
    We recall that $p_{k}(t_{k,N}-s,y_{0:k})\in C_{\mathrm{b}}^2(\R^d;\R)$ for all $s\in[t_{k,N}-t_{k,n+1},t_{k,N}-t_{k,n})$, and by the assumptions in Section~\ref{sec:setting} we have that $\sigma$ satisfies a linear growth bound and $X$ has finite second moments on $[0,T]$. This guarantees that
    \begin{align}
        \begin{split}
            \int_{t_{k,N}-t_{k,n+1}}^{t_{k,N}-t_{k,n}}
            \E
            \Big[ 
            \big\| 
            \sigma(X_s)^{\top}
            \nabla
            p_{k}
            (t_{k,N}-s,
            X_s)
            \big\|^2 \Big] 
            \, \mathrm{d} s 
            <\infty
        \end{split}
    \end{align}
    and hence the Itô integral in \eqref{proof eq: ito formula ito integral} is a square integrable martingale with respect to $\mathcal{F}_{t_{k,N}-t_{k,n+1}}$.
    Taking the conditional expectation in \eqref{proof eq: ito formula ito integral} we obtain
    \begin{align}
        \begin{split} 
        &
        \E
        \Big[
        p_{k}(t_{k,N}-t,X_t)
        \mid
        \mathcal{F}_{t_{k,N}-t_{k,n+1}}
        \Big]
        \\
        &\hspace{2em}=
        \E
        \Big[
        p_{k}(t_{k,n+1},X_{t_{k,N}-t_{k,n+1}})
        -
        \int_{t_{k,N}-t_{k,n+1}}^t
        F_b
        p_{k}(t_{k,N}-s,X_s)
        \dd s
        \mid
        \mathcal{F}_{t_{k,N}-t_{k,n+1}}
        \Big]
        .    
        \end{split}
    \end{align}
    By reordering the terms and using the fact that $p_{k}(t_{k,n+1},X_{t_{k,N}-t_{k,n+1}})$ is $\mathcal{F}_{t_{k,N}-t_{k,n+1}}$-measurable, we get
    \begin{align}
        \begin{split} \label{proof eq: before applying t limit}
        &\hspace{1em}
        p_{k}(t_{k,n+1},X_{t_{k,N}-t_{k,n+1}})
        =
        \E
        \Big[
        p_{k}(t_{k,N}-t,X_t)
        +
        \int_{t_{k,N}-t_{k,n+1}}^t
        F_b
        p_{k}(t_{k,N}-s,X_s)
        \dd s
        \mid
        \mathcal{F}_{t_{k,N}-t_{k,n+1}}
        \Big]
        .    
        \end{split}
    \end{align}
    We note that the right endpoint limit in $t\in[t_{k,N}-t_{k,n+1},t_{k,N}-t_{k,n})$, for $x\in\R^d$, satisfies
    \begin{align}
        p_{k}(t_{k,N}-t
        ,x
        )
        \to
        p_{k}(t_{k,n}
        ,x
        )
        \quad
        \text{as}\quad
        t\to(t_{k,N}-t_{k,n}),
    \end{align}
    and $\mathbb{P}$-a.s.
    \begin{align}
        X_t
        \to
        X_{t_{k,N}-t_{k,n}}
        \quad
        \text{as}\quad
        t\to(t_{k,N}-t_{k,n}).
    \end{align}
    Combining these we see that the $L^2$-limit of the right hand side of \eqref{proof eq: before applying t limit} satisfies
    \begin{align}
    \label{proof eq: limit RHS sect 2}
        \begin{split}
        &\lim_{t\to(t_{k,N}-t_{k,n})}
        \E
        \bigg[
        \Big|
        p_{k}(t_{k,N}-t,X_t)
        +
        \int_{t_{k,N}-t_{k,n+1}}^t
        F_b
        p_{k}(t_{k,N}-s,X_s)
        \dd s
        \\
        &\hspace{6em}-
        \Big(
        p_{k}(t_{k,n},X_{t_{k,N}-t_{k,n}})
        +
        \int_{t_{k,N}-t_{k,n+1}}^{t_{k,N}-t_{k,n}}
        F_b
        p_{k}(t_{k,N}-s,X_s)
        \dd s
        \Big)
        \Big|^2
        \bigg]
        =
        0
        .
        \end{split}
    \end{align}    
    Inserting the limit from \eqref{proof eq: limit RHS sect 2} in \eqref{proof eq: before applying t limit} we obtain
    \begin{align}
        \begin{split} 
        &p_{k}(t_{k,n+1},X_{t_{k,N}-t_{k,n+1}})
        =
        \E
        \Big[
        p_{k}(t_{k,n},X_{t_{k,N}-t_{k,n}})
        +
        \int_{t_{k,N}-t_{k,n+1}}^{t_{k,N}-t_{k,n}}
        F_b
        p_{k}(t_{k,N}-s,X_s)
        \dd s
        \mid
        \mathcal{F}_{t_{k,N}-t_{k,n+1}}
        \Big]
        .    
        \end{split}
    \end{align}
    Rewriting the conditional expectation with respect to a conditioned process $(X_t^{s,x})_{t\ge s}$ (starting in $x\in\R^d$ at time $s$), and making the $y_{0:k}$-dependence explicit in the notation, we obtain
    \begin{align}
        \begin{split}
        &p_{k}(t_{k,n+1},x,y_{0:k})
        =
        \E
        \Big[
        p_{k}(t_{k,n},X_{t_{k,N}-t_{k,n}}^{t_{k,N}-t_{k,n+1},x},y_{0:k})
        +
        \int_{t_{k,N}-t_{k,n+1}}^{t_{k,N}-t_{k,n}}
        F_b
        p_{k}(t_{k,N}-s,X_s^{t_{k,N}-t_{k,n+1},x},y_{0:k})
        \dd s
        \Big]
        .    
        \end{split}
    \end{align}    
    Here we note that $t_{k,N}-t_{k,n} = t_N - t_n$ for all $k$ and $n$. The proved identity holds for all $y\in \Y$. This completes the proof.
\end{proof}
In the next step, we consider a forward Euler approximation of the integral term in \eqref{eq: exact Feynman--Kac}.
First, we introduce the first order differential operator $G_b$ acting on $\phi\in C^1(\R^d;\R)$ according to 
\begin{align}\label{eq:G}
    (G_b\phi)(x)
    =
    \phi(x)
    +
    \tau
    (
    F_b
    \phi
    )
    (x)
    ,
    \quad
    x\in\R^d.
\end{align}
We define the approximations $\pi_{k,n+1}$, $(k,n)\in\mathcal{I}_{K,N-1}$, at each time step $t_{k,n+1}$, of \eqref{eq: exact Feynman--Kac} by the recursive formula
\begin{align}
\label{eq: forward euler Feynman--Kac}
\begin{split}
    \pi_{k,n+1}
    (x,y_{0:k})
    &=
    \E
    \Big[
    G_b
    \pi_{k,n}
    \big(
    X_{t_{N}-t_{n}}^{t_N-t_{n+1},x},
    y_{0:k}
    \big)
    \Big],
    \\
    &\quad
    x\in\R^d
    ,\ 
    y\in \Y
    ,\ 
    k=0,\dots,K-1,\ 
    n=0,\dots,N-1,
\end{split}
\end{align}
satisfying
\begin{align} \label{eq: forwward euler Feynman--Kac initial}
\begin{split} 
    \pi_{0,0}(x,y_{0})
    &= 
    q_0(x)L(y_0,x),
    \\    
    \pi_{k,0}(x,y_{0:k}) 
    &=
    \pi_{k-1,N}(x,y_{0:k-1})
    L(y_k,x),
    \quad 
    k=1,\dots,K.
\end{split}
\end{align}
Notice that we have $n=0$ in the final update for $k=K$. Thus, $\pi_{k,n}$ is defined for $(k,n)\in\mathcal{I}_{K,N}=\{0,\dots,K-1\}\times \{0,\dots,N\}\cup \{K\}\times\{0\}$.

\begin{proposition}
    \label{proposition: pi C-smooth}
    There exists a unique $\pi_{k,n}\in L^\infty(\Y;C( \R^d;\R))$, $(k,n)\in\mathcal{I}_{K,N}$, satisfying \eqref{eq: forward euler Feynman--Kac} and \eqref{eq: forwward euler Feynman--Kac initial}.
    Furthermore,  $\pi_{k,n}(y)\in C_\mathrm{b}^\infty(\R^d;\R)$ for {$(k,n)\in\mathcal{I}_{K,N}$}, $y\in\Y$.
\end{proposition}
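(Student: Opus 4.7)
The plan is to prove both statements by induction on the pair $(k,n)$ traversed in the natural lexicographic order dictated by the recursion in \eqref{eq: forward euler Feynman--Kac}--\eqref{eq: forwward euler Feynman--Kac initial}, namely $(0,0)\to(0,1)\to\cdots\to(0,N)\to(1,0)\to\cdots\to(K,0)$. At each stage I need to verify two properties simultaneously: that $\pi_{k,n}(y)\in C_{\mathrm b}^\infty(\R^d;\R)$ pointwise in $y$, and that the resulting bounds (on $\pi_{k,n}$ and each of its spatial derivatives) are uniform in $y\in\Y$, so that membership in $L^\infty(\Y;C(\R^d;\R))$ follows. Uniqueness is immediate because each step of the recursion defines $\pi_{k,n}$ explicitly.

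For the base case, $\pi_{0,0}(x,y_0)=q_0(x)L(y_0,x)$ belongs to $C_{\mathrm b}^\infty(\R^d;\R)$ for each $y_0$ because $C_{\mathrm b}^\infty(\R^d;\R)$ is an algebra and both factors lie in it by assumption~\eqref{assumption: coefficient bound and lipschitz}. The uniform bound in $y_0$ comes from the global bound $\|L\|_{L^\infty(\R^{d'}\times\R^d)}<\infty$ together with the uniform bounds on $q_0$ and its derivatives. For the update step $\pi_{k,0}(x,y_{0:k})=\pi_{k-1,N}(x,y_{0:k-1})L(y_k,x)$ the same product-in-$C_{\mathrm b}^\infty$ argument applies, and uniform bounds in $y_{0:k}$ transfer from the inductive hypothesis on $\pi_{k-1,N}$ and the uniform bounds on $L$ by Leibniz.

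The substantive step is the prediction recursion. Assume $\pi_{k,n}(y)\in C_{\mathrm b}^\infty(\R^d;\R)$ with $y$-uniform bounds. Recalling $G\phi=\phi+\tau F\phi$ from \eqref{eq:G}, and observing from assumption~\eqref{assumption: coefficient bound and lipschitz} that the coefficients appearing in $F$ (products and derivatives of entries of $\mu$ and $a=\sigma\sigma^\top$) lie in $C_{\mathrm b}^\infty(\R^d;\R)$, we see that $F$ maps $C_{\mathrm b}^\infty(\R^d;\R)$ into itself, and hence so does $G$, uniformly in $y$. It therefore suffices to check that the one-step transition operator $\phi\mapsto v(x)\defeq \E[\phi(X^{t_N-t_{n+1},x}_{t_N-t_n})]$ maps $C_{\mathrm b}^\infty(\R^d;\R)$ into itself. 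This is a classical fact under assumption~\eqref{assumption: coefficient bound and lipschitz}: smoothness and boundedness of $\mu,\sigma$ imply that the stochastic flow $x\mapsto X^{s,x}_t$ is $\mathbb P$-a.s.\ in $C^\infty$ with all derivatives having finite moments of every order (Kunita's theorem), which after differentiation under the expectation yields $v\in C_{\mathrm b}^\infty(\R^d;\R)$ with bounds depending only on finitely many derivative bounds of $\phi$. Alternatively, one may invoke the same semigroup framework already used in the proof of Proposition~\ref{prop:properties of p}, writing $v=P(\tau)\phi$ for the Kolmogorov backward semigroup $P$ and noting that it preserves $C_{\mathrm b}^\infty(\R^d;\R)$.

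The main obstacle I expect is the precise quantification in the previous paragraph: namely, producing bounds on derivatives $\partial^\alpha v$ that are controlled by finitely many $\|\partial^\beta \phi\|_{L^\infty}$, uniformly over the (fixed, finite) time horizon $\tau=T/(KN)$. Once this is in hand one simply iterates: at step $(k,n+1)$ the function $\phi_{k,n}\defeq G\pi_{k,n}(\cdot,y)$ has all derivatives uniformly bounded in $y$ by the inductive hypothesis and the $C_{\mathrm b}^\infty$-boundedness of the coefficients of $F$, and therefore so does $\pi_{k,n+1}(\cdot,y)$. Composing finitely many such prediction and update steps in $\mathcal{I}_{K,N}$ yields the asserted regularity and the uniform-in-$y$ bound, completing the induction.
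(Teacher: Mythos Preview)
Your proposal is correct and follows essentially the same route as the paper: induction over $(k,n)$, the algebra property of $C_{\mathrm b}^\infty$ for the initial and update steps, and preservation of $C_{\mathrm b}^\infty$ by the one-step Kolmogorov transition for the prediction step (the paper cites \cite[Theorem~4.8.6]{Kloeden_Platen} or \cite{Hairer_2015} where you invoke Kunita's flow or, equivalently, the backward semigroup). Your treatment of uniqueness as definitional is in fact cleaner than the paper's appeal to a PDE uniqueness theorem, and you track the uniform-in-$y$ bounds more explicitly.
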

\begin{proof}
    We fix $y\in\Y$ and suppress it from the notation whenever suitable. The lemma is proved by induction. For the base case, with $k=0$ and $n=0$, from the assumption \eqref{assumption: coefficient bound and lipschitz} of Section~\ref{sec:setting} we have $\pi_{0,0}(y_{0}) = q_0 L(y_0)\in C_{\mathrm{b}}^\infty(\R^d;\R)$. 
    For the induction step we take $k\in\{0,1,\dots,K\}$, 
    $n\in \{1,\dots,N\}$,
    and assume that $\pi_{k,n-1}\in C_{\mathrm{b}}^\infty(\R^d;\R)$. 
    We use the fact that $\pi_{k,n}(x) 
    = 
    \E[
    G_b\pi_{k,n-1}
    (
    X_{t_{N}-t_{n-1}}^{t_N-t_{n},x}
    )]$, 
    is the Feynman--Kac solution $u\colon [t_{k,n-1},t_{k,n}]\times \R^d\to\R$, at the left endpoint $t_{k,n-1}$, to the Kolmogorov backward equation 
    \begin{align}\label{eq:KBE}
        \frac{\partial}{\partial t}
        u(t,x) 
        + 
        A_b
        u(t,x) 
        &= 
        0,
        \quad
        t\in[t_{k,n-1},t_{k,n});\quad 
        u(t_{k,n},x)
        =
        (
        G_b
        \pi_{k,n-1}
        )
        (x).
    \end{align}
    As a consequence of the assumption, we have $G_b\pi_{k,n-1}\in C_{\mathrm{b}}^\infty(\R^d;\R)$ since $\mu$, $b$, and $\sigma$ are infinitely smooth. By \cite[Theorem 4.8.6]{Kloeden_Platen} or \cite{Hairer_2015}, $u(t)\in C_{\mathrm{b}}^\infty(\R^d;\R)$ for all $t\in[t_{k,n-1},t_{k,n}]$ and this shows that $\pi_{k,n}\in C_{\mathrm{b}}^\infty(\R^d;\R)$. For the case $n=1$ we stress that, by the definition of $\pi$, we take $\pi_{k,0}=\pi_{k-1,N}L$ and notice that it belongs to $C_{\mathrm{b}}^\infty(\R^d;\R)$. The uniqueness of the solution follows by \cite[Theorem 9.11]{da2014introduction}.
    From the boundedness of $L$ it follows that $\pi_{k,n}\in L^\infty(\Y;C( \R^d;\R))$ and this completes the proof.
\end{proof}

The convergence of this approximation is stated in the following lemma, which we prove in Section~\ref{section: main theorem}. We remark that, while the solution $(p_k)_{k=0}^K$ to \eqref{eq: global Fokker--Planck with update} is independent of the auxiliary drift $b$, the discrete approximations $\pi_{k,n+1}$, $(k,n)\in\mathcal{I}_{K,N-1}$, do depend on $b$.
\begin{lemma}
\label{lemma: pi convergence}
    Let $p_k$, $k=0,\dots,K$, be the solution to \eqref{eq: global Fokker--Planck with update} and $\pi_{k,n}$, {$(k,n)\in\mathcal{I}_{K,N}$}, be the solution to \eqref{eq: forward euler Feynman--Kac} and \eqref{eq: forwward euler Feynman--Kac initial}. If $b$ satisfies \eqref{eq: mu-bar choice}, then
    for $\tau \defeq \frac{T}{KN}\leq 1$, there exists $C\defeq C(T,\mu,\sigma,L,K) > 0 $, such that 
    \begin{align}
    \label{eq: pi convergence}
        \sup_{\substack{
              {(k,n)\in\mathcal{I}_{K,N}}  
            }} 
        \big\|
        p_{k}(t_{k,n})
        -
        \pi_{k,n}
        \big\|_{
        L^\infty(\Y;L^\infty(\R^d;\R))
        }
        \leq
        C
        \tau.
    \end{align}
\end{lemma}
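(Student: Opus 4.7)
The plan is to prove the lemma by induction on $(k,n)\in\mathcal{I}_{K,N}$ in lexicographic order, with the error denoted $\epsilon_{k,n}(x,y) \defeq p_{k}(t_{k,n},x,y)-\pi_{k,n}(x,y)$. The base case $(0,0)$ is immediate from the initial conditions $\pi_{0,0}(x,y_0)=q_0(x)L(y_0,x)=p_0(0,x,y_0)$. At each observation time, both $p_k(t_k,\cdot,y)$ and $\pi_{k,0}(\cdot,y)$ are multiplied by the same bounded likelihood $L(y_k,\cdot)$, so $\epsilon_{k,0}=L(y_k,\cdot)\,\epsilon_{k-1,N}$ and the Bayes update merely contributes a factor $\|L\|_\infty$ to the norm, without creating a new error.

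For the prediction step, writing $\xi^x\defeq X^{t_N-t_{n+1},x}_{t_N-t_n}$ and subtracting \eqref{eq: forward euler Feynman--Kac} from the exact Feynman--Kac identity \eqref{eq: exact Feynman--Kac}, one obtains the recursion
\begin{align*}
\epsilon_{k,n+1}(x,y)
=\E\big[\epsilon_{k,n}(\xi^x,y)\big]
+\tau\,\E\big[F\epsilon_{k,n}(\xi^x,y)\big]
+\mathcal{R}_{k,n}(x,y),
\end{align*}
where the local truncation error $\mathcal{R}_{k,n}$ collects the difference between $\int_{t_N-t_{n+1}}^{t_N-t_n}Fp_{k}(t_{k,N}-s,X^{t_N-t_{n+1},x}_s,y)\dd s$ and $\tau\,Fp_k(t_{k,n},\xi^x,y)$. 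Applying It\^o's formula to $s\mapsto Fp_{k}(t_{k,N}-s,X^{t_N-t_{n+1},x}_s,y)$, the martingale part vanishes in expectation and the drift is uniformly bounded thanks to the $C_{\mathrm{b}}^{1,\infty}$-regularity of $p_k$ supplied by Proposition~\ref{prop:properties of p} together with $\mu,\sigma\in C_{\mathrm{b}}^\infty$, yielding $\|\mathcal{R}_{k,n}\|_{L^\infty(\Y;L^\infty(\R^d))}\le C\tau^2$.

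The main obstacle is the term $\tau\,\E[F\epsilon_{k,n}(\xi^x,y)]$. Since $F$ decomposes as $F\phi=b\cdot\nabla\phi+c\,\phi$ with bounded smooth $b,c$, a naive $L^\infty$ estimate would demand control of $\nabla\epsilon_{k,n}$, and iterating such a bound across the $KN=T/\tau$ steps would destroy the claimed rate. This is where the parabolic H\"ormander condition \eqref{assumption: Hörmander} is indispensable: under \eqref{assumption: Hörmander}, $\xi^x$ is Malliavin nondegenerate, so the stochastic integration-by-parts formula from Appendix~\ref{Appendix: Malliavin} provides a representation
\begin{align*}
\E\big[b(\xi^x)\cdot\nabla\epsilon_{k,n}(\xi^x,y)\big]
=\E\big[\epsilon_{k,n}(\xi^x,y)\,\mathcal{H}_{k,n}(x)\big]
\end{align*}
for an explicit Malliavin weight $\mathcal{H}_{k,n}$. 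The auxiliary lemmas of Section~\ref{section: error analysis} supply the sharp moment bound on $\mathcal{H}_{k,n}$ required to absorb the $\tau$ prefactor into a one-step stability estimate of the form $\|\epsilon_{k,n+1}\|_{L^\infty}\le(1+C\tau)\|\epsilon_{k,n}\|_{L^\infty}+C\tau^2$.

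Chaining this prediction-step recursion across the $N$ substeps, alternating with the update-step inequality $\|\epsilon_{k,0}\|_{L^\infty}\le\|L\|_\infty\|\epsilon_{k-1,N}\|_{L^\infty}$ at each of the $K$ observation times, and applying a discrete Gronwall argument yields
\begin{align*}
\sup_{(k,n)\in\mathcal{I}_{K,N}}\|\epsilon_{k,n}\|_{L^\infty(\Y;L^\infty(\R^d))}
\le\|L\|_\infty^{K}\,e^{CT}\cdot KN\cdot C\tau^2
=C(T,\mu,\sigma,L,K)\,\tau,
\end{align*}
which is the claimed bound. The delicate step is the moment control of $\mathcal{H}_{k,n}$ over an interval of length $\tau$ under a parabolic H\"ormander rather than a uniformly elliptic assumption: the time-singularities in the resulting Bismut--Elworthy--Li-type weights must be handled sharply enough that the constant in the $(1+C\tau)$ factor is independent of $\tau$ when iterated $T/\tau$ times.
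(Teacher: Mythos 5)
Your argument is correct and follows the same route the paper takes: subtracting the two Feynman--Kac recursions, isolating an $O(\tau^2)$ local truncation term via It\^o's formula and the Lipschitz-in-time regularity of $p_k$, handling the $f_1\cdot\nabla\epsilon_{k,n}$ contribution with the Malliavin integration-by-parts estimate of Lemma~\ref{lemma: new Malliavin gradient lemma} so as to reduce it back to an $L^\infty$ bound on $\epsilon_{k,n}$ itself, and then chaining the resulting $(1+C\tau)$-stability across the $N$ substeps and $K$ measurement updates by a discrete Gronwall argument with base case $\epsilon_{0,0}=0$. This is precisely the content of the paper's Lemma~\ref{lemma: recursive lemma 1} combined with its proof of Lemma~\ref{lemma: pi convergence} in Section~\ref{section: main theorem}, and your closing caveat on uniform-in-$\tau$ control of the Malliavin weight correctly identifies where the H\"ormander hypothesis enters through Lemma~\ref{lemma: new Malliavin gradient lemma}.
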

Since the equation for the process $X$ lacks analytical solution in general, we must consider approximations. On the finer time mesh $t_{k,n}$, {$(k,n)\in\mathcal{I}_{K,N-1}$}, we define an Euler--Maruyama approximation $Z$, with $Z_{0,0}\sim \widetilde{q}_0$, by 
\begin{align}
\label{eq: Z EM}
    Z_{k,n+1} 
    = 
    Z_{k,n}
    + 
    b
    (Z_{k,n})(t_{k,n+1}-t_{k,n})
    +
    \sigma(Z_{k,n})
    (W_{t_{k,n+1}}-W_{t_{k,n}}).
\end{align}
For convenience, we let $Z_{n} \defeq Z_{0,n}$, $n=0,\dots,N$. Now, we define the approximations $\overline{\pi}_{k,n+1}$, of $\pi_{k,n+1}$, by replacing $X$ with $Z$ in \eqref{eq: forward euler Feynman--Kac}, i.e., by the recursion
\begin{align}
\label{eq: EM Feynman--Kac}
\begin{split}
    \overline{\pi}_{k,n+1}
    (x,y_{0:k})
    &=
    \E
    \Big[
    G_b
    \overline{\pi}_{k,n}
    (Z_{N-n}^{t_{N}-t_{n+1},x},
    y_{0:k})
    \Big],
    \\
    & \quad
    x\in\R^d,
    \ y\in\Y,
    \ 
    k=0,\dots,K-1, \ 
    n=0,\dots,N-1,
\end{split}
\end{align}
satisfying
\begin{align}
\label{eq: EM Feynman--Kac initial}
\begin{split}    
    \overline{\pi}_{0,0}       
    (x,y_0)
    &= 
    q_0(x)L(y_0,x),\\
    \overline{\pi}_{k,0}(x,y_{0:k}) 
    &=
    \overline{\pi}_{k-1,N}(x,y_{0:k-1})
    L(y_k,x),
    \quad 
    k=1,\dots,K.
\end{split}
\end{align}
The following proposition can be proved identically to Proposition~\ref{proposition: pi C-smooth}, adapting arguments with $Z$ satisfying an SDE with piecewise constant coefficients on each time interval $[t_{k,n},t_{k,n+1})$.
\begin{proposition}\label{proposition: pi-streck smooth} 
    There exists a unique $\overline{\pi}_{k,n}\in L^\infty(\Y;C(\R^d;\R))$, {$(k,n)\in\mathcal{I}_{K,N}$}, satisfying \eqref{eq: EM Feynman--Kac} and \eqref{eq: EM Feynman--Kac initial}. 
    Moreover, $\overline{\pi}_{k,n}(y)\in C_\mathrm{b}^\infty(\R^d;\R)$  for $(k,n)\in\mathcal{I}_{K,N}$, $y\in\Y$.
\end{proposition}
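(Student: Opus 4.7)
The plan is to follow the induction scheme used in the proof of Proposition~\ref{proposition: pi C-smooth}, with the Feynman--Kac representation replaced by a direct Gaussian computation. The crucial structural observation is that, on a single interval $[t_{k,n}, t_{k,n+1})$, the process $Z$ evolves under piecewise constant coefficients frozen at the starting point, so the one-step transition conditional on the starting location $x$ is Gaussian. More precisely, suppressing $y\in\Y$ from the notation,
\begin{align}
    \overline{\pi}_{k,n+1}(x)
    =
    \int_{\R^d}
    G\overline{\pi}_{k,n}\bigl(x+\mu(x)\tau+\sqrt{\tau}\,\sigma(x)z\bigr)\,\phi(z)\,\mathrm{d}z,
\end{align}
where $\phi$ is the density of the standard $d$-dimensional Gaussian. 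This identity replaces the Kolmogorov backward solve \eqref{eq:KBE} used in the proof of Proposition~\ref{proposition: pi C-smooth}, so there is no need to invoke \cite[Theorem 4.8.6]{Kloeden_Platen}.

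I would fix $y\in\Y$ and prove by induction over $(k,n)\in\mathcal{I}_{K,N}$ that $\overline{\pi}_{k,n}(y)\in C_\mathrm{b}^\infty(\R^d;\R)$. The base case $\overline{\pi}_{0,0}(y_0)=q_0 L(y_0)$ is immediate from assumption~\eqref{assumption: coefficient bound and lipschitz}, since $C_\mathrm{b}^\infty(\R^d;\R)$ is an algebra. For the inductive prediction step, assume $\overline{\pi}_{k,n}\in C_\mathrm{b}^\infty(\R^d;\R)$. The operator $G$ is first order with coefficients in $C_\mathrm{b}^\infty$, so $G\overline{\pi}_{k,n}\in C_\mathrm{b}^\infty(\R^d;\R)$. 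The map $x\mapsto x+\mu(x)\tau+\sqrt{\tau}\sigma(x)z$ is smooth in $x$ with derivatives of every order bounded uniformly in $x$ by polynomials in $\|z\|$, thanks to $\mu,\sigma\in C_\mathrm{b}^\infty$. Applying the chain rule and using that all derivatives of $G\overline{\pi}_{k,n}$ are bounded, the $x$-derivatives of the integrand are therefore uniformly bounded in $x$ by polynomials in $\|z\|$. Differentiation under the integral sign, justified by integrability of polynomials against $\phi$, yields $\overline{\pi}_{k,n+1}\in C_\mathrm{b}^\infty(\R^d;\R)$. The update transition $\overline{\pi}_{k,0}=\overline{\pi}_{k-1,N}\,L(y_k)$ for $k\geq 1$ preserves $C_\mathrm{b}^\infty$-regularity by the algebra property.

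Uniqueness is immediate from the explicit recursive definition, as each step is a deterministic expression in the previous iterate. The $L^\infty(\Y;C(\R^d;\R))$ bound follows by induction: the uniform-in-$y$ boundedness of $q_0 L(y_0)$ and of $L(y_k)$, together with Jensen's inequality applied to the one-step expectation and the boundedness of the coefficients defining $G$, propagates a uniform-in-$y$ supremum bound through the iterations. The only step requiring careful bookkeeping, and hence the main technical obstacle, is the polynomial-in-$\|z\|$ control of the chain-rule derivatives at arbitrary order; this is routine given that $\mu$, $\sigma$, and all their derivatives are bounded, but it must be tracked inductively because each pass through the prediction recursion adds one further layer of composition before $G$ and expectation are applied.
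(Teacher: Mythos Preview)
Your proposal is correct and takes a somewhat different, more elementary route than the paper's sketch. The paper does not give a full proof but points back to Proposition~\ref{proposition: pi C-smooth}: treat $Z$ on each subinterval as the solution of an SDE with coefficients frozen at the starting point, so that $\overline{\pi}_{k,n+1}$ is again the solution of a Kolmogorov backward equation \eqref{eq:KBE} with smooth coefficients, and then invoke the same regularity references (\cite[Theorem 4.8.6]{Kloeden_Platen}, \cite{Hairer_2015}) together with the uniqueness result from \cite{da2014introduction}. Your approach instead exploits the explicit Gaussian one-step kernel of Euler--Maruyama and differentiates under the integral, which avoids any appeal to PDE regularity theory and makes the $C_\mathrm{b}^\infty$ preservation completely transparent. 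The trade-off is that the paper's route keeps the argument strictly parallel to the continuous case, while yours is self-contained but specific to the discrete scheme.

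One small remark: your closing sentence about ``each pass through the prediction recursion adds one further layer of composition'' slightly overstates the bookkeeping. The induction is over $(k,n)$, and at each step you work with a fresh function $\overline{\pi}_{k,n}\in C_\mathrm{b}^\infty$; there is no accumulating composition to track, only a single application of the Euler map inside a single expectation. The polynomial-in-$\|z\|$ control via Fa\`a di Bruno is needed once per step and does not compound. Also, for the uniform-in-$y$ bound in $L^\infty(\Y;C(\R^d;\R))$, note that propagating the sup bound through $G$ requires a uniform-in-$y$ bound on $\nabla\overline{\pi}_{k,n}$ as well; this follows from your induction since the $C_\mathrm{b}^\infty$ bounds you establish are uniform in $y$ (because $L\in C_\mathrm{b}^\infty(\R^{d'}\times\R^d;\R)$ gives uniform control of the initial and update data), but it is worth making that dependence explicit.
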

The following lemma states the convergence of this second approximation step, with the proof being addressed in Section~\ref{section: main theorem}.
\begin{lemma}
\label{lemma: pi-streck convergence}
    Let $\pi_{k,n}$, {$(k,n)\in\mathcal{I}_{K,N}$}, be the solution to \eqref{eq: forward euler Feynman--Kac} and \eqref{eq: forwward euler Feynman--Kac initial}, and $\overline{\pi}_{k,n}$, $(k,n)\in\mathcal{I}_{K,N}$, be the solution to \eqref{eq: EM Feynman--Kac} and \eqref{eq: EM Feynman--Kac initial}. 
    If $b$ satisfies \eqref{eq: mu-bar choice}, then for $\tau \defeq \frac{T}{KN}\leq 1$, there exists $C\defeq C(T,\mu,\sigma,L,K) > 0 $, such that 
    \begin{align}
    \label{eq: pi-streck convergence}
        \sup_{\substack{
              {(k,n)\in\mathcal{I}_{K,N}}  
            }} 
        \big\|
        \pi_{k,n}
        -
        \overline{\pi}_{k,n}
        \big\|_{
        L^\infty(\Y;L^\infty(\R^d;\R))
        }
        \leq
        C
        \tau.
    \end{align}
\end{lemma}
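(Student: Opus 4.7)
My plan is induction on the measurement index $k$. For fixed $y\in\Y$ I suppress the $y$-dependence and write $e_{k,n}\defeq \pi_{k,n}-\overline{\pi}_{k,n}$. The base case $e_{0,0}=0$ is immediate from the matching initial conditions, and at a measurement update the recursions give $e_{k+1,0}=L(y_{k+1},\cdot)\,e_{k,N}$, contributing only the factor $\|L\|_{L^\infty}$. Hence it suffices to control the $n$-iteration from $0$ to $N$ for fixed $k$, starting from $\pi_{k,0}=\overline{\pi}_{k,0}$ as supplied by the outer induction. All constants below are independent of $y\in\Y$, since the coefficients $\mu,\sigma$ do not depend on $y$ and $L$ is uniformly bounded.

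Introducing the one-step operators $T_\tau\phi(x)\defeq \E[G\phi(X_{s+\tau}^{s,x})]$ and $\overline{T}_\tau\phi(x)\defeq \E[G\phi(Z_{s+\tau}^{s,x})]$, with $s=t_N-t_{n+1}$, we have $\pi_{k,n+1}=T_\tau\pi_{k,n}$ and $\overline{\pi}_{k,n+1}=\overline{T}_\tau\overline{\pi}_{k,n}$. The discrete telescoping identity
\begin{align*}
    T_\tau^{N}-\overline{T}_\tau^{N}=\sum_{n=0}^{N-1}\overline{T}_\tau^{\,n}\,(T_\tau-\overline{T}_\tau)\,T_\tau^{N-1-n}
\end{align*}
applied to $\pi_{k,0}$ yields $e_{k,N}=\sum_{n=0}^{N-1}\overline{T}_\tau^{\,n}(T_\tau-\overline{T}_\tau)\pi_{k,N-1-n}$, so the task reduces to two ingredients: a one-step weak-error bound $\|(T_\tau-\overline{T}_\tau)\pi_{k,m}\|_{L^\infty}\leq C\tau^2$ and a uniform stability bound $\|\overline{T}_\tau^{\,n}\|_{L^\infty\to L^\infty}\leq C$ for $n\tau\leq T$. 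Combined these give $\|e_{k,N}\|_{L^\infty}\leq C\cdot N\cdot \tau^2=C\tau$, and iterating the measurement-update bound over the $K$ steps completes the proof.

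The one-step weak error is the classical Talay--Tubaro estimate for Euler--Maruyama applied to the smooth test function $G\pi_{k,m}$. Proposition~\ref{proposition: pi C-smooth} guarantees $\pi_{k,m}\in C_b^\infty(\R^d;\R)$, and uniform-in-$(k,m,y)$ control of a fixed number of derivatives of $G\pi_{k,m}$ is obtained separately by propagating smoothness along the recursion using the regularizing action of the exact Markov semigroup under the parabolic H\"ormander condition.

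The main obstacle is the uniform stability of $\overline{T}_\tau^{\,n}$ on $L^\infty$. Writing $\overline{T}_\tau\phi(x)=\E[\phi(Z_{s+\tau}^{s,x})]+\tau\,\E[F\phi(Z_{s+\tau}^{s,x})]$ and noting that $F$ is a first-order differential operator, a direct bound on the $\tau F$-term would require $C^1$-control of $\phi$ that is not preserved under the induction. A single-step Malliavin integration by parts would transfer the derivative off $\phi$ but produce weights of order $\tau^{-1/2}$, which accumulate catastrophically over $N\sim 1/\tau$ iterations. The remedy is to expand $\overline{T}_\tau^{\,n}$ as a sum indexed by the choice of Markov-step versus $\tau F$-factor at each level, group consecutive $F$-insertions into macroscopic time windows, and perform stochastic integration by parts from the Malliavin calculus (Appendix~\ref{Appendix: Malliavin}) across each such window. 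The parabolic H\"ormander condition \eqref{eq: parabolic hörmander} delivers non-degeneracy of the Malliavin covariance matrix of $Z$ on intervals of macroscopic length, with bounded inverse moments, producing $O(1)$ Malliavin weights and hence the required uniform stability.
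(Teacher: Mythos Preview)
Your route diverges from the paper's, and the stability step you land on is a genuine gap. Telescoping as $T_\tau^N-\overline{T}_\tau^N=\sum_n\overline{T}_\tau^{\,n}(T_\tau-\overline{T}_\tau)T_\tau^{N-1-n}$ forces you to prove uniform $L^\infty\!\to\!L^\infty$ stability of the iterated \emph{approximate} operator $\overline{T}_\tau^{\,n}$. Because $G=\mathrm{id}+\tau F$ with $F$ first-order, this is delicate, and your remedy---expand into $2^n$ terms, group $F$-insertions into macroscopic windows, Malliavin integrate by parts across each window---is only a sketch: it would need inverse-moment bounds for the Malliavin covariance of the Euler--Maruyama process $Z$ (not $X$) under H\"ormander, together with control of the resulting combinatorial sum, and you supply neither. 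There is also a slip in the set-up: the outer induction does not give $\pi_{k,0}=\overline{\pi}_{k,0}$ for $k\ge1$, only $e_{k,0}=L(y_k,\cdot)\,e_{k-1,N}$, so your telescoping identity is missing the term $\overline{T}_\tau^{\,N}e_{k,0}$, which again requires the very stability you have not established.

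The paper bypasses all of this with the opposite one-step decomposition. Adding and subtracting $(G\overline{\pi}_{k,n})(X_{t_N-t_n}^{t_N-t_{n+1},x})$ yields the recursion of Lemma~\ref{lemma: recursive lemma 2}: term I is stability of the \emph{exact} one-step map $T_\tau$ applied to $e_{k,n}$, and term II is the one-step weak Euler error on the smooth test function $G\overline{\pi}_{k,n}$ (regularity from Proposition~\ref{proposition: pi-streck smooth}), giving $\text{II}\le C\tau^2$ directly. The only gradient to be removed sits in the $\tau F$-part of term I and is handled by a single application of Lemma~\ref{lemma: new Malliavin gradient lemma} for the exact process $X$, producing $\text{I}\le(1+C\tau)\|e_{k,n}\|$. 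The global bound then follows from the same discrete Gr\"onwall iteration as in the proof of Lemma~\ref{lemma: pi convergence}. No stability of iterated $\overline{T}_\tau$, no Malliavin calculus for $Z$, and no macroscopic-window bookkeeping are needed.
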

Finally, we reach our first main result, namely the convergence of the approximation $\overline{\pi}$ of $p$.
\begin{theorem}
\label{thm: main theorem}
    Let $p_k$, $k=0,\dots,K$, be the solution to \eqref{eq: global Fokker--Planck with update} and $\overline{\pi}_{k,n}$, {$(k,n)\in\mathcal{I}_{K,N}$}, be the solution to \eqref{eq: EM Feynman--Kac} and \eqref{eq: EM Feynman--Kac initial}. If $b$ satisfies \eqref{eq: mu-bar choice}, then for
    $\tau \defeq \frac{T}{KN}\leq 1$, there exists $C\defeq C(T,\mu,\sigma,L,K) > 0 $, such that 
    \begin{align}
    \label{eq: main theorem}
        \sup_{\substack{
              {(k,n)\in\mathcal{I}_{K,N}}  
            }} 
        \big\|
        p_{k}(t_{k,n})
        -
        \overline{\pi}_{k,n}
        \big\|_{
        L^\infty(\Y;L^\infty(\R^d;\R))
        }
        \leq
        C
        \tau.
    \end{align}
\end{theorem}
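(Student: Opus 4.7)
The statement of Theorem 2.1 is already set up to follow almost immediately from the two convergence lemmas stated just before it, so my plan is to prove it by a simple triangle inequality rather than by any fresh analytic argument.

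Concretely, the first step is to insert the intermediate quantity $\pi_{k,n}$ between the true filter $p_k(t_{k,n})$ and the fully discrete approximation $\overline{\pi}_{k,n}$, writing
\begin{align}
    \big\| p_k(t_{k,n}) - \overline{\pi}_{k,n} \big\|_{L^\infty(\Y;L^\infty(\R^d;\R))}
    &\leq
    \big\| p_k(t_{k,n}) - \pi_{k,n} \big\|_{L^\infty(\Y;L^\infty(\R^d;\R))}
    \\
    &\quad+
    \big\| \pi_{k,n} - \overline{\pi}_{k,n} \big\|_{L^\infty(\Y;L^\infty(\R^d;\R))},
\end{align}
uniformly over $(k,n)\in\mathcal{I}_{K,N}$.

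The second step is to take the supremum over $(k,n)\in\mathcal{I}_{K,N}$ of both terms on the right-hand side, and invoke Lemma~\ref{lemma: pi convergence} for the first term and Lemma~\ref{lemma: pi-streck convergence} for the second. Each lemma supplies a constant of the form $C(T,\mu,\sigma,L,K)$ multiplied by $\tau$ under the standing assumption $\tau = \tfrac{T}{KN} \leq 1$. Summing these two contributions and relabeling the constant yields exactly the bound \eqref{eq: main theorem}.

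The main difficulty is therefore not in Theorem~\ref{thm: main theorem} itself but pushed back into the two lemmas: Lemma~\ref{lemma: pi convergence} will require estimating the local truncation error of the forward Euler step \eqref{eq: forward euler Feynman--Kac} applied to the Feynman--Kac representation \eqref{eq: exact Feynman--Kac}, propagating it through $K$ measurement updates (each multiplying by a factor involving $\|L\|_\infty$), and using the regularity $p_k(y)\in C^{1,\infty}_{\mathrm b}$ from Proposition~\ref{prop:properties of p} to control the one-step remainder. Lemma~\ref{lemma: pi-streck convergence} will, as indicated in the introduction, use Malliavin calculus/integration by parts to transfer derivatives on $\pi_{k,n}$ (which appear through $G$ and through the comparison of $X$ and $Z$) onto the density, thereby obtaining a $\tau$-rate even though $G$ is first-order. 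Once both lemmas are in hand, the proof of Theorem~\ref{thm: main theorem} is just the two displayed inequalities above.
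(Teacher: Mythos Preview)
Your proposal is correct and matches the paper's own proof exactly: the paper proves Theorem~\ref{thm: main theorem} in one line by the triangle inequality together with Lemmas~\ref{lemma: pi convergence} and~\ref{lemma: pi-streck convergence}. Your additional remarks about what those lemmas will require are also in line with how the paper actually proves them in Section~\ref{section: error analysis}.
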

\begin{proof}
   This follows directly from Lemmas~\ref{lemma: pi convergence} and \ref{lemma: pi-streck convergence} together with the triangle inequality.
\end{proof}

\subsection{Optimization based formulation of the deep splitting scheme}

Here we reformulate the conditional expectation,
formulated as a conditioned process in the proof of Proposition~\ref{prop: exact Feynman--Kac}, as the optimum from a recursive minimization problem. This will allow us to approximate the solution by means of a neural network and stochastic gradient descent. We discuss this approximation further in Section~\ref{sect: numerical evaluation}, in the context of numerical examples. The first optimization problem defined below gives a solution to the recursion \eqref{eq: EM Feynman--Kac} for a fixed $y\in\Y$, and was first introduced in \cite{Arnulf} for stochastic PDE trajectories. In \cite{Crisan_Lobbe} a similar approach was introduced for the Fokker--Planck equation. A scheme based on this formulation requires retraining of the neural network for every observation path and works in an offline setting, where one is only interested in the inference of a single path. We prove this proposition below and an alternative proof can be found in \cite[Proposition 2.7]{Beck}. 
\begin{proposition}
\label{prop: first minimization}
    The functions $\overline{\pi}_{k,n+1}$, $k=0,\dots,K-1$, $n=0,\dots,N-1$, defined in \eqref{eq: EM Feynman--Kac} and \eqref{eq: EM Feynman--Kac initial} satisfy the minimization problems 
    \begin{align} 
    \label{eq: first minimization}
    \begin{split}
        &(\overline{\pi}_{k,n+1}
        (x,y_{0:k}))_{x\in \mathbb{R}^d}
        =
        \mathop{\mathrm{arg\,min}}_{u\in C(\mathbb{R}^d;\mathbb{R})}
        \E
        \Big[ \big| 
        u(Z_{N-(n+1)})  
        -
        G_b
        \overline{\pi}_{k,n}
        (Z_{N-n},y_{0:k})
        \big|^2 \Big]
        ,\quad
        n=0,\dots,N-1
        ,\ y\in\mathbb Y.
    \end{split}
    \end{align}
\end{proposition}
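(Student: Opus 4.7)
The plan is to recognize the minimization in \eqref{eq: first minimization} as the standard $L^2$-projection formulation of conditional expectation, then identify that conditional expectation with $\overline{\pi}_{k,n+1}$ via the Markov property of the Euler--Maruyama chain $Z$. Fix $y\in\mathbb{Y}$ and suppress it from the notation.

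First I would note that the Euler--Maruyama recursion \eqref{eq: Z EM} defines a time-homogeneous Markov chain on the grid with step $\tau$: the transition from $Z_{N-(n+1)}$ to $Z_{N-n}$ depends only on $Z_{N-(n+1)}$ and the independent increment of $W$. Hence, by the tower property together with the definition of the conditioned process $Z^{s,x}$,
\begin{equation*}
    \E\big[ G\overline{\pi}_{k,n}(Z_{N-n}) \mid Z_{N-(n+1)} \big]
    =
    \varphi\big(Z_{N-(n+1)}\big),
    \quad
    \varphi(x)
    \defeq
    \E\big[ G\overline{\pi}_{k,n}(Z_{N-n}^{t_N - t_{n+1},x}) \big].
\end{equation*}
Comparison with \eqref{eq: EM Feynman--Kac} shows $\varphi(x) = \overline{\pi}_{k,n+1}(x,y_{0:k})$.

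Second I would invoke the classical $L^2$-projection property of conditional expectation: for any $V\in L^2$ and any $\sigma$-algebra $\mathcal{G}$, the conditional expectation $\E[V\mid\mathcal{G}]$ minimizes $U \mapsto \E[|U - V|^2]$ over $\mathcal{G}$-measurable $U\in L^2$. Here $V \defeq G\overline{\pi}_{k,n}(Z_{N-n}, y_{0:k})$ is bounded, hence square integrable, by Proposition~\ref{proposition: pi-streck smooth} combined with the boundedness of $\mu$, $\sigma$ and their derivatives assumed in \eqref{assumption: coefficient bound and lipschitz}; and every $\sigma(Z_{N-(n+1)})$-measurable random variable is of the form $u(Z_{N-(n+1)})$ for some Borel $u$ by the Doob--Dynkin lemma. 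Applied with $\mathcal{G} = \sigma(Z_{N-(n+1)})$, this identifies $\overline{\pi}_{k,n+1}(\cdot, y_{0:k})$ as a minimizer over all Borel $u$.

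To close the proof I would appeal to Proposition~\ref{proposition: pi-streck smooth}, which gives $\overline{\pi}_{k,n+1}(\cdot,y_{0:k}) \in C_{\mathrm b}^\infty(\R^d;\R) \subset C(\R^d;\R)$, so it belongs to the feasible set in \eqref{eq: first minimization} and thus attains the argmin over continuous functions as well. The only subtlety is that, if the law of $Z_{N-(n+1)}$ does not charge all of $\R^d$ (which can happen under a merely hypoelliptic diffusion), the minimizer is determined only on $\mathrm{supp}(\mathbb{P}_{Z_{N-(n+1)}})$; the equality in \eqref{eq: first minimization} is to be read in the sense that $\overline{\pi}_{k,n+1}(\cdot, y_{0:k})$ is an element of the argmin set, a point already implicit in the tuple notation $(\overline{\pi}_{k,n+1}(x,y_{0:k}))_{x\in\R^d}$ on the left-hand side. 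No single step is a serious obstacle; the main care is in correctly marrying the Markov-property identification with the projection characterization over the right function class.
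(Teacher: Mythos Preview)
Your proposal is correct and follows essentially the same approach as the paper: identify $\overline{\pi}_{k,n+1}(Z_{N-(n+1)},y_{0:k})$ with the conditional expectation $\E[G\overline{\pi}_{k,n}(Z_{N-n},y_{0:k})\mid \sigma(Z_{N-(n+1)})]$ via the Markov structure of the Euler--Maruyama chain and \eqref{eq: EM Feynman--Kac}, invoke the $L^2$-projection characterization of conditional expectation to obtain the argmin over measurable functions, and then use Proposition~\ref{proposition: pi-streck smooth} to restrict the argmin to $C(\R^d;\R)$. Your added remarks on square integrability, Doob--Dynkin, and the support of $Z_{N-(n+1)}$ are extra care not present in the paper's proof but do not change the argument.
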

\begin{proof}
    We start by fixing $y\in\Y$, $k=0,\dots,K-1$, $n=0,\dots,N-1$, and introduce the random variables 
    \begin{align*}
        \Phi_{k,n+1}
        &=
        \overline{\pi}_{k,n+1}
        (Z_{N-(n+1)},
        y_{0:k}),
        \\
        \Xi_{k,n}
        &=
        G_b
        \overline{\pi}_{k,n}
        (Z_{N-n},
        y_{0:k}).       
    \end{align*}
    By \eqref{eq: EM Feynman--Kac} we have
    $
        \Phi_{k,n+1}
        =
        \E
        [
        \Xi_{k,n}
        \mid
        \mathfrak{S}(Z_{N-(n+1)})
        ]
    $.
    and by the interpretation of the conditional expectation as an orthogonal $L^2$-projection, see  \cite[Corollary 8.17]{Klenke}, we thus have that $\Phi_{k,n+1}$ is the unique minimizer in
    $
        L^2(\Omega;\mathfrak{S}(Z_{N-(n+1)}))
    $
    of
    $
        \Phi
        \mapsto
        \E
        [
        |
        \Phi
        -
        \Xi_{k,n}
        |^2
        ]
    $.
    Moreover, we have $\Phi_{k,n+1} = u^*(Z_{N-(n+1)})$, where 
    \begin{align} \label{proof eq: argmin L0}
        u^*
        &=
        \mathop{\mathrm{arg\,min}}_{u\in \mathcal{L}^0(\R^d;\R)}
        \E
        \Big[
        \big|
        u(Z_{N-(n+1)})
        -
        \Xi_{k,n}      
        \big|^2
        \Big].
    \end{align}
    We know that $u^* = \overline{\pi}_{k,n+1}$ from the definition of $\Phi_{k,n+1}$ and conclude that $u^*$ is continuous, since $\overline{\pi}_{k,n+1}\in C(\R^d;\R)$ by Proposition~\ref{proposition: pi-streck smooth}. We can thus minimize over continuous functions in \eqref{proof eq: argmin L0}.
\end{proof}
We generalize the minimization problem obtained in Proposition~\ref{prop: first minimization} by making a change of probability measure. 
Instead of fixing $y$ we consider the expectation with respect to a probability measure $\mathbb{P}_{Z}\times \mathbb{P}_{Y}$, where $\mathbb{P}_{Z}$ is the distribution of \eqref{eq: Z EM}, and $\mathbb{P}_{Y}$ can be chosen arbitrarily. However, it is natural to use the measurement model $g$ in the statistical model \eqref{introduction: system of equations}.
The obtained solution becomes a deterministic function over $\R^d\times \mathrm{supp}(\mathbb{P}_Y)$, allowing for instantaneous evaluation of new observation sequences, suitable for an online setting. In Section~\ref{section: numerical experiments} we use neural networks as function approximators and the new optimization problem drops the need of retraining the networks. 
\begin{proposition}
\label{prop: second minimization}
    Let $Y\in L^\infty(\Omega;\mathbb{Y})$ be a random variable distributed with $\mathbb{P}_Y$, 
    $\overline{\pi}_{k,n+1}$, {$(k,n)\in\mathcal{I}_{K,N-1}$} be the solutions to \eqref{eq: EM Feynman--Kac}--\eqref{eq: EM Feynman--Kac initial} and the functions $\widetilde{\pi}_{k,n+1}\colon \R^d\times\mathrm{supp}(\mathbb{P}_Y)\to \R$, {$(k,n)\in\mathcal{I}_{K,N-1}$}, be the solutions to the recursive optimization problems
    \begin{align} 
    \label{eq: 
    second minimization}
    \begin{split}
        (\widetilde{\pi}_{k,n+1}
        (x,y))_{(x,y)\in \mathbb{R}^d\times \mathrm{supp}(\mathbb{P}_Y)}  
        &=
        \mathop{\mathrm{arg\,min}}_{u\in 
        L^\infty(\mathrm{supp}(\mathbb{P}_Y);
        C(\mathbb{R}^d;\mathbb{R}))}
        \mathbb{E}
        \bigg[ \Big| 
        u(Z_{N-(n+1)},Y_{0:{k}})
        -
        G_b
        \widetilde{\pi}_{k,n}
        (Z_{N-n},Y_{0:k})
        \Big|^2 \bigg],
        \\
        &\hspace{4em}
        k=0,\dots,K-1,
        \
        n=0,1,\dots,N-1,
        \\
        \widetilde{\pi}_{0,0}(x,y_{0}) 
        &= 
        q_0(x)L(y_0,x),
        \\
        \widetilde{\pi}_{k,0}(x,y_{0:k}) 
        &=
        \widetilde{\pi}_{k-1,N}(x,y_{0:k-1})
        L(y_k,x),
        \quad 
        k=1,\dots,K.
    \end{split}
    \end{align}
    Then $\widetilde{\pi}$ is well defined and moreover, for all $(k,n)\in\mathcal{I}_{K,N}$, and $y\in\mathrm{supp}(\mathbb{P}_Y)$, we have $\overline{\pi}_{k,n}(y_{0:k}) = \widetilde{\pi}_{k,n}(y_{0:k})$.
\end{proposition}

\begin{proof} 
    We remind the reader that $Y$ now represents a random variable with distribution $\mathbb{P}_{Y}$, and hence it is enough to show that $\overline{\pi}_{k,n}(\cdot,Y_{0:k}) = \widetilde{\pi}_{k,n}(\cdot,Y_{0:k})$ in $L^\infty(\Omega;\R)$.
    We proceed with the proof by using an induction argument.
    The base case $k=0$ and $n=0$ is trivially valid, since $\overline{\pi}_{0,0}(Y_0)=q_0L(Y_0)=\widetilde{\pi}_{0,0}(Y_0)$ by definition, and moreover $x\mapsto q_0(x)L(y,x)$ is continuous for every $y\in\mathbb Y$ and $(x,y)\mapsto q_0(x)L(y,x)$ is bounded.
    For the induction step, we fix $k\in\{0,\dots,K-1\}$, $n\in\{1,\dots,N\}$ and assume that $\widetilde{\pi}_{k,n-1}(Z_{N-(n-1)},Y_{0:k})= \overline{\pi}_{k,n-1}(Z_{N-(n-1)},Y_{0:k})\in L^\infty(\Omega;\R)$. 
    By definition, for a fixed $Y_{0:k}(\omega) = y_{0:k}$, we have
    \begin{align}
        \overline{\pi}_{k,n}
        (Z_{N-n},y_{0:k})
        =
        \E
        \Big[
        G_b
        \overline{\pi}_{k,n-1}
        (Z_{N-(n-1)},y_{0:k})
        \mid
        \mathfrak{S}(Z_{N-n})
        \Big].
    \end{align}
    The minimization problem \eqref{eq: second minimization} is solved by the conditional expectation
    \begin{align}
        &\widetilde{\pi}_{k,n}
        (Z_{N-n},Y_{0:k})
        =        
        \E
        \Big[
        G_b
        \widetilde{\pi}_{k,n-1}
        (Z_{N-(n-1)},Y_{0:k})
        \mid
        \mathfrak{S}
        (Z_{N-n},Y_{0:k})
        \Big].
    \end{align}  
    By the inductive assumption we have
    \begin{align}
        \widetilde{\pi}_{k,n}
        (Z_{N-n},Y_{0:k})
        &=
        \E
        \Big[
        G_b
        \overline{\pi}_{k,n-1}
        (Z_{N-(n-1)},Y_{0:k})
        \mid
        \mathfrak{S}(Z_{N-n},Y_{0:k})
        \Big]\\
        &=
        \E
        \Big[
        G_b
        \overline{\pi}_{k,n-1}
        (Z_{N-(n-1)},y_{0:k})
        \mid
        \mathfrak{S}(Z_{N-n})
        \Big]
        \Bigg|_{y_{0:k}=Y_{0:k}}\\
        &=
        \overline{\pi}_{k,n}(Z_{N-n},Y_{0:k}).
    \end{align}
    Since $x\mapsto\overline{\pi}_{k,n}(x,y)$ is continuous for all $y\in\mathbb Y$, then so is $\widetilde{\pi}_{k,n}$, as we have shown that they coincide for $y\in\mathrm{supp}(\mathbb{P}_Y)\subset \mathbb{Y}$.
\end{proof}
The next result is our second main result. It is a direct consequence of Theorem~\ref{thm: main theorem} and Proposition~\ref{prop: second minimization}.
\begin{corollary}
\label{corollary: pi-tilde convergence}
    Let $p_k$, $k=0,\dots,K$, be the solution to \eqref{eq: global Fokker--Planck with update} and $\widetilde{\pi}_{k,n}$, {$(k,n)\in\mathcal{I}_{K,N}$}, be the solution to \eqref{eq: 
    second minimization}.  
    If $b$ satisfies \eqref{eq: mu-bar choice}, then for $\tau \defeq \frac{T}{KN}\leq 1$, there exists $C\defeq C(T,\mu,\sigma,L,K) > 0 $, such that 
    \begin{align} \label{eq: corllary: pi-tilde convergence}
        \sup_{\substack{
              k=0,\dots,K \\
              n=0,\dots,N  
            }} 
        \big\|
        p_{k}(t_{k,n})
        -
        \widetilde{\pi}_{k,n}
        \big\|_{
        L^\infty(
        \mathrm{supp}(\mathbb{P}_Y);
        L^\infty(\R^d;\R))
        }
        \leq
        C
        \tau.
    \end{align}
\end{corollary}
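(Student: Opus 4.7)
The plan is to reduce the corollary immediately to Theorem~\ref{thm: main theorem} by invoking the pointwise identification established in Proposition~\ref{prop: second minimization}. No new estimates are needed; the corollary is a bookkeeping step that transports the convergence from the Feynman--Kac/Euler--Maruyama recursion $\overline{\pi}$ onto the optimization-based object $\widetilde{\pi}$ used in the actual algorithm.

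First, I would appeal to Proposition~\ref{prop: second minimization}, which asserts that, for every $(k,n)\in\mathcal{I}_{K,N}$ and every $y\in\supp(\mathbb{P}_Y)$, one has the pointwise identity $\widetilde{\pi}_{k,n}(\cdot,y_{0:k})=\overline{\pi}_{k,n}(\cdot,y_{0:k})$ as continuous functions on $\R^d$. Substituting this identity inside the norm on the left-hand side of \eqref{eq: corllary: pi-tilde convergence} gives
\begin{align*}
    \big\| p_{k}(t_{k,n}) - \widetilde{\pi}_{k,n} \big\|_{L^\infty(\supp(\mathbb{P}_Y);\, L^\infty(\R^d;\R))}
    =
    \big\| p_{k}(t_{k,n}) - \overline{\pi}_{k,n} \big\|_{L^\infty(\supp(\mathbb{P}_Y);\, L^\infty(\R^d;\R))}.
\end{align*}
Next, since $\supp(\mathbb{P}_Y) \subset \Y$ and $p_k$ and $\overline{\pi}_{k,n}$ admit representatives in $L^\infty(\Y;C(\R^d;\R))$ (by Propositions~\ref{prop:properties of p} and \ref{proposition: pi-streck smooth}), the supremum over $\supp(\mathbb{P}_Y)$ is dominated by the supremum over all of $\Y$, so the right-hand side above is at most $\|p_{k}(t_{k,n}) - \overline{\pi}_{k,n}\|_{L^\infty(\Y;\, L^\infty(\R^d;\R))}$. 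Finally, invoking Theorem~\ref{thm: main theorem} bounds this last quantity by $C\tau$ uniformly in $(k,n)\in\mathcal{I}_{K,N}$, which is exactly the claim.

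There is no genuine obstacle here: all the analytic effort sits in Theorem~\ref{thm: main theorem} (which in turn rests on Lemmas~\ref{lemma: pi convergence} and \ref{lemma: pi-streck convergence} and on the Malliavin-calculus arguments of Section~\ref{section: error analysis}) and in Proposition~\ref{prop: second minimization} (the orthogonal-projection characterization of the enlarged optimization problem). The corollary is a two-line consequence of combining these two ingredients with the trivial restriction inequality from $\Y$ to $\supp(\mathbb{P}_Y)$.
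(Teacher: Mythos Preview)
Your proof is correct and follows exactly the approach the paper intends: the paper simply states that the corollary is a direct consequence of Theorem~\ref{thm: main theorem} and Proposition~\ref{prop: second minimization}, and you have spelled out precisely that argument, including the trivial restriction inequality from $\Y$ to $\supp(\mathbb{P}_Y)$.
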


\subsection{Convergence of deep splitting approximation of the Fokker--Planck equation}
In this section we harvest the fruits of our analysis for the Fokker--Planck equation alone, disconnected from any filtering problem. For this purpose we take $K=1$, denote by a slight abuse of notation, $t_{k,n}$ by $t_n$ and notice that $0=t_0<t_1<\dots<t_N=T$ with uniform time step $\tau=T/N$. The reader can verify that all our proofs are valid for $L\equiv1$, corresponding to no update and only prediction. From Theorem~\ref{thm: main theorem}, Proposition~\ref{proposition: pi-streck smooth}, and Propositions~\ref{prop:properties of p} and \ref{prop: first minimization} we conclude the following theorem.
\begin{theorem}\label{thm:main3}
    Let $p\in C_{\mathrm{b}}^{1,\infty}([0,T]\times \R^d;\R)$ be the solution to the Fokker--Planck equation
    \begin{align*}
        p(t)
        =
        q_0
        +
        \int_0^t
          A^*
          p(s)
        \dd s,
        \quad
        t\in[0,T],
    \end{align*}
    and $\overline{\pi}_{n}\in C_{\mathrm{b}}^{\infty}(\R^d;\R)$, $n=0,\dots,N-1$, defined by the minimization problems 
    \begin{align} 
    \begin{split}
        (\overline{\pi}_{n+1}
        (x))_{x\in \mathbb{R}^d}
        =
        \mathop{\mathrm{arg\,min}}_{u\in C(\mathbb{R}^d;\mathbb{R})}
        \E
        \bigg[ \Big| 
        u(Z_{N-(n+1)})  
        -
        G_b
        \overline{\pi}_{n}
        (Z_{N-n})
        \Big|^2 \bigg]
        ,\quad
        n=0,\dots,N-1
        ,
    \end{split}
    \end{align}
    where $Z_n$, $n=0,\dots,N$, is the Euler--Maruyama approximation
    \begin{align}
    Z_{n+1} 
    = 
    Z_{n}
    + 
    b
    (Z_{n})
    (t_{n+1}-t_{n})
    +
    \sigma(Z_{n})
    (W_{t_{n+1}}-W_{t_{n}}),
    \quad n=0,\dots,N-1,
    \end{align}
    with $Z_0\sim \widetilde{q}_0$. If $b$ satisfies \eqref{eq: mu-bar choice}, then for $\tau \defeq \frac{T}{N}\leq 1$, there exists $C\defeq C(T,\mu,\sigma) > 0 $ such that
    \begin{align}
        \sup_{
              n=0,\dots,N  
            }
        \big\|
        p(t_{n})
        -
        \overline{\pi}_{n}
        \big\|_{
        L^\infty(\R^d;\R)
        }
        \leq
        C
        \tau.
    \end{align}
\end{theorem}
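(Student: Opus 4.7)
The plan is to observe that Theorem~\ref{thm:main3} is the specialization of Theorem~\ref{thm: main theorem} to $K=1$ and $L\equiv 1$, and then to verify that the two formulations of $\overline{\pi}_n$ (the Feynman--Kac recursion \eqref{eq: EM Feynman--Kac} used in Theorem~\ref{thm: main theorem} and the minimization formulation appearing in the statement of Theorem~\ref{thm:main3}) coincide in this regime.

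First, I would set $K=1$ so that the filtering partition $(t_k)_{k=0}^K$ degenerates to $\{0,T\}$, and thereby the sub-grid $t_{0,n}$ is simply the grid $t_n$ appearing in the theorem. With $L\equiv 1$, the initial condition in \eqref{eq: EM Feynman--Kac initial} reduces to $\overline{\pi}_{0,0}(x,y_0)=q_0(x)$, and there is no update at time $t_K=T$ because we do not evaluate at $n=0$ with $k=K$ (equivalently, because the multiplication by $L$ is trivial). Since $L\equiv 1$ belongs to $C_{\mathrm{b}}^\infty(\R^{d'}\times\R^d;\R)$, assumption~\eqref{assumption: coefficient bound and lipschitz} of Section~\ref{sec:setting} still holds, and the constants in the auxiliary Lemmas~\ref{lemma: pi convergence} and \ref{lemma: pi-streck convergence}, which are of the form $C(T,\mu,\sigma,L,K)$, collapse to $C(T,\mu,\sigma)$ as claimed.

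Next, I would identify the recursive objects. With $L\equiv 1$ and $K=1$, the Fokker--Planck system \eqref{eq: global Fokker--Planck with update} becomes the prediction-only equation $p(t)=q_0+\int_0^t A^* p(s)\dd s$, which matches the $p$ in Theorem~\ref{thm:main3} (in particular the regularity $p\in C_{\mathrm{b}}^{1,\infty}([0,T]\times\R^d;\R)$ follows from Proposition~\ref{prop:properties of p}). The Euler--Maruyama approximation $Z$ in \eqref{eq: Z EM} is exactly the one stated in Theorem~\ref{thm:main3}. Then, applying Proposition~\ref{prop: first minimization} (which does not rely on the presence of a non-trivial $L$) identifies the minimizers of the least-squares problems in Theorem~\ref{thm:main3} with the recursively defined $\overline{\pi}_{n+1}$ of \eqref{eq: EM Feynman--Kac}, with the $y$-dependence simply dropped.

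Finally, invoking Theorem~\ref{thm: main theorem} with these identifications yields
\begin{align*}
    \sup_{n=0,\dots,N}
    \|p(t_n)-\overline{\pi}_n\|_{L^\infty(\R^d;\R)}
    \leq C\tau,
\end{align*}
which is the desired bound. The only subtlety, and the place I would take most care, is checking that the essentially supremum over $\Y$ in Theorem~\ref{thm: main theorem} trivializes correctly here: since all objects are independent of $y$ when $L\equiv 1$, the $L^\infty(\Y;\cdot)$ norm reduces to the $L^\infty(\R^d;\R)$ norm, so no information is lost. No genuinely new estimates are needed; the proof is essentially a bookkeeping exercise once the specialization $K=1$, $L\equiv 1$ is made explicit.
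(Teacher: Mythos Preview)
Your proposal is correct and follows essentially the same approach as the paper, which simply states that the theorem follows from Theorem~\ref{thm: main theorem}, Proposition~\ref{proposition: pi-streck smooth}, and Propositions~\ref{prop:properties of p} and \ref{prop: first minimization} after specializing to $K=1$ and $L\equiv 1$. Your write-up is in fact more detailed than the paper's one-line justification; the only item you do not mention explicitly is Proposition~\ref{proposition: pi-streck smooth}, which supplies the $C_{\mathrm b}^\infty$ regularity of $\overline{\pi}_n$ asserted in the theorem statement.
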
   

\section{Convergence proofs}
\label{section: error analysis}
This section is devoted to the error analysis of the approximation scheme obtained in Proposition~\ref{prop: second minimization}. Section~\ref{section: preliminaries for the analysis} contains some preliminaries used for the analysis. In Section~\ref{sec: local convergence} we state and prove two auxiliary lemmas which are used in Section~\ref{section: main theorem} to prove Lemma~\ref{lemma: pi convergence} and Lemma~\ref{lemma: pi-streck convergence}.

\subsection{Preliminaries for the analysis}
\label{section: preliminaries for the analysis}
We remind the reader that throughout this section we assume the setting of Section~\ref{sec:setting} and we let $b$ be defined by \eqref{eq: mu-bar choice}. We recall that this choice of $b$ eliminates the first order term in $F_b$. For convenience in the later proofs we rewrite the operator $F_b$ by defining $f_0\colon\R^d\to\R$ 
\begin{align}
    f_0
    (x) 
    =&\,
    \frac{1}{2}\sum_{i,j=1}^d 
    \frac{\partial^2 a_{ij}(x)}{\partial x_i \partial x_j}
    -
    \sum_{i=1}^d 
    \frac{\partial \mu_i(x)}{\partial x_i}.
\end{align}
With this function, we can write $F_b$, acting on a function $\phi\in C^1(\R^d;\R)$, in the form
\begin{align}
    \label{eq: F1 F2 relation}
    (F_b\phi)(x)
    &
    =
    f_0(x)
    \phi(x)
    ,\quad
    x\in\R^d.
\end{align}
From Assumption~\eqref{assumption: coefficient bound and lipschitz} of Section~\ref{sec:setting}, Proposition~\ref{prop:properties of p}, and Proposition~\ref{proposition: pi-streck smooth}, we define bounding constants $C_\phi = \max_{|\alpha| \leq 4}\|\mathrm{D}^\alpha \phi\|_{L^\infty(\Y;L^\infty(\R^d;\R))}$ for $\phi = f_0,\, L$, and $\overline{\pi}_{k,n}$, for $(k,n)\in\mathcal{I}_{K,N}$.
Since $p_{k}(y)\in C_{\mathrm b}^{1,\infty}([t_{k,0},t_{k+1,0}]\times \R^d; \R))$ for all $y\in\Y$ and $k=0,\dots,K-1$, $p_k$ is Lipschitz in time, i.e., there exists a constant $C_{p}>0$ such that
\begin{align} \label{eq: lipschitz in time}
    \big\|
    p_k(t,y_{0:k})
    -
    p_k(s,y_{0:k})
    \big\|_{L^\infty(\R^d;\R)}
    \leq
    C_p
    |
    t
    -
    s
    |,
    \quad
    s,t\in [t_{k,0},t_{k+1,0}],\hspace{0.5em}
    k=0,\dots,K-1,\hspace{0.5em}
    y\in\Y,
\end{align}
where we note that $
    \|
    \phi
    \|_{L^\infty(\R^d;\R)}
    =
    \|
    \phi
    \|_{C(\R^d;\R)}
$ for $\phi\in C_{\mathrm{b}}(\R^d;\R)$.

\subsection{Local convergence} \label{sec: local convergence}
In preparation for the proofs of Lemma~\ref{lemma: pi convergence} and Lemma~\ref{lemma: pi-streck convergence}, we first state and prove two lemmas. 
All the estimations in the proofs hold uniformly with respect to $y\in\Y$, and when we need to emphasize the spatial dependence we write $p_k(t)(x)$ instead of $p_k(t,x,y)$.
\begin{lemma}
    \label{lemma: recursive lemma 1}
    Let $p_k$, $k=0,\dots,K$, be the solution to \eqref{eq: global Fokker--Planck with update} and $\pi_{k,n}$, {$(k,n)\in\mathcal{I}_{K,N}$}, be the solution to \eqref{eq: forward euler Feynman--Kac} and \eqref{eq: forwward euler Feynman--Kac initial}. There exist $C_3,C_4>0$, such that for all {$(k,n)\in\mathcal{I}_{K,N-1}$}, we have 
    \begin{align}
    \begin{split}
    \label{eq: recursive lemma 1}
        \|
        p_{k}
        (t_{k,n+1})
        -
        \pi_{k,n+1}
        \|_{L^\infty(\Y;L^\infty(\R^d;\R))}&
        \leq 
        C_3
        \tau^{
        2
        }
        +
        (1+C_4\tau)
        \|
        p_{k}
        (t_{k,n})
        -
        \pi_{k,n}
        \|_{L^\infty(\Y;L^\infty(\R^d;\R))}.
    \end{split}
    \end{align}
\end{lemma}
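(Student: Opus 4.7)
The plan is to subtract the Feynman--Kac identity of Proposition~\ref{prop: exact Feynman--Kac} from the forward Euler recursion \eqref{eq: forward euler Feynman--Kac}. Writing $X_s$ for $X_s^{t_N-t_{n+1},x}$ and suppressing the observation argument $y_{0:k}$, this yields
\begin{align}
p_k(t_{k,n+1},x) - \pi_{k,n+1}(x)
&= \E\big[p_k(t_{k,n}, X_{t_N-t_n}) - \pi_{k,n}(X_{t_N-t_n})\big] \\
&\quad + \E\Big[\int_{t_N-t_{n+1}}^{t_N-t_n} F p_k(t_{k,N}-s, X_s)\,\dd s - \tau F\pi_{k,n}(X_{t_N-t_n})\Big].
\end{align}
The first expectation is bounded in modulus by $\|p_k(t_{k,n}) - \pi_{k,n}\|_{L^\infty(\Y;L^\infty(\R^d;\R))}$ uniformly in $x$, so the real task is controlling the remaining bracket. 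I would add and subtract $\tau F p_k(t_{k,n}, X_{t_N-t_n})$ to split it into a pure time-discretization error $R_1$ at the exact solution and a remainder $R_2 \defeq \tau\,\E[F(p_k(t_{k,n}) - \pi_{k,n})(X_{t_N-t_n})]$ that involves the previous-step error itself.

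For $R_1$ the plan is to set $\Psi(r, x) \defeq F p_k(t_{k,N}-r, x)$ and apply It\^o's formula to $r \mapsto \Psi(r, X_r)$ on $[t_N - t_{n+1}, t_N - t_n]$. Iterating $\partial_t p_k = A^* p_k$ upgrades the regularity from Proposition~\ref{prop:properties of p} to bounded derivatives of all orders in both time and space, so $\partial_r \Psi + A\Psi$ is uniformly bounded (independently of $y$) and the stochastic integral vanishes in expectation. This gives $|\E[\Psi(s,X_s) - \Psi(t_N-t_n, X_{t_N-t_n})]| \le C(t_N - t_n - s) \le C\tau$, and integrating over $s\in[t_N - t_{n+1}, t_N - t_n]$ yields $|R_1| \le C_3 \tau^2$.

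The crux is $R_2$, where the $f_1\cdot \nabla \phi$ part of $F\phi$ with $\phi \defeq p_k(t_{k,n}) - \pi_{k,n}$ would naively only be controlled by $\|\nabla \phi\|_\infty$, and no recursive estimate for the gradient is available. This is precisely where the stochastic integration-by-parts bound of Lemma~\ref{lemma: new Malliavin gradient lemma} comes in: since $\phi \in C_{\mathrm b}^\infty(\R^d;\R)$ for each $y$ by Propositions~\ref{prop:properties of p} and~\ref{proposition: pi C-smooth}, the lemma yields
\begin{align}
\big|\E[f_1(X_{t_N-t_n}) \cdot \nabla \phi(X_{t_N-t_n})]\big| \le C_\M \E\big[|\phi(X_{t_N-t_n})|^2\big]^{\frac{1}{2}} \le C_\M \|\phi\|_{L^\infty(\Y;L^\infty(\R^d;\R))},
\end{align}
while the $f_0\phi$ part is trivially bounded by $C_{f_0} \|\phi\|_{L^\infty(\Y;L^\infty(\R^d;\R))}$. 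Together these produce $|R_2| \le C_4 \tau \|p_k(t_{k,n}) - \pi_{k,n}\|_{L^\infty(\Y;L^\infty(\R^d;\R))}$. Combining with the $R_1$ estimate and the leading term, and taking $\sup_{x,y}$ (all constants being independent of $y$), delivers \eqref{eq: recursive lemma 1}. The loss of a derivative in $R_2$ is the main obstacle, already disarmed by the Malliavin lemma.
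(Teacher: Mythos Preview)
Your proof is correct and follows the paper's decomposition exactly: the paper writes $\text{I}+\text{II}$ for your leading term plus bracket, and then splits $\text{II}$ into $\text{II}_1+\text{II}_2$, which are your $R_1$ and $R_2$. Your treatment of $R_2$ via Lemma~\ref{lemma: new Malliavin gradient lemma} is identical to the paper's treatment of $\text{II}_2$.

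The one genuine difference is in $R_1=\text{II}_1$. The paper inserts a further intermediate term $Fp_k(t_{k,n})(X_s)$ and splits into a time error $\text{II}_{1,1}$ (same spatial argument $X_s$, different time arguments) and a space error $\text{II}_{1,2}$ (same time $t_{k,n}$, different spatial arguments $X_s$ vs.\ $X_{t_N-t_n}$); $\text{II}_{1,1}$ is then bounded using a \emph{second} application of the Malliavin lemma together with the time-Lipschitz constant $C_p$ from \eqref{eq: lipschitz in time}, while $\text{II}_{1,2}$ uses It\^o on the time-frozen function $Fp_k(t_{k,n})(\cdot)$. Your route applies It\^o once to the full time-dependent $\Psi(r,x)=Fp_k(t_{k,N}-r,x)$, which handles both effects simultaneously and avoids the second Malliavin step. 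This is legitimate: bootstrapping $\partial_t p_k=A^*p_k$ from Proposition~\ref{prop:properties of p} gives $\partial_t p_k(t)\in C_{\mathrm b}^\infty(\R^d;\R)$ uniformly in $t$ and $y$, so $\Psi\in C^{1,2}$ with $\partial_r\Psi+A\Psi$ uniformly bounded and the stochastic integral a true martingale. Your argument is therefore a modest streamlining of the paper's; the paper's extra split, on the other hand, isolates the contribution of the time-Lipschitz constant $C_p$ explicitly.
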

\begin{proof}
Let $y\in\Y$, and {$(k,n)\in\mathcal{I}_{K,N-1}$} be fixed, and let $y$ be hidden in the notation.
We begin by recalling, from Proposition~\ref{prop: exact Feynman--Kac} and \eqref{eq: forward euler Feynman--Kac}, the Feynman--Kac representations 
\begin{align}
    p_{k}(t_{k,n+1})
    (x)
    =
    \E
    \Big[
    p_{k}(t_{k,n})
    (X_{t_{N}-t_{n}}^{t_N-t_{n+1},x})
    +
    \int_{t_N-t_{n+1}}^{t_N-t_n} 
    F_b
    p_{k}
    (t_{k,N}-s)
    (X_s^{t_N-t_{n+1},x})
    \dd s
    \Big]
\end{align}
and
\begin{align}
    \pi_{k,n+1}
    (x)
    =
    \E
    \Big[
    G_b
    \pi_{k,n}
    (X_{t_{N}-t_{n}}^{t_N-t_{n+1},x})
    \Big]
    =
    \E
    \Big[
    \pi_{k,n}
    (X_{t_{N}-t_{n}}^{t_N-t_{n+1},x})
    +
    \tau
    F_b
    \pi_{k,n}
    (X_{t_{N}-t_{n}}^{t_N-t_{n+1},x})
    \Big]
    .
\end{align}
Using these expressions and the triangle inequality we get 
\begin{align}
 \begin{split}
    \|
    p_{k}(t_{k,n+1})
    &
    - 
    {\pi}_{k,n+1}
    \|_{L^\infty(\R^d;\R)}
    \leq
    \sup_{x\in\R^d}
    \Big|
    \E
    \Big[
    p_{k}(t_{k,n})
    (X_{t_{N}-t_{n}}^{t_N-t_{n+1},x})
    -
    \pi_{k,n}
    (X_{t_{N}-t_{n}}^{t_N-t_{n+1},x})
    \Big]
    \Big|
    \\
    &\quad +
    \sup_{x\in\R^d}
    \Big|
    \E
    \Big[
    \int_{t_N-t_{n+1}}^{t_N-t_n} 
    F_b
    p_{k}
    (t_{k,N}-s)
    (X_s^{t_N-t_{n+1},x})
    \dd s
    -
    \tau
    F_b
    \pi_{k,n}
    (X_{t_{N}-t_{n}}^{t_N-t_{n+1},x})
    \Big]
    \Big|
    =
    \text{I}
    +
    \text{II}
    .
    \end{split}
\end{align}
For the first term we use the triangle inequality and take a supremum over $\R^d$ inside the expectation to obtain
\begin{align} \label{proof eq: recursive inequality}
\begin{split}
    \text{I}
    \leq
    \sup_{x\in\R^d}
    \E
    \bigg[
    \Big|
    p_{k}(t_{k,n})
    (X_{t_{N}-t_{n}}^{t_N-t_{n+1},x})
    -
    \pi_{k,n}
    (X_{t_{N}-t_{n}}^{t_N-t_{n+1},x})
    \Big|
    \bigg]
    &\leq
    \sup_{x\in\R^d}
    \E
    \bigg[
    \sup_{z\in\R^d}
    \Big|
    p_{k}(t_{k,n})
    (z)
    -
    \pi_{k,n}
    (z)
    \Big|
    \bigg]
    \\
    =
    \sup_{x\in\R^d}
    \Big|
    p_{k}(t_{k,n})
    (x)
    -
    \pi_{k,n}
    (x)
    \Big|
    &\leq
    \big\|
    p_{k}(t_{k,n})
    -
    \pi_{k,n}
    \big\|_{L^\infty(\Y;L^{\infty}(\R^d;\R))}.
\end{split}
\end{align}
This fits the form of the recursive bound in Lemma~\ref{lemma: recursive lemma 1}. The second term is handled by adding and subtracting $F_bp_{k}(t_{k,n})(X_{t_{N}-t_{n}}^{t_N-t_{n+1},x})\tau$, and applying the triangle inequality to obtain 
\begin{align*}
    \text{II}
    &\leq
    \sup_{x\in\R^d}
    \Big|
    \E
    \Big[
    \int_{t_N-t_{n+1}}^{t_N-t_n} 
    F_b
    p_{k}
    (t_{k,N}-s)
    (X_s^{t_N-t_{n+1},x})
    \dd s
    -
    \tau
    F_b
    p_{k}
    (t_{k,n})
    (X_{t_{N}-t_{n}}^{t_N-t_{n+1},x})
    \Big]
    \Big|
    \\
    &
    \hspace{8em}+
    \sup_{x\in\R^d}
    \Big|
    \E
    \Big[
    \tau
    F_b
    p_{k}
    (t_{k,n})
    (X_{t_{N}-t_{n}}^{t_N-t_{n+1},x})
    -
    \tau
    F_b
    \pi_{k,n}
    (X_{t_{N}-t_{n}}^{t_N-t_{n+1},x})
    \Big]
    \Big|
    =\,
    \text{II}_1
    +
    \text{II}_2
    .
\end{align*}
By substituting $F_b$ as in \eqref{eq: F1 F2 relation}, applying Hölder's inequality, and using the bound $C_{f_0}$, we see that
\begin{align}
    \text{II}_2
    &=
    \tau
    \sup_{x\in\R^d}
    \Big|
    \E
    \Big[
    f_0
    (X_{t_{N}-t_{n}}^{t_N-t_{n+1},x})
    (
    p_{k}
    (t_{k,n})
    -
    \pi_{k,n}
    )
    (X_{t_{N}-t_{n}}^{t_N-t_{n+1},x})
    \Big]
    \Big|
    \\
    &\leq
    C_{f_0}
    \tau
    \sup_{x\in\R^d}
    \E
    \bigg[
    \Big|
    p_{k}(t_{k,n})
    (X_{t_{N}-t_{n}}^{t_N-t_{n+1},x})
    -
    \pi_{k,n}
    (X_{t_{N}-t_{n}}^{t_N-t_{n+1},x})
    \Big|
    \bigg].    
\end{align}
We follow the steps in \eqref{proof eq: recursive inequality} and obtain
\begin{align}
    \text{II}_2
    \leq&\,
    C_{f_0}
    \tau
    \big\|
    p_{k}(t_{k,n})
    -
    \pi_{k,n}
    \big\|_{L^\infty(\Y;L^{\infty}(\R^d;\R))}.
\end{align}
This is of the form required in Lemma~\ref{lemma: recursive lemma 1}. 

It remains to show $\text{II}_1\leq C\tau^{2}$. By the linearity of the integral we have
\begin{align}
    \text{II}_1
    =
    &\,
    \sup_{x\in\R^d}
    \bigg|
    \E
    \Big[
    \int_{t_N-t_{n+1}}^{t_N-t_n}
    \big(
    F_b
    p_{k}
    (t_{k,N}-s)
    (X_s^{t_N-t_{n+1},x})
    -
    F_b
    p_{k}
    (t_{k,n})
    (X_{t_{N}-t_{n}}^{t_N-t_{n+1},x})
    \big)
    \dd s
    \Big]
    \bigg|.
\end{align}
By adding and subtracting $F_bp_{k}(t_{k,n})(X_{s}^{t_N-t_{n+1},x})$ and applying the triangle inequality we obtain
\begin{align}
    \text{II}_1
    &
    \leq
    \sup_{x\in\R^d}
    \bigg|
    \E
    \Big[
    \int_{t_N-t_{n+1}}^{t_N-t_n} 
    \big(
    F_b
    p_{k}
    (t_{k,N}-s)
    (X_s^{t_N-t_{n+1},x})
    -
    F_b
    p_{k}
    (t_{k,n})
    (X_s^{t_N-t_{n+1},x})
    \big)
    \dd s
    \Big]
    \bigg|
    \\
    &\hspace{1em}
    +
    \sup_{x\in\R^d}
    \bigg|
    \E
    \Big[
    \int_{t_N-t_{n+1}}^{t_N-t_n}
    \big(
    F_b
    p_{k}
    (t_{k,n})
    (X_s^{t_N-t_{n+1},x})
    -
    F_b
    p_{k}
    (t_{k,n})
    (X_{t_{N}-t_{n}}^{t_N-t_{n+1},x})
    \big)
    \dd s
    \Big]
    \bigg|
    =
    \text{II}_{1,1}
    +
    \text{II}_{1,2}
    .
\end{align}
We substitute $F_b$ as in \eqref{eq: F1 F2 relation} and use Fubini--Tonelli theorem to observe that
\begin{align}
    \text{II}_{1,1} 
    =&
    \sup_{x\in\R^d}
    \bigg|
    \E
    \Big[
    \int_{t_N-t_{n+1}}^{t_N-t_n} 
    \big(
    f_0
    (X_s^{t_N-t_{n+1},x})
    (
    p_{k}
    (t_{k,N}-s)
    -
    p_{k}
    (t_{k,n})
    )
    (X_s^{t_N-t_{n+1},x})
    \big)
    \dd s
    \Big]
    \bigg|
    \\
    =&
    \sup_{x\in\R^d}
    \bigg|
    \int_{t_N-t_{n+1}}^{t_N-t_n} 
    \E
    \Big[
    f_0
    (X_s^{t_N-t_{n+1},x})
    (
    p_{k}
    (t_{k,N}-s)
    -
    p_{k}
    (t_{k,n})
    )
    (X_s^{t_N-t_{n+1},x})
    \Big]
    \dd s
    \bigg|.
\end{align}
The condition of absolute integrability of the integrand to use Fubini--Tonelli can be verified by using the Cauchy--Schwarz inequality, bound on $f_0$, and Lipschitz property of $p$. 
In the next step we use the bound $C_{f_0}$. Hence, we obtain
\begin{align}\label{proof eq: post semigroup bound}
\begin{split}
    \text{II}_{1,1}
    &\leq
    C_{f_0}
    \sup_{x\in\R^d}
    \bigg(
    \int_{t_N-t_{n+1}}^{t_N-t_n} 
    \E
    \Big[
    \big|
    (
    p_{k}
    (t_{k,N}-s)
    -
    p_{k}
    (t_{k,n})
    )
    (X_s^{t_N-t_{n+1},x})
    \big|^2
    \Big]^\frac{1}{2}
    \dd s
    \bigg).
\end{split}
\end{align}
By a uniform bound and substitution in the time variable we get
\begin{align}
    &
    \int_{t_N-t_{n+1}}^{t_N-t_n} 
    \E
    \Big[
    \big|
    (
    p_{k}
    (t_{k,N}-s)
    -
    p_{k}
    (t_{k,n})
    )
    (X_s^{t_N-t_{n+1},x})
    \big|^2
    \Big]^\frac{1}{2}
    \dd s
    \\
    &\hspace{1em}\leq
    \int_{t_N-t_{n+1}}^{t_N-t_n}
    \big\|
    p_{k}
    (t_{k,N}-s)
    -
    p_{k}
    (t_{k,n})
    \big\|_{L^\infty(\R^d;\R)}
    \dd s
    =
    \int_{t_{k,n}}^{t_{k,n+1}}
    \big\|
    p_{k}
    (s)
    -
    p_{k}
    (t_{k,n})
    \big\|_{L^\infty(\R^d;\R)}
    \dd s \label{proof eq: before mild solution}.
\end{align}
Recalling the Lipschitz continuity in time \eqref{eq: lipschitz in time} we obtain
\begin{align}
    \int_{t_{k,n}}^{t_{k,n+1}}
    \big\|
    p_{k}
    (s)
    -
    p_{k}
    (t_{k,n})
    \big\|_{L^\infty(\R^d;\R)}
    \dd s
    \leq
    C_p
    \int_{t_{k,n}}^{t_{k,n+1}}
    (
    s
    -
    t_{k,n}
    )
    \dd s
    =
    \frac{1}{2}
    C_p 
    \tau^2.
\end{align}
Inserting this in \eqref{proof eq: post semigroup bound} we get
\begin{align}
    \text{II}_{1,1}
    &\leq
    \frac{1}{2}
    C_{f_0}
    C_p 
    \tau^2.
\end{align}
For the term $\text{II}_{1,2}$, we apply Itô's formula to obtain
\begin{align} \label{proof eq: RHS ito on Fp}
\begin{split}
    F_b
    p_{k}
    (t_{k,n})
    (X_s^{t_N-t_{n+1},x})
    -
    F_b
    p_{k}
    (t_{k,n})
    (X_{t_{N}-t_{n}}^{t_N-t_{n+1},x})
    =&\,
    -
    \int_s^{t_N-t_n}
    A_b
    (F_b
    p_k(t_{k,n}))
    (X_r^{t_N-t_{n+1},x})
    \dd r
    \\
    &\hspace{-7em}
    -
    \int_s^{t_{N}-t_{n}}
    \langle
    \nabla
    (F_b
    p_{k}
    (t_{k,n}))
    (X_r^{t_N-t_{n+1},x})
    ,
    \sigma
    (X_r^{t_N-t_{n+1},x})
    \dd W_r
    \rangle.
\end{split}
\end{align}
We note that the second term is a square integrable martingale with respect to $\mathcal{F}_{s}$, since the integrand is uniformly bounded, and hence it vanishes under the conditional expectation. We apply the Fubini--Tonelli theorem to change the order of the conditional expectation and time integral in $\text{II}_{1,2}$, and then insert the right hand side of \eqref{proof eq: RHS ito on Fp} to get
\begin{align}
    \text{II}_{1,2}
    &=
    \sup_{x\in\R^d}
    \bigg|
    \int_{t_N-t_{n+1}}^{t_N-t_n}
    \E
    \Big[
    \big(
    F_b
    p_{k}
    (t_{k,n})
    (X_s^{t_N-t_{n+1},x})
    -
    F_b
    p_{k}
    (t_{k,n})
    (X_{t_{N}-t_{n}}^{t_N-t_{n+1},x})
    \big)
    \Big]
    \dd s
    \bigg|
    \\
    &=
    \sup_{x\in\R^d}
    \bigg|
    \int_{t_N-t_{n+1}}^{t_N-t_n}
    \E
    \Big[
    \int_s^{t_N-t_n}
    A_b
    (F_b
    p_k(t_{k,n}))
    (X_r^{t_N-t_{n+1},x})
    \dd r
    \Big]
    \dd s
    \bigg|
    .
\end{align}
We recall that $p_k(t_{k,n})(y)\in C^\infty_{\mathrm{b}}(\R^d;\R)$ uniformly with respect to $y\in\Y$, hence $A_b(F_bp_k(t_{k,n}))$ is bounded by some constant $C> 0$ depending on $\mu$, $\sigma$ and derivatives of $p$ of order up to two, since $F_b$ is of order zero. We use this bounding constant to obtain
\begin{align}
    \text{II}_{1,2}
    \leq
    \frac{1}{2}
    C
    \tau^2
    .
\end{align}
Thus, we have shown that $\text{II}_{1,2}$, and hence $\text{II}_1$, is bounded as required. Since the obtained bounds are uniform in $y$, we can finish by taking supremum over $y\in\Y$ on the left hand side to obtain \eqref{eq: recursive lemma 1}.
\end{proof}

\begin{lemma}
    \label{lemma: recursive lemma 2}
    Let $\pi_{k,n}$, {$(k,n)\in\mathcal{I}_{K,N}$}, be the solution to \eqref{eq: forward euler Feynman--Kac} and \eqref{eq: forwward euler Feynman--Kac initial}, and $\overline{\pi}_{k,n}$, {$(k,n)\in\mathcal{I}_{K,N}$}, be the solution to \eqref{eq: EM Feynman--Kac} and \eqref{eq: EM Feynman--Kac initial}. There exist $C_5,C_6>0$, such that for all {$(k,n)\in\mathcal{I}_{K,N-1}$}, we have
    \begin{align}
    \begin{split}
    \label{eq: recursive lemma 2}
        \|
        \pi_{k,n+1}
        -
        \overline{\pi}_{k,n+1}
        \|_{L^\infty(\Y;L^\infty(\R^d;\R))}&
        \leq 
        C_5
        \tau^{2}
        +
        (1+C_6\tau)
        \|
        \pi_{k,n}
        -
        \overline{\pi}_{k,n}
        \|_{L^\infty(\Y;L^\infty(\R^d;\R))}.
    \end{split}
    \end{align}
\end{lemma}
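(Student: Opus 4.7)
The strategy parallels the proof of Lemma~\ref{lemma: recursive lemma 1}, with the one-step temporal discretization error now replaced by the one-step weak Euler--Maruyama error between $X$ and $Z$. Fix $y\in\Y$ and write $X\defeq X_{t_{N}-t_{n}}^{t_N-t_{n+1},x}$ and $Z\defeq Z_{N-n}^{t_N-t_{n+1},x}$. Inserting and subtracting $\E[G\overline{\pi}_{k,n}(X)]$ in the representations \eqref{eq: forward euler Feynman--Kac} and \eqref{eq: EM Feynman--Kac} gives
\begin{align*}
    \pi_{k,n+1}(x)-\overline{\pi}_{k,n+1}(x)
    =
    \E\big[G(\pi_{k,n}-\overline{\pi}_{k,n})(X)\big]
    +
    \E\big[G\overline{\pi}_{k,n}(X)-G\overline{\pi}_{k,n}(Z)\big]
    =\mathrm{I}+\mathrm{II}.
\end{align*}
This splitting is preferred over the symmetric one because the inductive quantity $\pi_{k,n}-\overline{\pi}_{k,n}$ is then evaluated along the continuous process $X$, where Lemma~\ref{lemma: new Malliavin gradient lemma} applies directly.

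For $\mathrm{I}$, I would expand $G=I+\tau F$ using \eqref{eq:G} and \eqref{eq: F1 F2 relation} to obtain the three summands $\E[(\pi_{k,n}-\overline{\pi}_{k,n})(X)]$, $\tau\E[f_0(X)(\pi_{k,n}-\overline{\pi}_{k,n})(X)]$ and $\tau\E[f_1(X)\cdot\nabla(\pi_{k,n}-\overline{\pi}_{k,n})(X)]$. The first two are bounded by $\|\pi_{k,n}-\overline{\pi}_{k,n}\|_{L^\infty(\Y;L^\infty(\R^d;\R))}$ times $1$ and $C_{f_0}\tau$ respectively, via the supremum-inside-expectation trick of \eqref{proof eq: recursive inequality}. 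For the gradient contribution, Lemma~\ref{lemma: new Malliavin gradient lemma} applies with $\phi=\pi_{k,n}-\overline{\pi}_{k,n}\in C_{\mathrm b}^\infty(\R^d;\R)$, which is smooth and bounded thanks to Propositions~\ref{proposition: pi C-smooth} and \ref{proposition: pi-streck smooth}, yielding a bound proportional to $\|\pi_{k,n}-\overline{\pi}_{k,n}\|_{L^\infty(\Y;L^\infty(\R^d;\R))}$. Collecting the three estimates produces $|\mathrm{I}|\leq(1+C_6\tau)\|\pi_{k,n}-\overline{\pi}_{k,n}\|_{L^\infty(\Y;L^\infty(\R^d;\R))}$.

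For $\mathrm{II}$, I would split $G\overline{\pi}_{k,n}=\overline{\pi}_{k,n}+\tau F\overline{\pi}_{k,n}$ and invoke the classical one-step weak Euler--Maruyama error estimate: for any $\psi\in C_{\mathrm b}^\infty(\R^d;\R)$ with derivatives bounded uniformly in $(k,n,y)$,
\begin{align*}
    \big|\E[\psi(X)]-\E[\psi(Z)]\big|\leq C\tau^2,
\end{align*}
where $C$ depends only on $\mu,\sigma$ and the first four derivatives of $\psi$. This is the standard matching-of-moments argument: Itô's formula yields $\E[\psi(X)]=\psi(x)+\tau A\psi(x)+O(\tau^2)$, while the Gaussian Taylor expansion of $\psi(x+\mu(x)\tau+\sigma(x)\Delta W)$ yields $\E[\psi(Z)]=\psi(x)+\tau A\psi(x)+O(\tau^2)$, so the $O(1)$ and $O(\tau)$ terms cancel and only $O(\tau^2)$ remainders survive (cf.~\cite[Ch.~14]{Kloeden_Platen}). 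Applied to $\psi=\overline{\pi}_{k,n}$ this gives $O(\tau^2)$, and the prefactor $\tau$ in front of $F\overline{\pi}_{k,n}$ gives $O(\tau^3)$, so $|\mathrm{II}|\leq C_5\tau^2$. Taking the supremum over $x$ and $y$ and combining with the bound on $\mathrm{I}$ produces \eqref{eq: recursive lemma 2}.

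The main technical point is to ensure that the constants in the weak error estimate are uniform in $k$, $n$, and $y$. This reduces to uniform control of four derivatives of $\overline{\pi}_{k,n}$ and of $F\overline{\pi}_{k,n}$, which is precisely what the uniform bounding constants $C_{\overline{\pi}_{k,n}}$ declared in Section~\ref{section: preliminaries for the analysis} provide. No new Malliavin-type integration by parts against the Euler--Maruyama distribution is required, since the gradient of the error is absorbed entirely into term $\mathrm{I}$ via the continuous process $X$.
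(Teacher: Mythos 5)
Your proof is correct and takes essentially the same route as the paper: the identical decomposition (adding and subtracting $\E[G\overline{\pi}_{k,n}(X)]$), the same treatment of term $\mathrm{I}$ by splitting $G=I+\tau F$ and applying Lemma~\ref{lemma: new Malliavin gradient lemma} to $\phi=\pi_{k,n}-\overline{\pi}_{k,n}$, and the same identification of term $\mathrm{II}$ as a one-step weak Euler--Maruyama error of order $O(\tau^2)$. The only cosmetic difference is that you sketch the moment-matching argument for the weak error explicitly, where the paper simply cites a reference.
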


\begin{proof}
We start by fixing $y\in\Y$, {$(k,n)\in\mathcal{I}_{K,N-1}$} and show that the convergence rate is independent of $y$. Recalling that the solution $\pi$ to \eqref{eq: forward euler Feynman--Kac} and the solution $\overline{\pi}$ to \eqref{eq: EM Feynman--Kac} satisfy the recursive relations
\begin{align}
    \pi_{k,n+1}
    (x)
    &=
    \E
    \big[
    (G_b
    \pi_{k,n}
    )
    (X_{t_{N}-t_{n}}^{t_N-t_{n+1},x})
    \big],
    \\
    \overline{\pi}_{k,n+1}
    (x)
    &=
    \E
    \big[
    (G_b
    \overline{\pi}_{k,n}
    )
    (Z_{N-n}^{t_N-t_{n+1},x})
    \big],
\end{align}
we obtain
\begin{align}
    &\sup_{x\in\R^d}
    \Big|
    \pi_{k,n+1}(x)
    -
    \overline{\pi}_{k,n+1}(x)
    \Big|
    =
    \sup_{x\in\R^d}
    \Big|
    \E
    \Big[
    (G_b
    \pi_{k,n})
    \big(
    X_{t_N-t_{n}}^{t_N-t_{n+1},x}
    \big)
    -
    (G_b
    \overline{\pi}_{k,n+1})
    \big(
    Z_{N-n}^{t_N-t_{n+1},x}
    \big)
    \Big]
    \Big|.
\end{align}
We add and subtract $(G_b\overline{\pi}_{k,n})(X_{t_N-t_n}^{t_N-t_{n+1},x})$ and apply the triangle inequality to get
\begin{align}
    \sup_{x\in\R^d}
    \Big|
    \pi_{k,n+1}(x)
    -
    \overline{\pi}_{k,n+1}(x)
    \Big|
    \leq&\,
    \sup_{x\in\R^d}
    \Big|
    \E
    \big[
    (G_b
    {\pi}_{k,n})
    (X_{t_N-t_{n}}^{t_N-t_{n+1},x})
    -
    (G_b
    \overline{\pi}_{k,n})
    (X_{t_N-t_{n}}^{t_N-t_{n+1},x})
    \big]
    \Big|
    \\
    &\hspace{0em}+
    \sup_{x\in\R^d}
    \Big|
    \E
    \big[
    (G_b
    \overline{\pi}_{k,n})
    (X_{t_N-t_{n}}^{t_N-t_{n+1},x})
    -
    (G_b
    \overline{\pi}_{k,n})
    (Z_{N-n}^{t_N-t_{n+1},x})
    \big]
    \Big|
    =\;
    \text{I}
    +
    \text{II}
    .
\end{align}
By the definition \eqref{eq:G} of $G_b$ and the triangle inequality we obtain
\begin{align}
    \text{I}
    \leq&\,
    \sup_{x\in\R^d}
    \Big|
    \E
    \big[
    {\pi}_{k,n}
    (X_{t_N-t_{n}}^{t_N-t_{n+1},x})
    -
    \overline{\pi}_{k,n}
    (X_{t_N-t_{n}}^{t_N-t_{n+1},x})
    \big]
    \Big|
    \\
    &+
    \sup_{x\in\R^d}
    \Big|
    \E
    \big[
    \tau
    \big(
    F_b
    {\pi}_{k,n}
    (X_{t_N-t_{n}}^{t_N-t_{n+1},x})
    -
    F_b
    \overline{\pi}_{k,n}
    (X_{t_N-t_{n}}^{t_N-t_{n+1},x})
    \big)
    \big]
    \Big|
    =\,
    \text{I}_1
    +\,
    \text{I}_2
    .
\end{align}
Applying steps analogous to \eqref{proof eq: recursive inequality}, we obtain
\begin{align}
    \label{proof eq: use max norm 1}
    \text{I}_1
    \leq\,
    \sup_{x\in\R^d}
    \E
    \Big[
    \big|
    {\pi}_{k,n}
    (X_{t_N-t_{n}}^{t_N-t_{n+1},x})
    -
    \overline{\pi}_{k,n}
    (X_{t_N-t_{n}}^{t_N-t_{n+1},x})
    \big|
    \Big]
    &\leq
    \|
    {\pi}_{k,n}
    -
    \overline{\pi}_{k,n}
    \|_{L^\infty(\Y;L^{\infty}(\R^d;\R))}.
\end{align}
This is one of the desired terms in the bound in Lemma~\ref{lemma: recursive lemma 2}. For the second term $\text{I}_2$, we want to show
\begin{align}
    \text{I}_{2}
    \leq 
    C
    \tau
    \|
    {\pi}_{k,n}
    -
    \overline{\pi}_{k,n}
    \|_{L^\infty(\Y;L^{\infty}(\R^d;\R))}.
\end{align}
We proceed by substituting $F_b$ as in \eqref{eq: F1 F2 relation}, applying the Cauchy--Schwarz inequality, and using the bound $C_{f_0}$.
This gives
\begin{align}
    \text{I}_{2}
    \leq&\,
    \tau
    \sup_{x\in\R^d}
    \E
    \Big[
    \big|
    f_0(X_{t_N-t_{n}}^{t_N-t_{n+1},x})
    \big|^2
    \Big]^{\frac{1}{2}}
    \E
    \Big[
    \big|
    {\pi}_{k,n}
    (X_{t_N-t_{n}}^{t_N-t_{n+1},x})
    -
    \overline{\pi}_{k,n}
    (X_{t_N-t_{n}}^{t_N-t_{n+1},x})
    \big|^2
    \Big]^{\frac{1}{2}}
    \\
    \leq&\,
    C_{f_0}
    \tau
    \sup_{x\in\R^d}
    \E
    \Big[
    \big|
    {\pi}_{k,n}
    (X_{t_N-t_{n}}^{t_N-t_{n+1},x})
    -
    \overline{\pi}_{k,n}
    (X_{t_N-t_{n}}^{t_N-t_{n+1},x})
    \big|^2
    \Big]^{\frac{1}{2}}.
\end{align}
Finally, using the inequality in \eqref{proof eq: use max norm 1}, we get
\begin{align} \label{proof eq: I_2}
    &
    \text{I}_{2}
    \leq
    C_{f_0}
    \tau
    \|
    {\pi}_{k,n}
    -
    \overline{\pi}_{k,n}
    \|_{L^\infty(\Y;L^{\infty}(\R^d;\R))}
    .
\end{align}
Thus, we have shown 
\begin{align}
    I
    \leq
    \big(
    1
    +
    C_{f_0}
    \tau
    \big)
    \|
    {\pi}_{k,n}
    -
    \overline{\pi}_{k,n}
    \|_{L^\infty(\Y;L^{\infty}(\R^d;\R))}.
\end{align}
The term II is the weak error for the Euler--Maruyama approximation of $X$ with test function $G_b\overline{\pi}_{k,n}$. Compared to the standard setting the error is now only over one time step (from $t_N-t_{n+1}$ to $t_N-t_n$) and it is therefore $O(\tau^2)$ instead of $O(\tau)$. We omit the proof which can be found in, 
e.g., \cite[Theorem 5.3.1]{gobet2016monte}. For a constant $C>0$ that depends on $\mu$, $\sigma$ and $C_{\overline{\pi}}$ we get
\begin{align}
    \text{II}
    \leq
    C
    \tau^{2}.
\end{align}
We conclude the proof by taking supremum over $y\in\Y$ on the left hand side.
\end{proof}
\subsection{Global convergence}
\label{section: main theorem}
This section is devoted to proving the strong convergence in Lemma~\ref{lemma: pi convergence}. The arguments in the following proof can be adapted, using Lemma~\ref{lemma: recursive lemma 2}, to prove Lemma~\ref{lemma: pi-streck convergence}. 

\begin{proof}[Proof of Lemma~\ref{lemma: pi convergence}]
We define $e_{k,n}:=\|p_k(t_{k,n})-\pi_{k,n}\|_{L^\infty(\Y;L^\infty(\R^d;\R))}$, so that Lemma~\ref{lemma: recursive lemma 1} can be summarized, with this notation, as
\begin{align}\label{eq:calc1}
  e_{k,n}
  \leq
  C_3 \tau^{2}
  +
  (1+C_4\tau)e_{k,n-1},
  \quad
  k=0,\dots,K-1,\ 
  n=1,\dots,N.
\end{align}
We next prove a bound at the update time $t_{k,0}$, where we have 
\begin{align}
    p_{k}(t_{k,0},y_{0:k}) 
    -
    \pi_{k,0}(y_{0:k})
    &=
    \big(
    p_{k-1}(t_{k,0},y_{0:k-1})
    -
    \pi_{k-1,N}(y_{0:k-1})
    \big)
    L(y_k).
\end{align}
Using the fact that $t_{k,0} = t_{k-1,N}$ we get
\begin{align}
    &\|
    p_k(t_{k,0},y_{0:k})
    -
    \pi_{k,0}(y_{0:k})
    \|_{L^\infty(\R^d;\R)}
    \leq
    \|
    L(y_k)
    \|_{L^\infty(\R^d;\R)}
    \|
    p_{k-1}(t_{k-1,N},y_{0:k-1})
    -
    \pi_{k-1,N}(y_{0:k-1})
    \|_{L^\infty(\R^d;\R)}.
\end{align}
We take supremum over $\mathbb{Y}$ and use the uniform bound on $L$, to obtain
\begin{align}\label{eq:calc2}
\begin{split}
    e_{k,0}
    &\leq
    C_L
    e_{k-1,N}.
\end{split}
\end{align}
Inequalities \eqref{eq:calc1} and \eqref{eq:calc2} are next used to complete the proof and we begin by fixing {$(k,n)\in\mathcal{I}_{K,N}$}. Repeating \eqref{eq:calc1} over $n$ steps, we get
\begin{align}
    e_{k,n}
    \leq 
    C_3
    \tau^{2}
    \sum_{\ell=0}^{n-1}
    (
    1
    +
    C_4
    \tau
    )^\ell
    +
    (1+C_4\tau)^n
    e_{k,0}
    .
    \label{proof eq: application of recursive lemma}
\end{align}
By recalling $\tau = \frac{T}{KN}$, we note that, for $N\geq 1$,
\begin{align} \label{proof eq: exp bound}
    (
    1
    +
    C_4
    \tau
    )^{N-1}
    \leq
    \exp
    (C_4TK^{-1}).
\end{align}
Applying the formula for geometric sums and \eqref{proof eq: exp bound}, we obtain  
\begin{align*}
    C_3
    \tau^{2}
    \sum_{\ell=0}^{n-1}
    (
    1
    +
    C_4
    \tau
    )^\ell
    &\leq
    C_3
    \tau^{2}
    \sum_{\ell=0}^{N-1}
    (
    1
    +
    C_4
    \tau
    )^\ell
    =
    C_3
    \tau^{2}
    \frac{
    (
    1
    +
    C_4
    \tau
    )^{N-1}
    -
    1
    }{
    (
    1
    +
    C_4
    \tau
    )
    -
    1
    }
    \\
    &\hspace{-4em}=
    C_3 C_4^{-1}
    \big(
    (
    1
    +
    C_4
    \tau
    )^{N-1}
    -
    1
    \big)
    \tau
    \leq
    C_3 C_4^{-1}
    \big(
    \exp
    (C_4TK^{-1})
    -
    1
    \big)
    \tau
    =
    c_1 \tau
    .
\end{align*}
This gives a bound for the first term of \eqref{proof eq: application of recursive lemma}. For the second term we use \eqref{eq:calc2}, \eqref{proof eq: exp bound} to get
\begin{align}
    (1+C_4\tau)^n
    e_{k,0}
    &\leq
    (1+C_4\tau)^N
    e_{k,0}
    \leq
    C_L
    \exp(C_4 TK^{-1})
    e_{k-1,N}
    =
    c_2 e_{k-1,N}. 
    \label{proof eq: proof step in main theorem}
\end{align}
To sum up, we have
\begin{align}
\begin{split}
    e_{k,n}
    \leq
    c_1
    \tau
    +
    c_2
    e_{k-1,N}.
\end{split}
\end{align}
Repeating this procedure $k$ times we obtain
\begin{align}
\begin{split}
    e_{k,n}
    \leq
    c_1
    \tau
    \sum_{\ell=0}^{k-1}
    c_2^{\ell}
    +
    c_2^{k}
    e_{0,0}.
\end{split}
\end{align}
Since $\pi_0 = q_0$ the second term vanishes. We recall that $K$ is finite, while $N$ tends to infinity. Introducing $C = c_1\sum_{\ell=0}^{K}c_2^{\ell}$, we obtain 
\begin{align}
\begin{split} \label{proof eq: p - pi estimate}
    e_{k,n}
    \leq
    c_1
    \tau
    \sum_{\ell=0}^{K}
    c_2^{\ell}
    =
    C
    \tau.
\end{split}
\end{align}
This completes the proof of Lemma~\ref{lemma: pi convergence}.
\end{proof}

\section{Numerical experiments} \label{section: numerical experiments}
In this section we examine the strong convergence order of $\widetilde{\pi}$ numerically. We begin by reminding that the convergence in Corollary~\ref{corollary: pi-tilde convergence} holds for all $y\in \supp(\mathbb{P}_Y)$, and in this section we fix $Y = O$, defined in the statistical model \eqref{introduction: system of equations}, and let $\widetilde{q}_0 = q_0$. First, in Section~\ref{section: numerical approximation}, we detail the final approximation steps needed to perform the numerical experiments. Section~\ref{section: numerical discussion} contains a discussion of numerical design choices. In Section~\ref{sect: numerical evaluation} we describe the approximation of the left hand side in Corollary~\ref{corollary: pi-tilde convergence}. Section~\ref{section: numerical examples} contains the empirical convergence study on two one-dimensional examples. Finally, in Section~\ref{section: highdim example} we demonstrate the method on a nonlinear $10$-dimensional process and compare the performance to classical filter approximations.

\subsection{Energy-based approximation} \label{section: numerical approximation}
In the previous work \cite{bagmark_1}, the Energy-Based Deep Splitting (EBDS) method was employed on the Zakai equation, solving the filtering problem with continuous in time observations. Here we adapt the same approximation procedures. The initial step involves approximating the expectation in \eqref{eq: second minimization} using a Monte Carlo average. This consists of sampling $M$ independent pairs $(Z_{0:N}^m,Y_{0:K}^m)_{m=1}^M$ from \eqref{eq: Z EM} and \eqref{introduction: system of equations} respectively. The updated problem reads
\begin{align} 
    \label{eq: 
    third minimization}
    \begin{split}
        (\widetilde{\pi}_{k,n+1}^M
        (x,y))_{(x,y)\in \mathbb{R}^d\times \mathbb{R}^{d'\times (k+1)}}
        &=
        \mathop{\mathrm{arg\,min}}_{u\in C(\mathbb{R}^d\times \mathbb{R}^{d'\times (k+1)};\mathbb{R})}
        \frac{1}{M}
        \sum_{m=1}^M
        \Big| 
        u(Z_{N-(n+1)}^{m},Y_{0:{k}}^m) 
        - 
        G_b
        \widetilde{\pi}_{k,n}^M
        (Z_{N-n}^m,Y_{0:k}^m)
        \Big|^2,
        \\
        &\hspace{4em}
        k=0,\dots,K-1
        ,\quad
        n=0,\dots,N-1,
        \\
        \widetilde{\pi}_{0,0}^M
        (x,y_{0})
        &= 
        q_0(x)
        L(y_0,x), 
        \\
        \widetilde{\pi}_{k,0}^M
        (x,y_{0:k}) 
        &=
        \widetilde{\pi}_{k-1,N}^M
        (x,y_{0:k-1})
        L(y_k,x),
        \quad
        k=1,\dots,K.
    \end{split}
\end{align}
Approximating expectations with Monte Carlo averages is a classical technique and by the central limit theorem one can, for well behaved integrands, expect a convergence of order $\frac{1}{2}$ in $M$.

The second step is to approximate $C(\mathbb{R}^d\times \mathbb{R}^{d'\times (k+1)};\mathbb{R})$ by a neural network, more precisely, by the space $\{\mathcal{N}_\theta : \theta \in \Theta_k \}\subset C(\R^d\times \R^{d'\times (k+1)};\R)$. Here every $\theta$ in the space of parameters $\Theta_k \subset \R^{(d+d'\times (k+1)) \times J^{\mathfrak{L}} \times 1}$ defines a continuous function $\mathcal{N}_\theta$, a Fully Connected Neural Network (FCNN) with $\mathfrak{L}$ hidden layers and a width of $J$ neurons, mapping $\R^d\times \R^{d'\times (k+1)}$ to $\R$. In this work we use ReLU activation functions between each hidden layer except the final one, for which we instead use an energy-based activation described below. Other constructions with more advanced (or simpler) architectures could be of interest in future work, but here the focus lies on examining the convergence order numerically rather than on optimized architectures. The new optimization problem reads
\begin{align} 
    \label{eq: 
    fourth minimization}
    \begin{split}
        (\widetilde{\pi}_{k,n+1}^{M,\mathfrak{L}}
        (x,y))_{(x,y)\in \mathbb{R}^d\times \mathbb{R}^{d'\times (k+1)}}
        &=
        \mathop{\mathrm{arg\,min}}_{u\in \{\mathcal{N}_\theta: \theta\in\Theta_k\}}
        \frac{1}{M}
        \sum_{m=1}^M
        \Big| 
        u(Z_{N-(n+1)}^{m},Y_{0:{k}}^m)  
        -
        G_b
        \widetilde{\pi}_{k,n}^{M,\mathfrak{L}}
        (Z_{N-n}^m,Y_{0:k}^m)
        \Big|^2
        ,
        \\ 
        &\hspace{4em}
        k=0,\dots,K-1
        ,\quad 
        n=0,\dots,N-1,
        \\
        \widetilde{\pi}_{0,0}^{M,\mathfrak{L}}
        (x,y_{0}) 
        &= 
        q_0(x)
        L(y_0,x)
        ,
        \\
        \widetilde{\pi}_{k,0}^{M,\mathfrak{L}}
        (x,y_{0:k}) 
        &=
        \widetilde{\pi}_{k-1,N}^{M,\mathfrak{L}}
        (x,y_{0:k-1})
        L(y_k,x),
        \quad
        k=1,\dots,K.
    \end{split}
\end{align}
In \cite{siegel2023optimal} it is shown how well FCNNs approximate functions in Sobolev spaces. Specifically, with a fixed width $J$, it can be shown that the approximation converges with order $\mathfrak{L}^{-\gamma}$, where $J$ and $\gamma>0$ depend on the underlying dimension $d$ and the regularity of the Sobolev space. 

The energy-based activation function mentioned above consists of passing a scalar output $f_\theta(x,y_{0:k})$ through the exponential function. That is, we let the normalized conditional density be approximated, for each $\theta\in\Theta_k$ and input pair $(x,y_{0:k})\in \R^d\times \R^{d'\times(k+1)}$, by
\begin{align}
    p
    (x
    \mid
    y_{0:k}
    )
    &\approx
    \frac{
    \mathcal{N}_\theta(x,y_{0:k})
    }{
    Z_\theta(y_{0:k})
    }
\end{align}
where
\begin{align}
    \mathcal{N}_\theta(x,y_{0:k})
    &=
    \mathrm{e}^{-f_\theta(x,y_{0:k})}
    ,
    \quad
    Z_\theta(y_{0:k})
    =
    \int_{\R^d}
    \mathrm{e}^{-f_\theta(z,y_{0:k})}
    \dd z.
\end{align}
There are two main ideas behind the use of energy-based activation for probabilistic modeling. The first is that we do not have to evaluate the normalizing constant $Z_\theta$, if we are only concerned with the unnormalized density.
This speeds up training and still learns the density. The second concerns the scalar energy $f_\theta$. The idea is that it should assign low energy (resulting in a high probability) to pairs $(x,y_{0:k})$, that are more likely to occur and high energy (resulting in low probability) to pairs that are less likely to occur. This fits very well with solving \eqref{eq: fourth minimization}, where it is enough to approximate an unnormalized solution of the Fokker--Planck equation \eqref{eq: global Fokker--Planck with update} and to do the normalization in the inference stage. Trivially, this activation guarantees positive values, which is crucial when approximating probabilities. One can also construct $f_\theta$ in such a way that it guarantees that $Z_\theta$ is finite, e.g., by learning Gaussian tails \cite{bagmark_1}. Finally, we employ the time reparametrization that was done in \cite[Section 3.4]{bagmark_1}, which defines an equivalent optimization problem that is more suitable for computations. 

\subsection{Discussion on numerical approximations}
\label{section: numerical discussion}
In this subsection we collect a few remarks on numerical design choices in the proposed scheme. The first concerns the choice of the auxiliary drift function $b$ in the process $X$. The second concerns how to pass from the unnormalized surrogate $\widetilde{\pi}$ produced by the algorithm to a numerically stable approximation of the normalized filtering density, including practical strategies for approximating the corresponding normalizing constants in moderate and high dimensions.

\subsubsection{Choosing the auxiliary drift function}
In Section~\ref{Section: Method} we introduced the auxiliary process $X$ with drift function $b$. The natural candidate, which was the only choice in the original derivation \cite{Arnulf_PDE}, is to set $b = \mu$. In this case, if $\widetilde{q}_0 = q_0$, then $S$ and $X$ have the same distribution. This choice is natural from a sampling perspective, since the auxiliary process $X$ then explores the same region of the state space as the state $S$. The second choice that we consider is given by~\eqref{eq: mu-bar choice}, which eliminates the first order terms in $F_b$ and therefore allows us to prove strong convergence of order one. A third, not explored, option would be to use a drift coefficient that more efficiently explores the tails of the distribution. For illustration, consider the one-dimensional case where $S$ is a mean-reverting process such that $S_T \sim \mathcal{N}(0,1)$. If we choose an auxiliary drift $b$ with smaller magnitude, thereby weakening the mean reversion, the process $X$ will have heavier tails. Heuristically, this may allow the method to explore the tail behaviour of the desired distribution more efficiently, providing more information about these regions for a fixed number of Monte Carlo samples $M$. A systematic design and analysis of such tail-exploring drifts is beyond the scope of this work, but appears to be an interesting direction for future research.

\subsubsection{Normalization}
In Section~\ref{Section: Method} we introduced the unnormalized filtering density, whose approximation is analyzed in Section~\ref{section: error analysis}. The analysis is valid for both the normalized and unnormalized versions: given an approximation $\widetilde{\pi}_{k,0}^{M,\mathfrak{L}}(x, Y_{0:k}^m)$ of the unnormalized density, an approximation of the corresponding normalized filtering density is obtained by dividing by an approximation of its normalizing constant.

We emphasize that the unnormalized density is also of practical interest. In particular, if one is primarily interested in generating samples from the filtering density, it is often sufficient to know the density up to a multiplicative constant. Since our surrogate $\widetilde{\pi}$ is differentiable in $x$, it can be used directly as a potential in gradient-based Markov chain Monte Carlo methods. For example, Hamiltonian Monte Carlo \cite{duane1987hybrid,brooks2011handbook} can efficiently explore the support of the density by combining Hamiltonian dynamics with an accept-reject step, without ever requiring explicit evaluation of the normalizing constant.

For numerical stability in the training procedure, we nevertheless incorporate an explicit normalization of $\widetilde{\pi}_{k,0}^{M,\mathfrak{L}}(Y_{0:k}^m)$, for $k = 0,\dots,K$, and $m = 1,\dots,M$,
before training each subsequent network. In Section~\ref{section: numerical examples} we evaluate the strong error in two one-dimensional examples, where the normalizing constants can be accurately computed by quadrature. 

In the high-dimensional setting we instead approximate the normalizing constant by importance sampling. Let $q$ be an importance distribution and write the normalizing constant as
\begin{align}
    C^{m,k}
    &=
    \int_{\R^d} 
    \widetilde{\pi}_{k,0}^{M,\mathfrak{L}}(x, Y_{0:k}^m)
    \,\dd x
    =
    \int_{\R^d} 
    \frac{
        \widetilde{\pi}_{k,0}^{M,\mathfrak{L}}(x, Y_{0:k}^m)
    }{
        q(x \mid Y_{0:k}^m)
    }
    q(x \mid Y_{0:k}^m)
    \,\dd x,
    \quad
    m = 1,\dots,M,
    \quad
    k = 0,\dots,K.
\end{align}
To avoid the curse of dimensionality, while still allowing $q$ to depend on $Y_{0:k}^m$, we construct $q$ from an Extended Kalman Filter (EKF) approximation of the filtering distribution, see, e.g., \cite{sarkka2023bayesian}. More precisely, the EKF provides a Gaussian approximation with mean and covariance $(m_k^m, P_k^m)$, and we take
\begin{align}
    q(x 
    \mid 
    Y_{0:k}^m) 
    = 
    \mathcal{N}
    \big(
    x; \,
    m_k^m,\,
    \lambda P_k^m
    \big),
\end{align}
with an inflation factor $\lambda \geq 1$ to ensure that the support of $q$ covers the main mass of $\widetilde{\pi}_{k,0}^{M,\mathfrak{L}}$. The dominant numerical cost in this construction is the inversion of the $d \times d$ covariance matrices, which is at most of order $d^3$ per observation time and therefore avoids the exponential growth associated with grid-based quadrature in high dimensions.

\subsection{Evaluation} \label{sect: numerical evaluation}
The objective of Section~\ref{section: numerical experiments} is to numerically examine the convergence order of the approximation $\widetilde{\pi}$. To do this, we need to approximate the left hand side of \eqref{eq: corllary: pi-tilde convergence}. Subsection~\ref{section: numerical approximation} introduces the approximation $\widetilde{\pi}_{k,n}^{M,\mathfrak{L}}$ of $\widetilde{\pi}_{k,n}$. Now we measure the error between $p_{k}(t_{k,n})$ and $\widetilde{\pi}_{k,n}^{M,\mathfrak{L}}$ in $L^2(\Omega;L^\infty(\R^d;\R))$ for fixed {$(k,n)\in\mathcal{I}_{K,N}$}. In the numerical experiments we thus measure the error in the weaker norm $L^2(\Omega;L^\infty(\mathbb{R}^d;\mathbb{R}))$ rather than in $L^\infty(\supp(\mathbb{P}_Y);L^\infty(\mathbb{R}^d;\mathbb{R}))$. The latter would require a numerical approximation of a supremum over the full support of the observation process; in our examples in Section~\ref{section: numerical examples} we have $\supp(\mathbb{P}_Y) = \mathbb{R}^{10}$, which makes such a computation prohibitively expensive and unstable. By contrast, the $L^2$-norm admits a straightforward Monte Carlo approximation which is much more accurate. By the triangle inequality and by estimating the $L^2$-norm by the $L^\infty$-norm, we have 
\begin{align}
\begin{split} \label{eq: empirical error bound}
    &\big\|
    p_{k}(t_{k,n})
    - 
    \widetilde{\pi}_{k,n}^{M,\mathfrak{L}}
    \big\|_{L^2(\Omega;L^\infty(\R^d;\R))}
    \\
    &\quad
    \leq
    \big\|
    p_{k}(t_{k,n})
    - 
    \widetilde{\pi}_{k,n}
    \big\|_{L^\infty(\supp(\mathbb{P}_Y);L^\infty(\R^d;\R))}
    +
    \big\|
    \widetilde{\pi}_{k,n}
    - 
    \widetilde{\pi}_{k,n}^{M,\mathfrak{L}}
    \big\|_{L^\infty(\supp(\mathbb{P}_Y);L^\infty(\R^d;\R))}.
\end{split}
\end{align}
We see that the first term on the right hand side of \eqref{eq: empirical error bound} is the left hand side in Corollary~\ref{corollary: pi-tilde convergence}. The second term is the error that occurs due to the statistical approximation depending on $M$ and the approximation error by the neural network depending on $\mathfrak{L}$. In Subsection~\ref{section: numerical examples} we set $M$ and $\mathfrak{L}$ sufficiently large such that the first term on the right hand side of \eqref{eq: empirical error bound} is the dominating error. 

We need further approximations to evaluate the left hand side of \eqref{eq: empirical error bound}, which is, for {$(k,n)\in\mathcal{I}_{K,N}$}, defined by
\begin{align}
    \big\|
    p_{k}(t_{k,n})
    -
    \widetilde{\pi}_{k,n}^{M,\mathfrak{L}}
    \big\|_{
    L^2(\Omega;L^\infty(\R^d;\R))
    }
    &=
    \E
    \Big[
    \big\|
    p_{k}(t_{k,n},Y_{0:k})
    -
    \widetilde{\pi}_{k,n}^{M,\mathfrak{L}}
    (Y_{0:k})
    \big\|_{
    L^\infty(\R^d;\R)
    }^2
    \Big]^{\frac{1}{2}}
    \\ 
    \label{eq: true L2Linf error}
    &=
    \E
    \Big[
    \sup_{x\in\R^d}
    \big|
    p_{k}(t_{k,n},x,Y_{0:k})
    -
    \widetilde{\pi}_{k,n}^{M,\mathfrak{L}}
    (x,Y_{0:k})
    \big|^2
    \Big]^{\frac{1}{2}}.
\end{align}
To evaluate \eqref{eq: true L2Linf error}, we first approximate the expectation with $M_{\text{e}}$ Monte Carlo samples from \eqref{introduction: system of equations}. Second, since we are only evaluating this in $d=1$, we approximate the supremum by taking the supremum over a uniform grid $B$ in a bounded domain of $\R$, where the solutions have most of their mass. Combining the approximations, we get 
\begin{align}
\begin{split}
\label{eq: approx L2Linf error}
    &
    \E
    \Big[
    \sup_{x\in\R^d}
    \big|
    p_{k}(t_{k,n},x,Y_{0:k})
    -
    \widetilde{\pi}_{k,n}^{M,\mathfrak{L}}
    (x,Y_{0:k})
    \big|^2
    \Big]^{\frac{1}{2}}
    \\
    &\hspace{2em}\approx
    \Big(
    \frac{1}{M_{\text{e}}}
    \sum_{m_\text{e}=1}^{M_{\text{e}}}
    \sup_{x\in B}
    \big|
    p_{k}(t_{k,n},x,Y_{0:k}^{m_{\text{e}}})
    -
    \widetilde{\pi}_{k,n}^{M,\mathfrak{L}}
    (x,Y_{0:k}^{m_{\text{e}}})
    \big|^2
    \Big)^{\frac{1}{2}}.
\end{split}
\end{align}
The approximation error becomes arbitrarily small by choosing $|B|$ and $M_{\text{e}}$ large enough.

\subsection{Numerical convergence} \label{section: numerical examples}
In this section we empirically examine the convergence order of the EBDS method applied to two one-dimensional examples. The first one, which we refer to as the tanh process, has the tanh function as drift coefficient and satisfies the assumptions in Section~\ref{sec:setting}. The other is a bistable process, characterized by two modes in its corresponding probability distribution, and does not satisfy the assumptions. In both examples, we have $q_0 = \mathcal{N}(0,1)$, $\sigma(x) = 1$, Gaussian measurements $Y_k\sim  \mathcal{N}(h(S_{t_{k,0}}),R)$ with measurement function $h(x) = x$ and variance $R=1$. It is worth mentioning that this corresponds to a rather low signal-to-noise ratio, which some would argue is a harder problem. The tanh process has the drift coefficient $\mu(x) = \tanh(x)$, while the bistable process has a drift coefficient
$\mu(x) = \frac{2}{5}(5x-x^3)$. In both cases, we use $K=20$ with equidistant measurements with a final time of $T=2$. This decision was made based on the observation that the $L^2(L^\infty)$-error reaches a relatively stable value before $t=2$ for both examples. Furthermore, we utilize two different choices of $b$. In the first example we fulfill the assumptions of Corollary~\ref{corollary: pi-tilde convergence} by picking $b$ as in \eqref{eq: mu-bar choice}, and in the second example we use $b=\mu$.

To evaluate \eqref{eq: approx L2Linf error}, we need the reference solution $p_{k}(t_{k,n})$. In both examples we lack analytical solutions and instead employ a bootstrap particle filter with $10^5$ particles, with $128$ auxiliary time steps, to find a good approximation of the true solution. Finally, the constants for the numerical evaluation in the tanh process example and the bistable example are, respectively: $M_{\text{e}} = \{4000,500\}$, $J=128$, $M=10^6$, $\mathfrak{L}=3$ and $|B| = 2\,000$.

In Figure~\ref{fig: over time} we see the error \eqref{eq: approx L2Linf error} with 5 different discretizations $N=1,2,4,8,16$ over time. Each error trajectory corresponds to the average over 10 runs of the method to better illustrate the performance. It is easy to see how the error decreases by an increasing number of time steps $N$ in both examples. On the dashed lines, at time steps $(t_{k,0})_{k=0}^K$, the solutions are updated with new measurements. We note that the overall error level is substantially lower for the tanh process, and in both examples the finer discretizations lead to an earlier plateau of the error.

\begin{figure}[h]
\captionsetup{justification=raggedleft}
\centering
\begin{tikzpicture}

\definecolor{crimson2143940}{RGB}{214,39,40}
\definecolor{darkgray176}{RGB}{176,176,176}
\definecolor{darkorange25512714}{RGB}{255,127,14}
\definecolor{forestgreen4416044}{RGB}{44,160,44}
\definecolor{lightgray204}{RGB}{204,204,204}
\definecolor{mediumpurple148103189}{RGB}{148,103,189}
\definecolor{steelblue31119180}{RGB}{31,119,180}

\begin{axis}[
width=0.45*6.028in,
height=0.45*4.754in,
legend cell align={left},
legend style={
  fill opacity=0.8,
  draw opacity=1,
  text opacity=1,
  at={(0.03,0.97)},
  anchor=north west,
  draw=none
},
legend columns=-1,
tick align=outside,
tick pos=left,
title={Tanh process},
x grid style={darkgray176},
xlabel={$t$},
xmajorgrids,
xmin=-0.1, xmax=2.1,
xtick style={color=black},
y grid style={gray},
ylabel={Average $L^2L^\infty$-error},
ymajorgrids,
ymin=-0.00250299139682389, ymax=0.107452039569865,
ytick style={color=black},
yticklabel style={/pgf/number format/.cd, fixed, precision=2},
legend to name={fig: over time legend}
]
\addplot [line width=0.35mm, mark=pentagon*, mark repeat = 2, mark size=2, darkorange25512714]
table {%
0 0
0.100000001490116 0.0400164840691465
0.200000002980232 0.0359689928160689
0.300000011920929 0.0388758507412132
0.400000005960464 0.0427566666174773
0.5 0.0482360028497986
0.600000023841858 0.0549748328500356
0.699999988079071 0.061147731482683
0.800000011920929 0.0619074798895785
0.900000035762787 0.0670118174735624
1 0.0738228386970021
1.10000002384186 0.0746724068669218
1.20000004768372 0.0787559494488502
1.29999995231628 0.0807919899011077
1.39999997615814 0.0838906332401174
1.5 0.0863218881489343
1.60000002384186 0.0871948616625974
1.70000004768372 0.0879079284193227
1.79999995231628 0.0909445657951747
1.89999997615814 0.093614218612412
2 0.09603615930332
};
\addlegendentry{1}
\addplot [line width=0.35mm, mark=x, mark repeat = 2, mark size=2.7, forestgreen4416044]
table {%
0 0
0.100000001490116 0.012389989524387
0.200000002980232 0.0131246927713778
0.300000011920929 0.0172716421725686
0.400000005960464 0.0228513079475442
0.5 0.0281113872220318
0.600000023841858 0.033396232045775
0.699999988079071 0.03841414410415
0.800000011920929 0.041618881924967
0.900000035762787 0.0470455842476393
1 0.0508308196786659
1.10000002384186 0.0554684167980548
1.20000004768372 0.058950207225192
1.29999995231628 0.0641357349166318
1.39999997615814 0.0649024339664221
1.5 0.0673426890792558
1.60000002384186 0.0680073543297285
1.70000004768372 0.0710095727488622
1.79999995231628 0.0715449304212277
1.89999997615814 0.0745824352430937
2 0.0748441411322684
};
\addlegendentry{2}
\addplot [line width=0.35mm, mark=square*, mark repeat = 2, mark size=1.8, mediumpurple148103189]
table {%
0 0
0.100000001490116 0.0084373614956722
0.200000002980232 0.0099001013013959
0.300000011920929 0.013958133637306
0.400000005960464 0.0167412353576577
0.5 0.0202124163527134
0.600000023841858 0.0245464338598089
0.699999988079071 0.0301955672626818
0.800000011920929 0.0326211267249218
0.900000035762787 0.0369264820300454
1 0.0398750393197464
1.10000002384186 0.0431919903779898
1.20000004768372 0.0454375165116846
1.29999995231628 0.0469812967070382
1.39999997615814 0.0458552012248104
1.5 0.0471272163841891
1.60000002384186 0.0487581962691479
1.70000004768372 0.0507095768186614
1.79999995231628 0.0503298736547105
1.89999997615814 0.0530941468312512
2 0.0542423302635325
};
\addlegendentry{4}
\addplot [line width=0.35mm, mark=*, mark repeat = 2, mark size=1.6, crimson2143940]
table {%
0 0
0.100000001490116 0.0070074723186392
0.200000002980232 0.0089344991506755
0.300000011920929 0.0119838077228646
0.400000005960464 0.0144556457991126
0.5 0.0162488781430538
0.600000023841858 0.0184362340905707
0.699999988079071 0.0215019455544612
0.800000011920929 0.0237902593476014
0.900000035762787 0.0276203096699175
1 0.0295260734275806
1.10000002384186 0.0285865786349653
1.20000004768372 0.0307709654933543
1.29999995231628 0.0295867367418267
1.39999997615814 0.0299102560902509
1.5 0.0315426125186471
1.60000002384186 0.0313904501123344
1.70000004768372 0.0305777073748896
1.79999995231628 0.0308087959221034
1.89999997615814 0.0310331041901254
2 0.0344021651575612
};
\addlegendentry{8}
\addplot [line width=0.35mm, mark=triangle*, mark repeat = 2, mark phase=2, mark size=1.7, steelblue31119180]
table {%
0 0
0.100000001490116 0.006410129338821
0.200000002980232 0.0086519475801655
0.300000011920929 0.0102087248126091
0.400000005960464 0.0118587417921153
0.5 0.0145155443365324
0.600000023841858 0.0161075937276209
0.699999988079071 0.0173634253451647
0.800000011920929 0.0186735492171578
0.900000035762787 0.0199078630206722
1 0.0217550384564015
1.10000002384186 0.0225550320664017
1.20000004768372 0.0219346854601747
1.29999995231628 0.0222779538435627
1.39999997615814 0.0220563169306695
1.5 0.0219450526089224
1.60000002384186 0.0225118713556371
1.70000004768372 0.0228236670657317
1.79999995231628 0.0232418127395399
1.89999997615814 0.0238641600812499
2 0.0217984442126435
};
\addlegendentry{16}
\addplot [line width=0.30mm, black, dashed]
table {%
0 -0.0250299139682389
};
\addlegendentry{Observation time}
\addplot [line width=0.10mm, black, dashed]
table {%
0 -0.0250299139682389
0 0.725636523310804
};
\addplot [line width=0.10mm, black, dashed, forget plot]
table {%
0.1 -0.0250299139682389
0.1 0.725636523310804
};
\addplot [line width=0.10mm, black, dashed, forget plot]
table {%
0.2 -0.0250299139682389
0.2 0.725636523310804
};
\addplot [line width=0.10mm, black, dashed, forget plot]
table {%
0.3 -0.0250299139682389
0.3 0.725636523310804
};
\addplot [line width=0.10mm, black, dashed, forget plot]
table {%
0.4 -0.0250299139682389
0.4 0.725636523310804
};
\addplot [line width=0.10mm, black, dashed, forget plot]
table {%
0.5 -0.0250299139682389
0.5 0.725636523310804
};
\addplot [line width=0.10mm, black, dashed, forget plot]
table {%
0.6 -0.0250299139682389
0.6 0.725636523310804
};
\addplot [line width=0.10mm, black, dashed, forget plot]
table {%
0.7 -0.0250299139682389
0.7 0.725636523310804
};
\addplot [line width=0.10mm, black, dashed, forget plot]
table {%
0.8 -0.0250299139682389
0.8 0.725636523310804
};
\addplot [line width=0.10mm, black, dashed, forget plot]
table {%
0.9 -0.0250299139682389
0.9 0.725636523310804
};
\addplot [line width=0.10mm, black, dashed, forget plot]
table {%
1 -0.0250299139682389
1 0.725636523310804
};
\addplot [line width=0.10mm, black, dashed, forget plot]
table {%
1.1 -0.0250299139682389
1.1 0.725636523310804
};
\addplot [line width=0.10mm, black, dashed, forget plot]
table {%
1.2 -0.0250299139682389
1.2 0.725636523310804
};
\addplot [line width=0.10mm, black, dashed, forget plot]
table {%
1.3 -0.0250299139682389
1.3 0.725636523310804
};
\addplot [line width=0.10mm, black, dashed, forget plot]
table {%
1.4 -0.0250299139682389
1.4 0.725636523310804
};
\addplot [line width=0.10mm, black, dashed, forget plot]
table {%
1.5 -0.0250299139682389
1.5 0.725636523310804
};
\addplot [line width=0.10mm, black, dashed, forget plot]
table {%
1.6 -0.0250299139682389
1.6 0.725636523310804
};
\addplot [line width=0.10mm, black, dashed, forget plot]
table {%
1.7 -0.0250299139682389
1.7 0.725636523310804
};
\addplot [line width=0.10mm, black, dashed, forget plot]
table {%
1.8 -0.0250299139682389
1.8 0.725636523310804
};
\addplot [line width=0.10mm, black, dashed, forget plot]
table {%
1.9 -0.0250299139682389
1.9 0.725636523310804
};
\addplot [line width=0.10mm, black, dashed, forget plot]
table {%
2 -0.0250299139682389
2 0.725636523310804
};
\end{axis}

\end{tikzpicture}
\captionsetup{margin=90pt}
\vspace{0pt}
\centering
\input{include/figures/L2Linf_bimodal}
\captionsetup{margin=80pt}
\vspace{0pt}
\begin{tikzpicture}
\node at (-6.9,-0.4) {\ref*{fig: over time legend}};
\node at (-11.4, -0.4) {$N = $};
\end{tikzpicture}
\vspace{0pt}
\captionsetup{justification = justified}
\caption{The figure illustrates the $L^2(\Omega;L^\infty(\R^d;\R))$-error over time for five different discretizations averaged over 10 instances. To the left we see the error for the tanh process and to the right we see the error for the example with the bistable process. 
}
\label{fig: over time}
\end{figure}

Looking at the final time $T=2$ for the different discretizations, we see the convergence in Figure~\ref{fig: conv plot}. In the figure, we see 10 instances of the performed algorithm, obtaining 10 slightly different approximations due to stochastic gradient descent, illustrated in red. The figure has a logarithmic scale and shows that the average error, illustrated in blue, decreases with order slightly less than $1$ for the tanh process and with about order $\frac{1}{2}$ for the bistable example compared to the reference lines of $O(N^{-1})$ and $O(N^{-\frac{1}{2}})$, where $N=\frac{T}{K\tau}$. 

\begin{figure}[h!]
\captionsetup{justification=raggedleft}
\centering
\begin{tikzpicture}

\definecolor{darkgray176}{RGB}{176,176,176}
\definecolor{lightgray204}{RGB}{204,204,204}

\begin{axis}[
width=0.45*6.028in,
height=0.45*4.754in,
legend cell align={left},
legend style={fill opacity=0.8, draw opacity=1, text opacity=1, draw=none},
legend columns=-1,
log basis x={10},
log basis y={10},
tick align=outside,
tick pos=left,
title={Tanh process},
x grid style={darkgray176},
xlabel={$N$},
xmajorgrids,
xmin=0.840896415253715, xmax=22.0,
xmode=log,
xtick style={color=black},
y grid style={darkgray176},
ylabel={$L^2L^\infty$-error},
ymajorgrids,
ymin=0.01394200050408815, ymax=0.60450208646728,
ymode=log,
ytick style={color=black},
legend to name={fig: conv_legend}
]
\addplot [draw=red, fill=red, mark=x, mark size=2.2, only marks]
table{%
x  y
1 0.0952
1 0.0972
1 0.1064
1 0.0925
1 0.0887
1 0.0894
1 0.0936
1 0.1045
1 0.0973
1 0.1021
2 0.0724
2 0.0737
2 0.0730
2 0.0803
2 0.0748
2 0.0767
2 0.0728
2 0.0766
2 0.0738
2 0.0738
4 0.0568
4 0.0488
4 0.0622
4 0.0525
4 0.0509
4 0.0548
4 0.0461
4 0.0606
4 0.0507
4 0.0596
8 0.0342
8 0.0298
8 0.0316
8 0.0361
8 0.0403
8 0.0350
8 0.0407
8 0.0317
8 0.0331
8 0.0358
16 0.0185
16 0.0236
16 0.0226
16 0.0211
16 0.0230
16 0.0173
16 0.0235
16 0.0249
16 0.0241
16 0.0233
};
\addlegendentry{Instances\,\,}
\addplot [draw=blue, fill=blue, mark=*, mark size=2.5, only marks]
table{%
x  y
1 0.096
2 0.0748
4 0.0542
8 0.0344
16 0.0217
};
\addlegendentry{Average\,\,}
\addplot [semithick, black]
table {%
0.5 1
32 0.015625
};
\addlegendentry{$O(N^{-1})$\,\,}
\addplot [dashed, black]
table {%
1 0.88800295
};
\addlegendentry{$O(N^{-1/2})$}

\end{axis}

\end{tikzpicture}
\captionsetup{margin=90pt}
\vspace{0pt}
\centering
\begin{tikzpicture}

\definecolor{darkgray176}{RGB}{176,176,176}
\definecolor{lightgray204}{RGB}{204,204,204}

\begin{axis}[
width=0.45*6.028in,
height=0.45*4.754in,
legend cell align={left},
legend style={fill opacity=0.8, draw opacity=1, text opacity=1, draw=gray},
legend columns=-1,
log basis x={10},
log basis y={10},
tick align=outside,
tick pos=left,
title={Bistable process},
x grid style={darkgray176},
xlabel={$N$},
xmajorgrids,
xmin=0.840896415253715, xmax=22.0,
xmode=log,
xtick style={color=black},
y grid style={darkgray176},
ymajorgrids,
ymin=0.0394200050408815,
ymax=1.20450208646728,
ymode=log,
ytick style={color=black},
legend to name={fig: conv_legend_2 not used}
]
\addplot [draw=red, fill=red, mark=x, mark size=2.2, only marks]
table{%
x  y
1 0.68015605
2 0.58483016
2 0.5995205
4 0.39920387
8 0.15911894
8 0.20911698
16 0.21394326
16 0.21356715
16 0.18568146
8 0.15944713
4 0.3611002
4 0.38145736
2 0.58652073
1 0.68598765
2 0.56799144
1 0.7067415
1 0.6765037
4 0.27458292
16 0.14544177
8 0.18270564
8 0.23370676
16 0.15029857
4 0.34446543
1 0.7212925
2 0.5944799
4 0.35696125
4 0.33615968
2 0.6081116
1 0.6855605
16 0.15808727
16 0.1740551
8 0.1876449
8 0.17915873
8 0.16904405
16 0.1642633
1 0.669766
2 0.6096589
2 0.6113653
4 0.36368808
4 0.37413302
1 0.6851226
2 0.60184526
8 0.21575952
16 0.18360466
16 0.15501434
8 0.20330502
2 0.58648115
1 0.688107
1 0.70802975
4 0.36158735
};
\addlegendentry{Data}
\addplot [draw=blue, fill=blue, mark=*, mark size=2.5, only marks]
table{%
x  y
1 0.690726725
2 0.595080494
4 0.355333916
8 0.189900767
16 0.174395688
};
\addlegendentry{Average}
\addplot [dashed, black]
table {%
0.5 1.4142
1 1.0
16 0.25
32 0.1768
};
\addlegendentry{$O(N^{-1/2})$}
\end{axis}
\end{tikzpicture}
\captionsetup{margin=80pt}
\vspace{0pt}
\begin{tikzpicture}
\node at (-6.9,-0.4) {\ref*{fig: conv_legend}};
\end{tikzpicture}
\vspace{-2pt}
\captionsetup{justification = justified}
\caption{The figure presents the convergence for the numerical scheme for 10 individual instances of the scheme in red, their average in blue, and in black we see reference lines of order 1 and $\frac{1}{2}$, respectively. To the left we have the errors corresponding to the tanh process and to the right the example with the bistable process. 
}
\label{fig: conv plot}
\end{figure}

This indicates that the assumptions in Section~\ref{sec:setting} are crucial for the numerical convergence order and that order $\frac{1}{2}$ might be possible under weaker assumptions. 

For transparency, we also remark that if we let $M$ and $\mathfrak{L}$ stay constant and keep increasing $N$, the error will stop to decrease after some threshold as the statistical error and neural network approximation error starts to dominate. To examine the convergence we set a sufficiently large sample size $M=10^6$ and layer depth $\mathfrak{L}=3$ for all the discretizations.

\subsection{High-dimensional demonstration}
\label{section: highdim example}
In our final example we consider a high-dimensional version of the tanh process of Section~\ref{section: numerical examples}. More precisely, we let $C$ be a $d\times d$-matrix, with entries $C_{i,j}$, $i,j\in \{1,\dots,d\}$, uniformly distributed in $[-0.5,0.5]$, and let the drift be defined by $\mu_i(x) = \sum_{j=1}^d C_{i,j}\tanh(x_j)$ for $i=1,\dots,d$. Similarly, we let the diffusion coefficient be constant $\sigma(x) = \Sigma$, with a $d\times d$-matrix $\Sigma$, with entries $\Sigma_{i,j}$, $i,j\in \{1,\dots,d\}$, uniformly distributed in $[-0.5,0.5]$. We let $\widetilde{q}_0 = q_0 = \mathcal{N}(0,I)$, and define the observation process by $Y_k\sim \mathcal{N}(h(S_{t_{k,0}}),R)$ with $h(x) = x$ and $R=I$. In this example we let $T=1$, $K=10$ and choose $b=\mu$. Furthermore, in this high-dimensional framework with increasingly small probability values we need to work in log-scale. Hence, we employ the logarithmic version of the method, defined in \cite{baagmark2025high}. To train the method we use $M=10^7$, $J=512$, $\mathfrak{L} = 3$. 

In this high-dimensional nonlinear setting we lack obvious candidates for reference solutions. Instead we consider two metrics, where we do not need a reference solution, but where it suffices to use samples of $S$. For $k=0,\dots,K$, given an approximate mean $\widehat{m}_k$, and an approximate density $\widehat{p}_k$, we consider a Mean Absolute Error (MAE) and a Negative Log-Likelihood (NLL), which are defined, and approximated, by
\begin{align}
    \text{MAE}(\widehat{m}_k)
    &=
    \mathbb{E}
    \Big[
    \big\|
    S_{t_k}
    -
    \widehat{m}_k
    \big\|
    \Big]
    \approx
    \frac{1}{M_{\text{e}}}
    \sum_{m_{\text{e}}=1}^{M_{\text{e}}}
    \big\|
    S_{t_k}^{(m_{\text{e}})}
    -
    m_k^{(m_{\text{e}})}
    \big\|,
    \\
    \text{NLL}
    (\widehat{p}_k)
    &=
    \mathbb{E}
    \Big[
        -
        \log\big(
            \widehat{p}_k
            (S_{t_k}
            \mid 
            Y_{0:k})
        \big)
    \Big]
    \approx
    -
    \frac{1}{M_{\text{e}}}
    \sum_{m_{\text{e}}=1}^{M_{\text{e}}}
    \log\Big(
        \widehat{p}_k
        \big(
        S_{t_k}^{(m_{\text{e}})}
        \mid 
        Y_{0:k}^{(m_{\text{e}})}
        \big)
    \Big).
\end{align} 
By comparing our method to classical filters, such as an Ensemble Kalman Filter (EnKF) \cite{burgers1998analysis} and a bootstrap Particle Filter (PF) \cite{gordon1993novel}, with these metrics, we quantify how well the method is performing. Neither of these metrics are expected to yield $0$ but rather be as small as possible, while the exact filter is the true minimizer of both.

In Figure~\ref{fig: highdim} we report the errors evaluated with $M_{\text{e}} = 10^4$ samples. The figure compares a range of approximative filters, including our proposed method. For the latter we show results for $N=16$ and $N=32$ to highlight the dependence on the auxiliary time discretization within the method. We observe that the method is robust and achieves satisfactory accuracy, with a clear improvement when refining the discretization. Moreover, the Kalman-based methods perform best overall, which is consistent with the fact that the posterior is expected to be close to Gaussian, since the $\tanh$-function is approximately linear on $[-0.5,0.5]$. The particle filter, on the other hand, is asymptotically exact as the number of particles tends to infinity \cite{chopin2004central}, but already in this fully interacting $10$-dimensional example we require on the order of $10^6$ particles to outperform our proposed method in terms of the NLL metric. Finally, we note that our method performs comparatively better in NLL than in MAE, indicating that it captures the overall shape of the filtering density more accurately than its mean.

\begin{figure}[h]
\captionsetup{justification=raggedleft}
\centering
\begin{tikzpicture}

\definecolor{crimson2143940}{RGB}{214,39,40}
\definecolor{darkgray176}{RGB}{176,176,176}
\definecolor{darkorange25512714}{RGB}{255,127,14}
\definecolor{forestgreen4416044}{RGB}{44,160,44}
\definecolor{lightgray204}{RGB}{204,204,204}
\definecolor{mediumpurple148103189}{RGB}{148,103,189}
\definecolor{steelblue31119180}{RGB}{31,119,180}

\begin{axis}[
width=0.45*6.028in,
height=0.45*4.754in,
legend cell align={left},
legend style={
  fill opacity=0.8,
  draw opacity=1,
  text opacity=1,
  at={(0.03,0.97)},
  anchor=north west,
  draw=none
},
legend columns=2,
transpose legend,
tick align=outside,
tick pos=left,
title={MAE},
x grid style={darkgray176},
xlabel={$t$},
xmajorgrids,
xmin=0.05, xmax=1.05,
xtick style={color=black},
y grid style={gray},
ylabel={},
ymajorgrids,
ymin=1.4250299139682389, ymax=2.407452039569865,
ytick style={color=black},
yticklabel style={/pgf/number format/.cd, fixed, precision=2},
legend to name={fig: MAE}
]
\addplot [method LogBSDEF, mark repeat = 3, mark phase = 1]
table {%
0.100000001490116 2.22009524268721
0.200000002980232 1.89607202407703
0.300000011920929 1.73978831180975
0.400000005960464 1.65711436619112
0.5 1.60871392398623
0.600000023841858 1.58018646587679
0.699999988079071 1.55340410177432
0.800000011920929 1.54410383869056
0.899999976158142 1.5395820578139
1 1.53860484595275
};
\addlegendentry{EBDS 16}
\addplot [method BSDEF, mark repeat = 3, mark phase = 3]
table {%
0.100000001490116 2.22024845358115
0.200000002980232 1.88869822504532
0.300000011920929 1.72844479251747
0.400000005960464 1.63929250851348
0.5 1.58990014617767
0.600000023841858 1.5562543443699
0.699999988079071 1.53197217347035
0.800000011920929 1.52237484143607
0.899999976158142 1.5194358831674
1 1.52036889054667
};
\addlegendentry{EBDS 32}
\addplot [method DSF, mark repeat = 3, mark phase = 0]
table {%
0.100000001490116 2.22036347916378
0.200000002980232 1.88689299504362
0.300000011920929 1.72392597749605
0.400000005960464 1.63015729458488
0.5 1.57316338416919
0.600000023841858 1.53491467866466
0.699999988079071 1.50455257461299
0.800000011920929 1.4850923955141
0.899999976158142 1.471620827464
1 1.46001118092082
};
\addlegendentry{EKF}
\addplot [method LogDSF, mark repeat = 3, mark phase = 2]
table {%
0.100000001490116 2.22114418858859
0.200000002980232 1.88798392837371
0.300000011920929 1.725057491106
0.400000005960464 1.63136262150865
0.5 1.5744569283634
0.600000023841858 1.53602945265458
0.699999988079071 1.50582337619072
0.800000011920929 1.48632951238048
0.899999976158142 1.47253244126861
1 1.4611237714039
};
\addlegendentry{EnKF $10^4$ }

\addplot [method EKF, mark repeat = 2, mark phase = 0]
table {%
0.100000001490116 2.24636775165347
0.200000002980232 1.96221228819996
0.300000011920929 1.8327207786953
0.400000005960464 1.74943071753535
0.5 1.69272801504662
0.600000023841858 1.64889552904733
0.699999988079071 1.61318006527484
0.800000011920929 1.58604031531655
0.899999976158142 1.56621746980964
1 1.54900357232022
};
\addlegendentry{PF $10^4$ }

\addplot [method EnKF 1e6, mark repeat = 3, mark phase = 2]
table {%
0.100000001490116 2.22452579910432
0.200000002980232 1.90340237341934
0.300000011920929 1.7525976177436
0.400000005960464 1.66364934276696
0.5 1.60888303464382
0.600000023841858 1.56916425994892
0.699999988079071 1.53711158726084
0.800000011920929 1.51570551958515
0.899999976158142 1.49907279314108
1 1.48398463750005
};
\addlegendentry{PF $10^5$ }

\addplot [method PF 1e6, mark repeat = 3, mark phase = 3]
table {%
0.100000001490116 2.22050644884157
0.200000002980232 1.88987837484734
0.300000011920929 1.73005554184842
0.400000005960464 1.63810530799118
0.5 1.58134772130592
0.600000023841858 1.54294939855834
0.699999988079071 1.5122246316929
0.800000011920929 1.49209670744949
0.899999976158142 1.47806504383758
1 1.46560748018811
};
\addlegendentry{PF $10^6$ }

\end{axis}

\end{tikzpicture}
\captionsetup{margin=90pt}
\vspace{0pt}
\centering
\begin{tikzpicture}

\definecolor{crimson2143940}{RGB}{214,39,40}
\definecolor{darkgray176}{RGB}{176,176,176}
\definecolor{darkorange25512714}{RGB}{255,127,14}
\definecolor{forestgreen4416044}{RGB}{44,160,44}
\definecolor{lightgray204}{RGB}{204,204,204}
\definecolor{mediumpurple148103189}{RGB}{148,103,189}
\definecolor{steelblue31119180}{RGB}{31,119,180}

\begin{axis}[
width=0.45*6.028in,
height=0.45*4.754in,
legend cell align={left},
legend style={
  fill opacity=0.8,
  draw opacity=1,
  text opacity=1,
  at={(0.03,0.97)},
  anchor=north west,
  draw=none
},
legend columns=2,
transpose legend,
tick align=outside,
tick pos=left,
title={NLL},
x grid style={darkgray176},
xlabel={$t$},
xmajorgrids,
xmin=0.05, xmax=1.05,
xtick style={color=black},
y grid style={gray},
ylabel={},
ymajorgrids,
ymin=5.4250299139682389, ymax=12.407452039569865,
ytick style={color=black},
yticklabel style={/pgf/number format/.cd, fixed, precision=2},
legend to name={fig: NLL}
]
\addplot [method LogBSDEF, mark repeat = 3, mark phase = 1]
table {%
0.100000001490116 10.8974697553932
0.200000002980232 9.29553909397604
0.300000011920929 8.39407196715849
0.400000005960464 7.84998307156203
0.5 7.48630706269537
0.600000023841858 7.25414339861079
0.699999988079071 7.06225924755461
0.800000011920929 6.9714374350543
0.899999976158142 6.93489213205462
1 6.92696993554657
};
\addlegendentry{EBDS 16}
\addplot [method BSDEF, mark repeat = 3, mark phase = 3]
table {%
0.100000001490116 10.8980356964035
0.200000002980232 9.25470184441188
0.300000011920929 8.3238150198855
0.400000005960464 7.73825672283844
0.5 7.36297572437842
0.600000023841858 7.10543253433764
0.699999988079071 6.93504840045718
0.800000011920929 6.82808295446425
0.899999976158142 6.79361237473224
1 6.82067024288465
};
\addlegendentry{EBDS 32}
\addplot [method DSF, mark repeat = 3, mark phase = 0]
table {%
0.100000001490116 10.9006594125949
0.200000002980232 9.24481210277308
0.300000011920929 8.29535784553643
0.400000005960464 7.67031653083149
0.5 7.2361178493979
0.600000023841858 6.91886868548753
0.699999988079071 6.65996904708632
0.800000011920929 6.46897037544442
0.899999976158142 6.32200960657704
1 6.19336375519259
};
\addlegendentry{EKF}
\addplot [method LogDSF, mark repeat = 3, mark phase = 2]
table {%
0.100000001490116 11.297701773332
0.200000002980232 9.64189977022871
0.300000011920929 8.6934476306091
0.400000005960464 8.06625257185356
0.5 7.63190741275423
0.600000023841858 7.31642614776765
0.699999988079071 7.05782982093006
0.800000011920929 6.86578245977661
0.899999976158142 6.72342295143473
1 6.59255455007505
};
\addlegendentry{EnKF $10^4$ }


\addplot [method EnKF 1e6, mark repeat = 3, mark phase = 2]
table {%
0.100000001490116 11.5053933780996
0.200000002980232 10.5706174457493
0.300000011920929 9.951604071574
0.400000005960464 9.44710532145284
0.5 9.01838299736905
0.600000023841858 8.65610748799003
0.699999988079071 8.34592138702546
0.800000011920929 8.0822751677815
0.899999976158142 7.84894312566249
1 7.65060610507601
};
\addlegendentry{PF $10^5$ }

\addplot [method PF 1e6, mark repeat = 3, mark phase = 3]
table {%
0.100000001490116 11.2197743276855
0.200000002980232 9.82218908664569
0.300000011920929 8.92328032776339
0.400000005960464 8.30024531378818
0.5 7.82364728342948
0.600000023841858 7.47143683601264
0.699999988079071 7.20450842200811
0.800000011920929 6.97487383511797
0.899999976158142 6.79546568980768
1 6.64494717181028
};
\addlegendentry{PF $10^6$ }

\end{axis}

\end{tikzpicture}
\captionsetup{margin=80pt}
\vspace{0pt}
\begin{tikzpicture}
\node at (-6.9,-0.4) {\ref*{fig: MAE}};
\end{tikzpicture}
\vspace{0pt}
\captionsetup{justification = justified}
\caption{
The figure presents the two errors, mean absolute error to the left, and the negative log-likelihood to the right, for the different approximative filters when applied to the $10$-dimensional Tanh process example. 
}
\label{fig: highdim}
\end{figure}

\section*{Acknowledgements}
The work of {K.B.} and {S.L.} was supported by the Wallenberg AI, Autonomous Systems and Software Program (WASP) funded by the Knut and Alice Wallenberg Foundation. The work of {F.R.} was funded by the Swedish Electromobility Centre (SEC) and partially supported by WASP. The computations were enabled by resources provided by the National Academic Infrastructure for Supercomputing in Sweden (NAISS) at Chalmers e-Commons partially funded by the Swedish Research Council through grant agreement no. 2022-06725.

\bibliographystyle{abbrv}
\bibliography{bib}

\begin{thebibliography}{10}

\bibitem{andersson2023convergence}
K.~Andersson, A.~Andersson, and C.~W. Oosterlee.
\newblock Convergence of a robust deep {FBSDE} method for stochastic control.
\newblock {\em SIAM J. Sci. Comput.}, 45:A226--A255, 2023.

\bibitem{apte2008data}
A.~Apte, C.~K. R.~T. Jones, A.~M. Stuart, and J.~Voss.
\newblock Data assimilation: Mathematical and statistical perspectives.
\newblock {\em Int. J. Numer. Methods Fluids}, 56:1033--1046, 2008.

\bibitem{luk2024learning}
E.~Bach, R.~Baptista, E.~Luk, and A.~Stuart.
\newblock Learning optimal filters using variational inference.
\newblock {\em arXiv:2406.18066}, 2024.

\bibitem{bagmark_1}
K.~B{\aa}gmark, A.~Andersson, and S.~Larsson.
\newblock An energy-based deep splitting method for the nonlinear filtering problem.
\newblock {\em Partial Differ. Equ. Appl.}, 4, 2023.

\bibitem{baagmark2025high}
K.~B{\aa}gmark and F.~Rydin.
\newblock High-dimensional {B}ayesian filtering through deep density approximation.
\newblock {\em arXiv:2511.07261}, 2025.

\bibitem{Arnulf}
C.~Beck, S.~Becker, P.~Cheridito, A.~Jentzen, and A.~Neufeld.
\newblock Deep learning based numerical approximation algorithms for stochastic partial differential equations and high-dimensional nonlinear filtering problems.
\newblock {\em arXiv:2012.01194}, 2020.

\bibitem{Arnulf_PDE}
C.~Beck, S.~Becker, P.~Cheridito, A.~Jentzen, and A.~Neufeld.
\newblock Deep splitting method for parabolic {PDE}s.
\newblock {\em SIAM J. Sci. Comput.}, 43:A3135--A3154, 2021.

\bibitem{Beck}
C.~Beck, S.~Becker, P.~Grohs, N.~Jaafari, and A.~Jentzen.
\newblock Solving the {K}olmogorov {PDE} by means of deep learning.
\newblock {\em J. Sci. Comput.}, 88(3):73, 2021.

\bibitem{blackman1999design}
S.~S. Blackman and R.~Popoli.
\newblock {\em Design and Analysis of Modern Tracking Systems}.
\newblock Artech House Publishers, 1999.

\bibitem{brenner2007mathematical}
S.~C. Brenner and L.~R. Scott.
\newblock {\em The Mathematical Theory of Finite Element Methods}, volume~15 of {\em Texts in Applied Mathematics}.
\newblock Springer, New York, third edition, 2008.

\bibitem{brooks2011handbook}
S.~Brooks, A.~Gelman, G.~Jones, and X.-L. Meng.
\newblock {\em Handbook of Markov Chain Monte Carlo}.
\newblock CRC press, 2011.

\bibitem{burgers1998analysis}
G.~Burgers, P.~J. van Leeuwen, and G.~Evensen.
\newblock Analysis scheme in the ensemble {K}alman filter.
\newblock {\em Mon. Wea. Rev.}, 126(6):1719 -- 1724, 1998.

\bibitem{Cassola}
F.~Cassola and M.~Burlando.
\newblock Wind speed and wind energy forecast through {Kalman} filtering of numerical weather prediction model output.
\newblock {\em Appl. Energy}, 99:154--166, 2012.

\bibitem{cattiaux2002hypoelliptic}
P.~Cattiaux and L.~Mesnager.
\newblock Hypoelliptic non-homogeneous diffusions.
\newblock {\em Probab. Theory Related Fields}, 123:453--483, 2002.

\bibitem{challa2000nonlinear}
S.~Challa and Y.~Bar-Shalom.
\newblock Nonlinear filter design using {F}okker-{P}lanck-{K}olmogorov probability density evolutions.
\newblock {\em IEEE Trans. Aerosp. Electron. Syst.}, 36:309--315, 2000.

\bibitem{chopin2004central}
N.~Chopin.
\newblock Central limit theorem for sequential {M}onte {C}arlo methods and its application to {B}ayesian inference.
\newblock {\em Ann. Statist.}, 32(6):2385--2411, 2004.

\bibitem{corenflos2024particlemala}
A.~Corenflos and A.~Finke.
\newblock {Particle-MALA and Particle-mGRAD: Gradient-based {MCMC} methods for high-dimensional state-space models}.
\newblock {\em arXiv:2401.14868}, 2024.

\bibitem{corenflos2024conditioning}
A.~Corenflos, Z.~Zhao, S.~S{\"a}rkk{\"a}, J.~Sj{\"o}lund, and T.~B. Sch{\"o}n.
\newblock Conditioning diffusion models by explicit forward-backward bridging.
\newblock {\em Proc. 28th Int. Conf. Artif. Intell. Stat. (AISTATS)}, 2024.

\bibitem{Crisan_Lobbe}
D.~Crisan, A.~Lobbe, and S.~Ortiz-Latorre.
\newblock An application of the splitting-up method for the computation of a neural network representation for the solution for the filtering equations.
\newblock {\em Stoch. Partial Differ. Equ.: Anal. Comput.}, 10:1050--1081, 2022.

\bibitem{cui2005comparison}
N.~Cui, L.~Hong, and J.~R. Layne.
\newblock A comparison of nonlinear filtering approaches with an application to ground target tracking.
\newblock {\em Signal Processing}, 85:1469--1492, 2005.

\bibitem{da2014introduction}
G.~Da~Prato.
\newblock {\em Introduction to Stochastic Analysis and {M}alliavin Calculus}, volume~13 of {\em Lecture Notes. Scuola Normale Superiore di Pisa (New Series)}.
\newblock Edizioni della Normale, Pisa, third edition, 2014.

\bibitem{Date}
P.~Date and K.~Ponomareva.
\newblock Linear and non-linear filtering in mathematical finance: a review.
\newblock {\em IMA J.~Manag. Math.}, 22:195--211, 2011.

\bibitem{demissie2016nonlinear}
B.~Demissie, M.~A. Khan, and F.~Govaers.
\newblock {Nonlinear filter design using Fokker-Planck propagator in Kronecker tensor format}.
\newblock In {\em 2016 19th International Conference on Information Fusion (FUSION)}, pages 1--8. IEEE, 2016.

\bibitem{duane1987hybrid}
S.~Duane, A.~D. Kennedy, B.~J. Pendleton, and D.~Roweth.
\newblock Hybrid {M}onte {C}arlo.
\newblock {\em Physics Letters B}, 195(2):216--222, 1987.

\bibitem{Duc_Kuroda}
L.~Duc, T.~Kuroda, K.~Saito, and T.~Fujita.
\newblock Ensemble {Kalman} filter data assimilation and storm surge experiments of tropical cyclone nargis.
\newblock {\em Tellus A}, 67:25941, 2015.

\bibitem{E_2017}
W.~E, J.~Han, and A.~Jentzen.
\newblock Deep learning-based numerical methods for high-dimensional parabolic partial differential equations and backward stochastic differential equations.
\newblock {\em Commun. Math. Stat}, 5:349–380, Nov. 2017.

\bibitem{e2017deep}
W.~E and B.~Yu.
\newblock The deep {R}itz method: A deep learning-based numerical algorithm for solving variational problems.
\newblock {\em Commun. Math. Stat}, 1:1--12, 2018.

\bibitem{finke2023conditional}
A.~Finke and A.~H. Thiery.
\newblock {Conditional sequential Monte Carlo in high dimensions}.
\newblock {\em Ann. Statist.}, 51:437--463, 2023.

\bibitem{frey2022convergence}
R.~Frey and V.~K\"ock.
\newblock Convergence analysis of the deep splitting scheme: the case of partial integro-differential equations and the associated forward backward {SDE}s with jumps.
\newblock {\em SIAM J. Sci. Comput.}, 47(1):A527--A552, 2025.

\bibitem{galanis2006applications}
G.~Galanis, P.~Louka, P.~Katsafados, I.~Pytharoulis, and G.~Kallos.
\newblock {Applications of Kalman filters based on non-linear functions to numerical weather predictions}.
\newblock {\em Ann. Geophys}, 24:1--10, 2006.

\bibitem{gobet2016monte}
E.~Gobet.
\newblock {\em Monte-{C}arlo Methods and Stochastic Processes}.
\newblock CRC Press, Boca Raton, FL, 2016.

\bibitem{goodman1997mathematics}
I.~R. Goodman, R.~P.~S. Mahler, and H.~T. Nguyen.
\newblock {\em Mathematics of Data Fusion}.
\newblock Kluwer Academic Publishers Group, Dordrecht, 1997.

\bibitem{gordon1993novel}
N.~J. Gordon, D.~J. Salmond, and A.~F.~M. Smith.
\newblock Novel approach to nonlinear/non-{G}aussian {B}ayesian state estimation.
\newblock {\em IEEE Proceedings F (Radar and Signal Processing)}, 140(2):107--113, 1993.

\bibitem{Hairer_2015}
M.~Hairer, M.~Hutzenthaler, and A.~Jentzen.
\newblock Loss of regularity for {K}olmogorov equations.
\newblock {\em Ann. Probab.}, 43:468--527, 2015.

\bibitem{han2024deep}
J.~Han, W.~Hu, J.~Long, and Y.~Zhao.
\newblock Deep {P}icard iteration for high-dimensional nonlinear {PDE}s.
\newblock {\em SIAM J. Sci. Comput.}, 48(1):C1--C24, 2026.

\bibitem{hu2024score}
Z.~Hu, Z.~Zhang, G.~E. Karniadakis, and K.~Kawaguchi.
\newblock Score-based physics-informed neural networks for high-dimensional {F}okker-{P}lanck equations.
\newblock {\em SIAM J. Sci. Comput.}, 47(3):C680--C705, 2025.

\bibitem{Kalman_Bucy}
R.~E. Kalman and R.~S. Bucy.
\newblock New results in linear filtering and prediction theory.
\newblock {\em J. Basic Eng.}, 83:95--108, 1961.

\bibitem{klebaner2012introduction}
F.~C. Klebaner.
\newblock {\em Introduction to Stochastic Calculus with Applications}.
\newblock Imperial College Press, London, third edition, 2012.

\bibitem{Klenke}
A.~Klenke.
\newblock {\em Probability Theory}.
\newblock Universitext. Springer, London, second edition, 2014.

\bibitem{Kloeden_Platen}
P.~E. Kloeden and E.~Platen.
\newblock {\em Numerical Solution of Stochastic Differential Equations}, volume~23 of {\em Applications of Mathematics (New York)}.
\newblock Springer-Verlag, Berlin, 1992.

\bibitem{Kushner}
H.~J. Kushner.
\newblock On the differential equations satisfied by conditional probablitity densities of {M}arkov processes, with applications.
\newblock {\em J. Soc. Industrial Appl. Math., Series A: Control}, 2:106--119, 1964.

\bibitem{lobbe2022deep}
A.~Lobbe.
\newblock Deep learning for the {B}enes filter.
\newblock {\em Stochastic Transport in Upper Ocean Dynamics}, 10:195--210, 2023.

\bibitem{Lu_2021}
L.~Lu, P.~Jin, G.~Pang, Z.~Zhang, and G.~E. Karniadakis.
\newblock {Learning nonlinear operators via DeepONet based on the universal approximation theorem of operators}.
\newblock {\em Nat. Mach. Intell.}, 3:218–229, 2021.

\bibitem{naesseth2019high}
C.~A. Naesseth, F.~Lindsten, and T.~B. Sch{\"o}n.
\newblock {High-dimensional filtering using nested sequential Monte Carlo}.
\newblock {\em IEEE Trans. Signal Process.}, 67:4177--4188, 2019.

\bibitem{Quinn}
J.~Quinn.
\newblock A high-dimensional particle filter algorithm.
\newblock {\em arXiv:1901.10543}, 2019.

\bibitem{raissi2019physics}
M.~Raissi, P.~Perdikaris, and G.~E. Karniadakis.
\newblock Physics-informed neural networks: {A} deep learning framework for solving forward and inverse problems involving nonlinear partial differential equations.
\newblock {\em J.~Comput. Phys.}, 378:686--707, 2019.

\bibitem{Rebeschini_2015}
P.~Rebeschini and R.~van Handel.
\newblock Can local particle filters beat the curse of dimensionality?
\newblock {\em Ann. Appl. Probab.}, 25:2809--2866, 2015.

\bibitem{Rutzler}
W.~Rutzler.
\newblock Nonlinear and adaptive parameter estimation methods for tubular reactors.
\newblock {\em Ind. {E}ng. {C}hem. {R}es.}, 26:325--333, 1987.

\bibitem{sarkka2023bayesian}
S.~S\"arkk\"a and L.~Svensson.
\newblock {\em Bayesian Filtering and Smoothing}, volume~17 of {\em Institute of Mathematical Statistics Textbooks}.
\newblock Cambridge University Press, Cambridge, second edition, 2023.

\bibitem{schauer2017guided}
M.~Schauer, F.~van~der Meulen, and H.~van Zanten.
\newblock Guided proposals for simulating multi-dimensional diffusion bridges.
\newblock {\em Bernoulli}, 23(4A):2917--2950, 2017.

\bibitem{siegel2023optimal}
J.~W. Siegel.
\newblock {Optimal approximation rates for deep ReLU neural networks on Sobolev and Besov spaces}.
\newblock {\em J. Mach. Learn. Res.}, 24:1--52, 2023.

\bibitem{snyder2011particle}
C.~Snyder.
\newblock Particle filters, the “optimal” proposal and high-dimensional systems.
\newblock In {\em Proceedings of the ECMWF Seminar on Data Assimilation for atmosphere and ocean}, pages 1--10, 2011.

\bibitem{snyder2015performance}
C.~Snyder, T.~Bengtsson, and M.~Morzfeld.
\newblock Performance bounds for particle filters using the optimal proposal.
\newblock {\em Mon. Weather Rev.}, 143:4750--4761, 2015.

\bibitem{Zakai}
M.~Zakai.
\newblock On the optimal filtering of diffusion processes.
\newblock {\em Zeitschrift f{\"u}r Wahrscheinlichkeitstheorie und verwandte Gebiete}, 11:230--243, 1969.

\bibitem{zeng2013state}
Y.~Zeng and S.~Wu, editors.
\newblock {\em State-Space Models}.
\newblock Statistics and Econometrics for Finance. Springer, New York, 2013.

\bibitem{zhao2024conditional}
Z.~Zhao, Z.~Luo, J.~Sj\"{o}lund, and T.~B. Sch\"{o}n.
\newblock Conditional sampling within generative diffusion models.
\newblock {\em Philos. Trans. R. Soc. A}, 383(2299):20240329, 2025.

\end{thebibliography}

\end{document}